\theoremstyle{plain}\newtheorem{Theorem}{Theorem}[section]
\theoremstyle{plain}
\theoremstyle{plain}\newtheorem{Corollary}[Theorem]{Corollary}
\theoremstyle{plain}\newtheorem{Lemma}[Theorem]{Lemma}
\theoremstyle{plain}\newtheorem{Proposition}[Theorem]{Proposition}
\theoremstyle{definition}\newtheorem{Definition}[Theorem]{Definition}
\theoremstyle{definition}
\theoremstyle{definition}
\theoremstyle{definition}\newtheorem{Remark}[Theorem]{Remark}
\theoremstyle{definition}
\theoremstyle{plain}
    \def\OG{{\mathcal{O}G}}  \def\OGb{{\mathcal{O}Gb}}
    \def\OH{{\mathcal{O}H}}  \def\OHc{{\mathcal{O}Hc}}
    \def\OP{{\mathcal{O}P}}
    \def\OQ{{\mathcal{O}Q}}
    \def\OR{{\mathcal{O}R}}
\def\CF{{\mathcal{F}}}    
                          \def\OT{{\mathcal{O}T}}
\def\CO{{\mathcal{O}}}
\def\Aut{\mathrm{Aut}}           \def\tenk{\otimes_k}     
\def\Br{\mathrm{Br}}             \def\ten{\otimes}
\def\Defres{\mathrm{Defres}}
\def\End{\mathrm{End}}
\def\Hom{\mathrm{Hom}}
\def\Id{\mathrm{Id}}             \def\tenA{\otimes_A}
             \def\tenB{\otimes_B}
\def\Ind{\mathrm{Ind}}
           \def\tenO{\otimes_{\mathcal{O}}}
\def\op{\mathrm{op}} 
           \def\tenOP{\otimes_{\mathcal{O}P}}
         \def\tenOQ{\otimes_{\mathcal{O}Q}}
\def\Res{\mathrm{Res}}           \def\tenOR{\otimes_{\mathcal{O}R}}
\def\soc{\mathrm{soc}}           \def\tenOS{\otimes_{\mathcal{O}S}}
             \def\tenOT{\otimes_{\mathcal{O}T}}
                                 \def\tenOH{\otimes_{\mathcal{O}H}}
\def\tenOGb{\otimes_{\mathcal{O}Gb}}
\def\tenOHc{\otimes_{\mathcal{O}Hc}}
\title{On stable equivalences with endopermutation source} 
\author{Markus Linckelmann} 
\date{}
\begin{document}

\maketitle

\begin{abstract}
We show that a  bimodule between block algebras which has a 
fusion stable endopermutation module as a source and which induces
Morita equivalences between centralisers of nontrivial subgroups
of a defect group induces a stable equivalence of Morita type; this
is a converse to a theorem of Puig. 
The special case where the source is trivial has long been known
by many authors. The earliest instance for a result deducing 
a stable equivalence of Morita type from local Morita equivalences
with possibly nontrivial endopermutation source is due to Puig,
in the context of blocks with abelian defect groups with 
a Frobenius inertial quotient. The present note is motivated by
an application, due to Biland, to blocks of finite groups with
structural properties known to hold for hypothetical 
minimal counterexamples to the $Z_p^*$-Theorem.
\end{abstract}

%%%%%%%%%%%%%%%%%%%%%%%%%%%%%%%%%%%%%%%%%%%%%%%%%%%%%%%%%%%%%%%%%%%%%%%%%%%

\section{Introduction}

Let $p$ be a prime and $\CO$ a complete discrete valuation ring having 
a residue field $k$ of characteristic $p$; we allow the case $\CO=$ $k$.  
We will assume that $k$ is a splitting field for all block algebras which 
arise in this note. Following Brou\'e \cite[\S 5.A]{BroueEq}, given two 
$\CO$-algebras $A$, $B$, an $A$-$B$-bimodule $M$ and a $B$-$A$-bimodule $N$, 
we say that {\it $M$ and $N$ induce a stable equivalence of Morita type 
between $A$ and $B$} if $M$, $N$ are finitely generated projective as left 
and right modules, and if $M\tenB N\cong$ $A\oplus W$ for some projective 
$A\tenO A^\op$-module $W$ and $N\tenA M\cong$ $B\oplus$ $W'$ for some 
projective $B\tenO B^\op$-module $W'$.
By a result of Puig in \cite[7.7.4]{Puigbook} a stable  equivalence of 
Morita type between block algebras of finite groups given by a bimodule 
with endopermutation source and its dual implies that there is a canonical 
identification of the defect groups of the two blocks such that both have 
the same local strucure and such that  corresponding blocks of centralisers 
of nontrivial subgroups of that  common defect group are Morita equivalent 
via bimodules with endopermutation sources. The following theorem is a 
converse to this result. The terminology and required background information 
for this statement are collected in the next two sections, together
with further references.

\begin{Theorem} \label{endopermutationstableMorita}
Let $A$, $B$ be almost source algebras of blocks of finite group
algebras over $\CO$ having a common defect group $P$ and
the same fusion system $\CF$ on $P$.
Let $V$ be an $\CF$-stable indecomposable endopermutation $\OP$-module
with vertex $P$, viewed as an $\CO\Delta P$-module
through the canonical isomorphism $\Delta P\cong$ $P$.
Let $M$ be an indecomposable direct summand of the
$A$-$B$-bimodule
$$A\tenOP\Ind^{P\times P}_{\Delta P}(V)\tenOP B\ .$$
Suppose that $(M\tenB M^*)(\Delta P)\neq$ $\{0\}$. Then
for any nontrivial fully $\CF$-centralised subgroup $Q$ of $P$,
there is a canonical $A(\Delta Q)$-$B(\Delta Q)$-bimodule $M_Q$ satisfying
$\End_k(M_Q)\cong$ $(\End_\CO(M))(\Delta Q)$. Moreover, if for all
nontrivial fully $\CF$-centralised subgroups $Q$ of $P$ the bimodule
$M_Q$ induces a Morita equivalence between $A(\Delta Q)$ and $B(\Delta Q)$,
then $M$ and its dual $M^*$ induce a stable equivalence
of Morita type between $A$ and $B$.
\end{Theorem}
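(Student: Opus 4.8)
The plan is to prove the two bimodule isomorphisms $M\tenB M^*\cong A\oplus W$ and $M^*\tenA M\cong B\oplus W'$ with $W$, $W'$ projective; together with the one-sided projectivity of $M$ and $M^*$ this is exactly the assertion that $M$ and $M^*$ induce a stable equivalence of Morita type. First note that $\Ind^{P\times P}_{\Delta P}(V)$ is free as a left and as a right $\OP$-module, by Mackey's formula, since $P\times\{1\}$ meets every $(P\times P)$-conjugate of $\Delta P$ trivially; as $A$ and $B$ are projective as one-sided $\OP$-modules, it follows that $M$ is projective as a left $A$-module and as a right $B$-module. Hence $M^*=\Hom_\CO(M,\CO)$ may be identified with $\Hom_A(M,A)$ and with $\Hom_{B^\op}(M,B)$, so that both $(M,M^*)$ and $(M^*,M)$ are adjoint pairs in the sense of Brou\'e; write $\pi\colon M\tenB M^*\to A$ for the counit of the first adjunction and $\iota\colon A\to M\tenB M^*$ for the unit of the second, both being $A$-$A$-bimodule homomorphisms. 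Finally, since $V$ is endopermutation the $\OP$-module $V\tenO V^*$ is a permutation module, so after the usual Mackey rewriting of $\Ind(V)\tenOP B\tenOP\Ind(V^*)$ one sees that $M\tenB M^*$ is a direct summand of a $p$-permutation $A$-$A$-bimodule, hence is itself $p$-permutation; likewise $M^*\tenA M$. In particular the Brauer constructions used below are defined and additive on direct sums.

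Next I would localise at $\Delta Q$. Let $Q$ be a nontrivial fully $\CF$-centralised subgroup of $P$ and apply $(-)(\Delta Q)$. Using the canonical construction of $M_Q$ from the first part of the theorem together with the compatibility of the Brauer construction with tensor products over $A$ and $B$ (valid here because $M$ and $M^*$ are $p$-permutation and projective on the relevant sides) and with $\CO$-duality, one identifies $(M\tenB M^*)(\Delta Q)$ with $M_Q\otimes_{B(\Delta Q)}(M_Q)^*$, and one identifies $\pi(\Delta Q)$ and $\iota(\Delta Q)$ with the counit and the unit of the adjoint pairs attached to $(M_Q,(M_Q)^*)$. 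By hypothesis $M_Q$ induces a Morita equivalence between $A(\Delta Q)$ and $B(\Delta Q)$, so these counit and unit morphisms are isomorphisms; therefore $\pi(\Delta Q)$ and $\iota(\Delta Q)$ are isomorphisms and $(M\tenB M^*)(\Delta Q)\cong A(\Delta Q)$ as $A(\Delta Q)$-$A(\Delta Q)$-bimodules.

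Now the composite $\pi\circ\iota\colon A\to A$ is an endomorphism of the $A$-$A$-bimodule $A$, hence is multiplication by some central element $z\in Z(A)$. Take $Q=P$, which is nontrivial and fully $\CF$-centralised provided $P\neq\{1\}$ (the case $P=\{1\}$ is immediate, since then $A$ and $B$ are matrix algebras over $\CO$ and $M$ is the unique indecomposable $A$-$B$-bimodule). By the previous paragraph $\pi(\Delta P)$ and $\iota(\Delta P)$ are isomorphisms, so the image of $z$ under the restriction of $\Br_{\Delta P}$ to $Z(A)$ is a unit in $Z(A(\Delta P))$. Since $A$ is an indecomposable algebra, $Z(A)$ is local; and the restriction of $\Br_{\Delta P}$ to $Z(A)$ is a ring homomorphism into $Z(A(\Delta P))$ which factors through reduction modulo $\rad(\CO)$ and hence carries the maximal ideal of $Z(A)$ into that of $Z(A(\Delta P))$. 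Consequently $z$ is a unit in $Z(A)$, so $\pi\circ\iota$ is an automorphism of $A$; thus $\iota$ is a split injection, $\pi$ a split surjection, and $M\tenB M^*\cong A\oplus W$ where $W=\ker\pi$. It remains to see that $W$ is projective as an $A$-$A$-bimodule. Applying $(-)(\Delta Q)$ to this decomposition, for $Q$ nontrivial and fully $\CF$-centralised we get $A(\Delta Q)\cong A(\Delta Q)\oplus W(\Delta Q)$, whence $W(\Delta Q)=\{0\}$ by comparison of $k$-dimensions; for an arbitrary nontrivial $Q\leq P$, choosing $\varphi\in\Hom_\CF(Q,P)$ with $\varphi(Q)$ fully $\CF$-centralised and using that $A$, $B$ are almost source algebras for the common fusion system $\CF$, the Brauer construction of a $p$-permutation $A$-$A$-bimodule vanishes at $\Delta Q$ if and only if it vanishes at $\Delta\varphi(Q)$, so $W(\Delta Q)=\{0\}$ for every nontrivial $Q\leq P$, and therefore $W$ is projective. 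Running the same argument with the roles of $A$, $B$ and of $M$, $M^*$ interchanged — noting that $(M^*\tenA M)(\Delta P)\neq\{0\}$ is equivalent to the standing hypothesis $(M\tenB M^*)(\Delta P)\neq\{0\}$, and that the local correspondent of $M^*$ at $\Delta Q$ is $(M_Q)^*$, again a Morita bimodule — yields $M^*\tenA M\cong B\oplus W'$ with $W'$ a projective $B$-$B$-bimodule. This completes the proof.

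The main obstacle is the identification carried out in the second paragraph: matching $(M\tenB M^*)(\Delta Q)$ with $M_Q\otimes_{B(\Delta Q)}(M_Q)^*$ and, crucially, matching the structural morphisms $\pi(\Delta Q)$ and $\iota(\Delta Q)$ with the counit and unit attached to $M_Q$. This requires that the Brauer construction commute with tensoring over $A$ or $B$ for the $p$-permutation, one-sided projective bimodules at hand, and that the canonical construction of $M_Q$ furnished by the first part of the theorem be genuinely natural for $\CO$-duality and for these adjunction morphisms; this naturality bookkeeping is the technical heart. Once it is in place, the remaining steps — the central-unit argument showing $A$ is a direct summand of $M\tenB M^*$, and the vanishing of the Brauer constructions of the complement $W$ — are routine.
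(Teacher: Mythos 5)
Your plan hinges, as you yourself flag in the final paragraph, on identifying $(M\tenB M^*)(\Delta Q)$ with $M_Q\otimes_{B(\Delta Q)}(M_Q)^*$ and on transporting the adjunction unit/counit along this identification. That step is unjustified and, as far as the paper's machinery goes, unavailable. The justification you offer --- ``valid here because $M$ and $M^*$ are $p$-permutation and projective on the relevant sides'' --- is false: $M$ has endopermutation source $V$ with vertex $P$, and unless $V$ happens to be trivial, $M$ is \emph{not} a $p$-permutation $\CO(P\times P)$-module. The standard compatibility of the Brauer construction with $\otimes_A$ and $\otimes_B$ (of the type \cite[Theorem 9.2]{Liperm} or \cite[Proposition 2.4]{LinTR}) is a trivial-source phenomenon; it applies to the product $M\tenB M^*$ (which is $p$-permutation by Lemma \ref{UUstarsummands}, precisely because $\End_\CO(V)$ is a permutation module) but does not factor through an isomorphism $M_Q\otimes_{B(\Delta Q)}(M_Q)^*$ of ``slashed'' $M$'s, since $M_Q$ is not the Brauer quotient $M(\Delta Q)$ but Dade's deflation-restriction construct from Proposition \ref{endoinducedsourcebimod}. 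This is exactly the difficulty the paper's Theorem \ref{EndBODeltaQ} is built to handle, and note carefully that it proves only an \emph{injective} algebra homomorphism
$$\gamma : \End_{B^\op}(M)(\Delta Q)\longrightarrow \End_{B(\Delta Q)^\op}(M_Q)\ ,$$
not an isomorphism, and certainly not one respecting adjunction (co)units. With injectivity alone your central-unit argument does not go through, because you can no longer identify $\pi(\Delta P)$ and $\iota(\Delta P)$ with the unit and counit of the local Morita adjunction --- you only have an embedding into the target.

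The paper's proof avoids this entirely. First, the hypothesis $(M\tenB M^*)(\Delta P)\neq\{0\}$ is fed directly into Proposition \ref{AtoMMdualsplits}, which via \cite[Proposition 3.8]{PuigScopes} (re-proved in the Appendix through the multiplicity-module Propositions \ref{splitinjective1}--\ref{splitinjective2}) shows at once that $A$ is a direct summand of $M\tenB M^*$; no unit/counit or central-unit argument is needed. Second, writing $M\tenB M^*\cong A\oplus X$, the \emph{injective} map $\gamma$ and a dimension count force $X(\Delta Q)=\{0\}$ for every nontrivial fully $\CF$-centralised $Q$, and Lemma \ref{UUstarsummands} then says each indecomposable summand of $X$ comes with a fully $\CF$-centralised $Q$ at which its Brauer quotient is nonzero, so $X$ must be projective. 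If you want to rescue your approach you would need to prove the isomorphism and its naturality with respect to the adjunction morphisms, which is genuinely harder than the injectivity actually established in Theorem \ref{EndBODeltaQ} and is not a consequence of anything in the paper; alternatively, follow the paper and replace the unit/counit argument by Proposition \ref{AtoMMdualsplits}, and replace the claimed isomorphism by the injectivity of $\gamma$ plus dimension comparison.
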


For $V$ the trivial $\OP$-module, variations of the above result have 
been noted by many authors. For principal blocks this was first pointed 
out by Alperin. A version for finite groups with the same local structure 
appears in Brou\'e \cite[6.3]{BroueEq}, and the above theorem with $V$ 
trivial is equivalent to \cite[Theorem 3.1]{Lisplendid}. 
The first class of examples for this situation with potentially nontrivial 
$V$ goes back to work of Puig \cite{Puabelien}: it is shown in 
\cite[6.8]{Puabelien} that a block with an abelian defect group $P$ and a 
Frobenius inertial quotient is stably equivalent to its Brauer 
correspondent, using the fact that the blocks of centralisers of 
nontrivial subgroups of $P$ are nilpotent, hence Morita equivalent to the 
defect group algebra via a Morita equivalence with endopermutation source. 
The above theorem is used in the proof of Biland 
\cite[Theorem 4.1]{Biland} or \cite[Theorem 1]{BilandZp}.
For convenience, we reformulate this at the block algebra level. 

\begin{Theorem} \label{blockendopermutationstableMorita}
Let $G$, $H$ be finite groups, and let $b$, $c$ be blocks of $\OG$, $\OH$,
respectively, having a common defect group $P$. Let $i\in$ $(\OGb)^{\Delta P}$
and $j\in$ $(\OHc)^{\Delta P}$ be almost source idempotents. For any subgroup
$Q$ of $P$ denote by $e_Q$ and $f_Q$ the unique blocks of $kC_G(Q)$ and
$kC_H(Q)$, respectively, satisfying $\Br_{\Delta Q}(i)e_Q\neq$ $0$ and 
$\Br_{\Delta Q}(j)f_Q\neq$ $0$. Denote by $\hat e_Q$ and $\hat f_Q$ the
unique blocks of $\CO C_G(Q)$ and $\CO C_H(Q)$ lifting $e_Q$ and $f_Q$,
respectively.
Suppose that $i$ and $j$ determine the same fusion system $\CF$ on $P$.
Let $V$ be an $\CF$-stable indecomposable endopermutation $\OP$-module
with vertex $P$, viewed as an $\CO\Delta P$-module
through the canonical isomorphism $\Delta P\cong$ $P$.
Let $M$ be an indecomposable direct summand of the
$\OGb$-$\OHc$-bimodule
$$\OG i\tenOP\Ind^{P\times P}_{\Delta P}(V)\tenOP j\OH\ .$$
Suppose that $M$ has $\Delta P$ as a vertex as an $\CO(G\times H)$-module. 
Then for any nontrivial subgroup $Q$ of $P$, there is a canonical 
$kC_G(Q)e_Q$-$kC_H(Q)f_Q$-bimodule $M_Q$ satisfying $\End_k(M_Q)\cong$ 
$(\End_\CO(\hat e_QM\hat f_Q))(\Delta Q)$. Moreover, if for all nontrivial 
subgroups $Q$ of $P$ the bimodule $M_Q$ induces a Morita equivalence 
between $kC_G(Q)e_Q$ and $kC_H(Q)f_Q$, then $M$ and its dual $M^*$ 
induce a stable equivalence of Morita type between $\OGb$ and $\OHc$.
\end{Theorem}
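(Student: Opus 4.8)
The plan is to deduce Theorem~\ref{blockendopermutationstableMorita} from Theorem~\ref{endopermutationstableMorita} by translating between the block-algebra setup and the almost-source-algebra setup. First I would set $A=$ $i\OG i$ and $B=$ $j\OH j$; by hypothesis these are almost source algebras of the blocks $\OGb$ and $\OHc$ with common defect group $P$ and the same fusion system $\CF$ on $P$, so the hypotheses of Theorem~\ref{endopermutationstableMorita} on $A$, $B$, $P$, $\CF$ are met, as are those on $V$. The standard Morita equivalences $\OGb \simeq A$ given by the bimodules $\OG i$ and $i\OG$ (and similarly $\OHc \simeq B$ via $\OH j$, $j\OH$) let one pass back and forth: an $\OGb$-$\OHc$-bimodule summand $M$ of $\OG i \tenOP \Ind^{P\times P}_{\Delta P}(V) \tenOP j\OH$ corresponds to the $A$-$B$-bimodule summand $M' = i M j$ of $A \tenOP \Ind^{P\times P}_{\Delta P}(V) \tenOP B$, with $M \cong \OG i \tenA M' \tenB j\OH$. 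One checks that $M$ is indecomposable as an $\OGb$-$\OHc$-bimodule if and only if $M'$ is indecomposable as an $A$-$B$-bimodule, and that $M$ inducing a stable equivalence of Morita type between $\OGb$ and $\OHc$ is equivalent to $M'$ inducing one between $A$ and $B$, since stable equivalences of Morita type compose and Morita equivalences are stable equivalences of Morita type with vanishing stable quotient discrepancy.

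Next I would match up the vertex condition with the Brauer-quotient condition: $M$ has vertex $\Delta P$ as an $\CO(G\times H)$-module precisely when the $\Delta P$-Brauer quotient of $M' \tenB (M')^*$ is nonzero, i.e. $(M'\tenB (M')^*)(\Delta P) \neq \{0\}$, which is the hypothesis ``$(M\tenB M^*)(\Delta P)\neq\{0\}$'' of Theorem~\ref{endopermutationstableMorita} for $M'$. This uses that $\Delta P$ is a maximal possible vertex for such a bimodule (as $M'$ is a summand of a bimodule induced from $\Delta P$), together with the fact that a summand of an induced module has a vertex contained in the inducing subgroup and that the Brauer quotient at that maximal subgroup detects whether it is actually a vertex. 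Having verified all hypotheses, Theorem~\ref{endopermutationstableMorita} produces, for every nontrivial fully $\CF$-centralised subgroup $Q$ of $P$, a canonical $A(\Delta Q)$-$B(\Delta Q)$-bimodule $M'_Q$ with $\End_k(M'_Q)\cong (\End_\CO(M'))(\Delta Q)$.

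The remaining work is to identify $A(\Delta Q)$ with $kC_G(Q)e_Q$ and $B(\Delta Q)$ with $kC_H(Q)f_Q$ and to identify $(\End_\CO(M'))(\Delta Q)$ with $(\End_\CO(\hat e_Q M \hat f_Q))(\Delta Q)$, thereby obtaining the required bimodule $M_Q := M'_Q$ up to the stated isomorphisms, and to extend the canonical $M_Q$ from fully $\CF$-centralised $Q$ to all nontrivial $Q$ by transport along $\CF$-isomorphisms (using that $\CF$-conjugate subgroups give canonically isomorphic data, and that every subgroup is $\CF$-conjugate to a fully centralised one). The identification $A(\Delta Q) \cong kC_G(Q)e_Q$ is the standard computation of the Brauer quotient of a source algebra at $\Delta Q$, going back to Puig: $\Br_{\Delta Q}$ applied to $i\OG i$ lands in $kC_G(Q)$ and, after multiplying by the block $e_Q$ of $kC_G(Q)$ singled out by $\Br_{\Delta Q}(i)e_Q\neq 0$, yields a source algebra (or at least an algebra Morita equivalent in the appropriate canonical way) of $kC_G(Q)e_Q$; one then lifts to $\CO C_G(Q)\hat e_Q$ to match the statement's use of $\hat e_Q M \hat f_Q$. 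Finally, the Morita-equivalence hypothesis on $M_Q$ for all nontrivial $Q$ translates, via these identifications, into the hypothesis of Theorem~\ref{endopermutationstableMorita} that $M'_Q$ induces a Morita equivalence between $A(\Delta Q)$ and $B(\Delta Q)$ for all nontrivial fully $\CF$-centralised $Q$ (the fully centralised ones suffice, since Morita equivalence is preserved under the transport defining $M_Q$ on the other subgroups), and Theorem~\ref{endopermutationstableMorita} then gives that $M'$ and $(M')^*$ induce a stable equivalence of Morita type between $A$ and $B$; transporting back through the Morita equivalences $\OGb\simeq A$, $\OHc\simeq B$ shows $M$ and $M^*$ induce a stable equivalence of Morita type between $\OGb$ and $\OHc$. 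I expect the main obstacle to be the bookkeeping in the Brauer-quotient identifications $A(\Delta Q)\cong kC_G(Q)e_Q$ and the compatible identification of the endomorphism algebras $(\End_\CO(M'))(\Delta Q)\cong(\End_\CO(\hat e_Q M\hat f_Q))(\Delta Q)$, making sure the canonical choices are genuinely canonical and compatible across $\CF$-conjugacy, rather than any conceptual difficulty.
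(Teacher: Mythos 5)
Your proposal is essentially the same reduction the paper uses: cut all bimodules with the almost source idempotents $i$, $j$ to pass between $\OGb$-$\OHc$-bimodules and $A$-$B$-bimodules via the standard Morita equivalences, match the vertex hypothesis on $M$ with the Brauer-quotient hypothesis of Theorem~\ref{endopermutationstableMorita} via Proposition~\ref{blockAtoMMdualsplits}, invoke Biland's Proposition~\ref{blockendoinducedsource} (the block-level version of Proposition~\ref{endoinducedsource}) for the existence of the canonical $M_Q$, and handle non-fully-centralised $Q$ by $\CF$-conjugation. The paper compresses this into one sentence; you have unpacked it in the same spirit.

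One point of phrasing deserves a caution: you speak of ``identifying'' $A(\Delta Q)$ with $kC_G(Q)e_Q$ and $(\End_\CO(iMj))(\Delta Q)$ with $(\End_\CO(\hat e_Q M \hat f_Q))(\Delta Q)$, and set $M_Q := M'_Q$. In fact $A(\Delta Q)=\Br_{\Delta Q}(i)\,kC_G(Q)\,\Br_{\Delta Q}(i)$ is an almost source algebra of $kC_G(Q)e_Q$, hence only Morita equivalent to it (via the idempotent $\Br_{\Delta Q}(i)$), and correspondingly the two endomorphism algebras and the two bimodules $M'_Q$ and $M_Q$ are related by that Morita equivalence, not by an isomorphism. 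You do acknowledge this parenthetically, and the Morita relation is all that is actually needed to transport the ``$M_Q$ induces a Morita equivalence'' hypothesis and the ``stable equivalence of Morita type'' conclusion between the two settings, so this is a matter of wording rather than a gap in the argument.
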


The existence of canonical bimodules $M_Q$ satisfying $\End_k(M_Q)\cong$ 
$(\End_\CO(\hat e_QM\hat f_Q))(\Delta Q)$ in this Theorem is due to Biland
\cite[Theorem 3.15]{Biland}. In the statement of Theorem 
\ref{blockendopermutationstableMorita} we let $Q$ run over all nontrivial
subgroups of $P$ rather than only the fully $\CF$-centralised ones; this
makes no difference here since one can always achieve $Q$ to be fully
centralised through simultaneous conjugation in $G$ and $H$. By contrast,
in the statement of Theorem \ref{endopermutationstableMorita}, restricting
attention to fully centralised subgroups is necessary in order to ensure
that $A(\Delta Q)$ and $kC_G(Q)e_Q$ are Morita equivalent. Another technical
difference between the statements of the two theorems is that
$iMj$ will be an endopermutation $\CO\Delta Q$-module, while this is not clear
for $\hat e_QM\hat f_Q$ because indecomposable $\CO\Delta Q$-summands with
vertices strictly smaller than $\Delta Q$ might not be compatible. 
See Biland \cite[Lemma 10]{Bilandfriendly} for more details on this issue.

\begin{Remark} 
The proof of Theorem \ref{endopermutationstableMorita} becomes
significantly shorter if one assumes that $V$ has a 
fusion-stable endosplit $p$-permutation resolution. This concept is due 
to Rickard \cite{Ricksplendid}, who also showed the existence of such 
resolutions for finite abelian $p$-groups.  
As a consequence of the classification of endopermutation modules, 
endosplit $p$-permutation resolutions exist for all 
endopermutation modules over $k$ and unramified $\CO$ belonging to
the subgroup of the Dade group generated by relative syzygies. For odd
$p$ this is the entire Dade group while for $p=2$ there are some
endopermutation modules which do not have endosplit permutation
resolutions. See \cite[Theorem 14.3]{Thevtour} for more details.  
We will outline how this simplifies the proof
in the Remark \ref{endosplitRemark} below.
\end{Remark}

%%%%%%%%%%%%%%%%%%%%%%%%%%%%%%%%%%%%%%%%%%%%%%%%%%%%%%%%%%%%%%%%%%%%%%%%%%%%%
\section{Background material on blocks and almost source algebras}

Let $G$ be a finite group. For any subgroup $H$ of $G$, we denote by 
$\Delta H$ the `diagonal' subgroup $\Delta H=$ $\{(y,y)\ |\ y\in H\}$ of 
$H\times H$. Let $b$ be a block of $\OG$ and $P$ a defect group of $b$.
That is, $b$ is a primitive idempotent in $Z(\OG)$, and $P$ is a maximal
$p$-subgroup with the property that $\OP$ is isomorphic to a direct 
summand of $\OGb$ as an $\OP$-$\OP$-bimodule. As is customary, for any 
$p$-subgroup $Q$ of $G$ we denote by  
$\Br_{\Delta Q} : (\OG)^{\Delta Q}\to$ $kC_G(Q)$ the {\it Brauer 
homomorphism} induced by the linear map sending $x\in$ $C_G(Q)$ to its 
image in $kC_G(Q)$ and $x\in$ $G\setminus C_G(Q)$ to zero. The map 
$\Br_{\Delta Q}$ is a surjective algebra homomorphism. More generally, 
for $Q$ a $p$-subgroup of $G$ and $M$ an $\OG$-module, we denote by 
$M(Q)$ the $kN_G(Q)$-module obtained from applying the Brauer 
construction $\Br_Q$  to $M$. If $A$ is an interior $P$-algebra, and $Q$ 
a subgroup of $Q$, we denote by $A(\Delta P)$ the interior  
$C_P(Q)$-algebra obtained from applying the Brauer construction with 
respect to the conjugation action of $P$ on $A$; our notational 
conventions are as in \cite[\S 3]{Liperm}.  

\medskip
Following Green \cite{Greenindec}, for any indecomposable $\OGb$-module $U$, 
if $Q$ is a minimal subgroup of $G$ for which there exists an $\OQ$-module $V$ such
that $U$ is isomorphic to a direct summand of $\Ind^G_Q(V)$, then $Q$ is a
$p$-subgroup of $G$, the $\OQ$-module $V$ can
be chosen to be indecomposable, in which case $V$ is isomorphic to a direct
summand of $\Res^G_Q(U)$, and the pair $(Q,V)$ is unique up to $G$-conjugacy.
In that situation, $Q$ is called a vertex of $U$, and $V$ an $\OQ$-source
of $U$, or simply a source of $U$ of $Q$ is determined by the context. 
Moreover, if $R$ is a $p$-subgroup of $G$ such that $\Res^G_R(U)$ has an
indecomposable direct summand $W$ with vertex $R$, then there is
a vertex-source pair $(Q,V)$ of $U$ such that $R\subseteq$ $Q$ and such
that $W$ is isomorphic to a direct summand of $\Res^Q_R(V)$. 
By Higman's criterion, this happens if and only of $(\End_\CO(U))(\Delta R)\neq$
$\{0\}$. See \cite[Chapter 4]{NaTs} for an exposition of Green's theory of vertices 
and sources.

\begin{Definition} [{cf. \cite[Definition 4.3]{Liperm}}]
Let $G$ be a finite group, let $b$ be a block of $\OG$, let $P$ be a defect
group of $b$. An idempotent $i$ in $(\OGb)^{\Delta P}$ is called an
{\it almost source idempotent} if $\Br_{\Delta P}(i)\neq 0$
and for every subgroup $Q$ of $P$ there is a unique block $e_Q$
of $kC_G(Q)$ such that $\Br_{\Delta Q}(i)\in kC_G(Q)e_Q$.
The interior $P$-algebra $i\OG i$ is then called an {\it almost
source algebra of the block} $b$.
\end{Definition}

By \cite[3.5]{Puigpoint} (see also \cite[Proposition 4.1]{Liperm} for a
proof) there is a canonical Morita equivalence between the block algebra 
$\OGb$ and an almost source algebra $i\OG i$ sending an $\OGb$-module $M$ 
to the $i\OG i$-module $iM$. 
Regarding fusion systems, we tend to follow the conventions of
\cite[\S 2]{LinSol}; in particular, by a fusion system on a finite 
$p$-group we always mean a saturated fusion system (in the 
terminology used in \cite{AKO} or \cite{Craven}, for instance).  
With the notation of the previous Definition, it follows from work of 
Alperin and Brou\'e \cite{AlBr} that the choice of an almost source 
idempotent $i$ in $(\OGb)^{\Delta P}$ determines a fusion system $\CF$ 
on $P$ such that for any two subgroups $Q$, $R$ of $P$, the set 
$\Hom_\CF(Q,R)$ is the set of all group homomorphisms $\varphi : Q \to$ 
$R$ for which there is an element $x\in G$ satisfying $\varphi(u) =$ 
$xux^{-1}$ for all $u\in Q$ and satisfying $xe_Qx^{-1} =$ 
$e_{xQx^{-1}}$. See e. g. \cite[\S 2]{LinSol}, or \cite[Part IV]{AKO};
note that we use here our blanket assumption that $k$ is large enough.
Moreover, a subgroup $Q$ of $P$ is fully $\CF$-centralised if
and only if $C_P(Q)$ is a defect group of the block $e_Q$ of $kC_G(Q)$.
Given a subgroup $Q$ of $P$ it is always possible to find
a subgroup $R$ of $P$ such that $Q\cong R$ in $\CF$ and such that $R$ is
fully $\CF$-centralised. 

\begin{Proposition} [{cf. {\cite[Proposition 4.5]{Liperm}}}]
Let $G$ be a finite group, $b$ a block of $\OG$, $P$ a defect group of
$b$, and $i\in$ $(\OGb)^{\Delta P}$ an almost source idempotent of $b$.
with associated almost source algebra $A=$ $i\OG i$. If $Q$ is a fully 
$\CF$-centralised subgroup of $P$, then $\Br_{\Delta Q}(i)$ is an almost 
source idempotent of $kC_G(Q)e_Q$ with associated almost source algebra 
$A(\Delta Q)$; in particular, $kC_G(Q)e_Q$ and $A(\Delta Q)$ are Morita 
equivalent.
\end{Proposition}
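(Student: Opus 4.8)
The strategy is to describe the interior $C_P(Q)$-algebra $A(\Delta Q)$ explicitly, to recognise it as an almost source algebra of the block $kC_G(Q)e_Q$, and then to apply the canonical Morita equivalence between a block algebra and any of its almost source algebras, which was recalled above following \cite[3.5]{Puigpoint} (see also \cite[Proposition 4.1]{Liperm}). For the first point, note that $\CO G$ with the conjugation action of $\Delta P$ is a permutation $\CO\Delta P$-module, and that $i\in(\CO Gb)^{\Delta P}$ is an idempotent, hence in particular $\Delta Q$-stable; thus $A=i\CO Gi$ is a direct summand of $\CO G$ as an $\CO\Delta Q$-module, so in particular a $p$-permutation module. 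Applying the Brauer construction at $\Delta Q$ and using that it commutes with truncation by the idempotent $i$ acting on $\CO G$ by left and right multiplication (cf. \cite[\S 3]{Liperm}), together with the canonical isomorphism $(\CO G)(\Delta Q)\cong kC_G(Q)$ induced by $\Br_{\Delta Q}$, one obtains an isomorphism of interior $C_P(Q)$-algebras
$$A(\Delta Q)\ \cong\ \Br_{\Delta Q}(i)\,kC_G(Q)\,\Br_{\Delta Q}(i)\ ,$$
in which $C_P(Q)$ acts through its image in $kC_G(Q)$; that image commutes with $\Br_{\Delta Q}(i)$ because $i$, being $\Delta P$-stable, is fixed under conjugation by $C_P(Q)\le C_G(Q)$. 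Since $\Br_{\Delta Q}(i)\in kC_G(Q)e_Q$ by the definition of an almost source idempotent and $e_Q$ is a block idempotent, the right-hand side equals $\Br_{\Delta Q}(i)\,kC_G(Q)e_Q\,\Br_{\Delta Q}(i)$.

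It remains to verify that $\bar i:=\Br_{\Delta Q}(i)$ is an almost source idempotent of the block $e_Q$ of $kC_G(Q)$. It is an idempotent of $kC_G(Q)e_Q$, and it lies in $(kC_G(Q)e_Q)^{\Delta C_P(Q)}$, since $i$ is fixed under conjugation by $C_P(Q)$ and $\Br_{\Delta Q}$ is equivariant for the conjugation action of $C_P(Q)\le C_G(Q)$. Because $Q$ is fully $\CF$-centralised, $C_P(Q)$ is a defect group of $e_Q$, as recalled above, so $\bar i$ belongs to $(kC_G(Q)e_Q)^{\Delta D}$ for a defect group $D$ of $e_Q$, as the definition of an almost source idempotent requires. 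For the remaining two conditions, the key tool is the transitivity of the Brauer homomorphism along normal inclusions of $p$-subgroups. For a subgroup $R$ of $C_P(Q)$, the group $QR$ is a subgroup of $P$ containing $Q$ as a normal subgroup, one has $C_{C_G(Q)}(R)=C_G(Q)\cap C_G(R)=C_G(QR)$, and
$$\Br^{C_G(Q)}_{\Delta R}\bigl(\Br_{\Delta Q}(i)\bigr)\ =\ \Br_{\Delta(QR)}(i)$$
as elements of $kC_G(QR)$. Hence the unique block of $kC_G(QR)$ containing $\Br_{\Delta(QR)}(i)$ --- which exists because $i$ is an almost source idempotent of $b$ and $QR\le P$ --- is precisely the unique block of $kC_{C_G(Q)}(R)$ containing $\Br^{C_G(Q)}_{\Delta R}(\bar i)$. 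Taking $R=C_P(Q)$, the nonvanishing of $\Br^{C_G(Q)}_{\Delta C_P(Q)}(\bar i)=\Br_{\Delta(QC_P(Q))}(i)$ follows once one knows that $\Br_{\Delta S}(i)\neq 0$ for every subgroup $S$ of $P$; this last fact is obtained from $\Br_{\Delta P}(i)\neq 0$ by choosing a subnormal series from $S$ up to $P$ --- possible since $P$ is a $p$-group --- and applying the transitivity of the Brauer homomorphism at each step.

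Combining the two parts, $\bar i$ is an almost source idempotent of $kC_G(Q)e_Q$ with associated almost source algebra $\bar i\,kC_G(Q)e_Q\,\bar i\cong A(\Delta Q)$; the Morita equivalence between $kC_G(Q)e_Q$ and $A(\Delta Q)$ then follows from the canonical Morita equivalence between a block algebra over $k$ and any of its almost source algebras. I expect the main obstacle to be the careful handling of the two standard ingredients above: the compatibility of the Brauer construction with idempotent truncation has to be applied at the level of interior $C_P(Q)$-algebras, not merely of $k$-algebras, so that the interior structure of $A(\Delta Q)$ is matched correctly; and the transitivity of the Brauer homomorphism along $Q\trianglelefteq QR$ has to be combined with the identifications $C_{C_G(Q)}(R)=C_G(QR)$ and $QR\le P$. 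It is precisely the hypothesis that $Q$ be fully $\CF$-centralised that enters, and it does so only to guarantee that $C_P(Q)$ is a defect group of $e_Q$.
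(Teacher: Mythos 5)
This proposition is stated in the paper without proof; it is cited from \cite[Proposition 4.5]{Liperm}, so there is no internal proof to compare against. Your argument is correct, and it follows the standard route for this kind of statement: identify $A(\Delta Q)$ as $\Br_{\Delta Q}(i)\,kC_G(Q)\,\Br_{\Delta Q}(i)$ using that the Brauer construction commutes with truncation by a $\Delta Q$-fixed idempotent and that $(\OG)(\Delta Q)\cong kC_G(Q)$; verify the almost-source-idempotent conditions for $\bar i=\Br_{\Delta Q}(i)$ via the transitivity $\Br^{C_G(Q)}_{\Delta R}\circ\Br_{\Delta Q}=\Br_{\Delta(QR)}$ together with $C_{C_G(Q)}(R)=C_G(QR)$ (where you correctly use that $R\cap Q$ acts trivially on $kC_G(Q)$, so the quotient $\Delta(QR)/\Delta Q$ identifies with $\Delta R$ up to a kernel that does not affect the Brauer quotient in characteristic $p$); and invoke the characterisation of fully $\CF$-centralised subgroups by the condition that $C_P(Q)$ is a defect group of $e_Q$, plus the nonvanishing $\Br_{\Delta S}(i)\neq 0$ for all $S\leq P$ obtained from $\Br_{\Delta P}(i)\neq 0$ by descending a subnormal series. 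The final Morita equivalence then comes from the canonical one between a block algebra and any of its almost source algebras, as recalled in the paper. This is essentially the proof given in the cited reference.
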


By \cite[4.2]{Liperm}, an almost source algebra $A$ of a block with $P$
as a defect group is isomorphic to a direct summand of $A\tenOP A$ as an 
$A$-$A$-bimodule. Since $A\tenOP A\cong$ 
$(A\tenO A^\op)\ten_{\OP\ten\OP^\op}\ \OP$
this means that as an $A\tenO A^\op$-module, $A$ is relatively 
$\OP\tenO \OP^\op$-projective. Since $A$, $\OP$, and hence $A\tenO A^\op$,
$\OP\tenO \OP^\op$ are symmetric $\CO$-algebras, it follows that
$A$ is also relatively $\OP\tenO \OP^\op$-injective.
Tensoring a split map $A\to$ $A\tenOP A$ by $-\tenA U$ implies that
any $A$-module $U$ is relatively $\OP$-projective,
or equivalently, isomorphic to a direct summand of $A\tenOP U$. 
Vertices and sources of indecomposable 
$\OGb$-modules can be read off from almost  source algebras; the following 
result is a slight generalisation of \cite[6.3]{Likleinfour}. 

\begin{Proposition} \label{vertexsourcealgebra}
Let $G$ be a finite group, $b$ a block of $\OG$, $P$ a defect group
of $b$, and $i$ an almost source idempotent in $(\OGb)^{\Delta P}$. Set
$A=$ $i\OG i$. Let $U$ be an indecomposable $\OGb$-module, and let $Q$
be a minimal subgroup of $P$ such that the $A$-module $iU$ is isomorphic 
to a direct summand of $A\tenOQ V$ for some $\OQ$-module $V$. Then $Q$ 
is a vertex of $U$, and $U$ is isomorphic to a direct summand of 
$\OG i\tenOQ iU$, or equivalently, the $\OQ$-module $V$ with the property 
that $iU$ is isomorphic to a direct summand of $A\tenOQ V$ can be chosen 
to be an indecomposable direct summand of $\Res_Q(iU)$. 
\end{Proposition}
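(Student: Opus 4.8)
The plan is to exploit the canonical Morita equivalence between $\OGb$ and $A=i\OG i$ sending $U$ to $iU$, together with the fact (noted just before the statement) that every $A$-module is relatively $\OP$-projective, and to translate the relative projectivity statements back and forth across this equivalence using the bimodules $\OG i$ and $i\OG$. The first step is to fix $Q$ minimal in $P$ with $iU \mid A\tenOQ V$ for some $\OQ$-module $V$, and to observe that we may take $V$ to be an indecomposable direct summand of $\Res_Q(iU)$: this is the standard Green-theoretic argument applied to the $\OP$-interior algebra $A$, using that $A \mid A\tenOP A$ as an $A$-$A$-bimodule (so $A\tenOQ(-)$ behaves like an induction functor for the purposes of the Higman/Green formalism), exactly as in the cited \cite[6.3]{Likleinfour}. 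The Higman criterion here takes the form $(\End_\CO(iU))(\Delta Q)\neq\{0\}$, and since $\End_\CO(iU)$ is a direct summand of $\End_\CO(U)$ as an $\CO\Delta P$-algebra (because $iU\mid\Res_{\Delta P}U$ via the idempotent $i\ten i^\op$... more precisely $\End_\CO(iU)$ embeds as $(i\ten i^\op)\End_\CO(U)(i\ten i^\op)$-ish), we will want to compare $(\End_\CO(iU))(\Delta Q)$ with $(\End_\CO(U))(\Delta Q)$.

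Next I would show $Q$ is a vertex of $U$. One inclusion: since $U \cong \OG i\tenA iU$ (this is the Morita equivalence) and $iU \mid A\tenOQ V$, we get $U \mid \OG i\tenA A\tenOQ V \cong \OG i\tenOQ V \cong \Ind^G_Q(\Res_Q(\OG i)\tenOQ\cdots)$ — more cleanly, $U\mid \Ind^G_Q(V')$ for a suitable $\OQ$-module $V'$, so a vertex of $U$ is contained in a $G$-conjugate of $Q$; but $Q\subseteq P$ and $P$ is a defect group, and one checks the vertex can be taken inside $P$, hence (by minimality considerations and the standard uniqueness up to conjugacy) it is $G$-conjugate into $Q$. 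For the reverse inclusion, if $R$ is a vertex of $U$ with source $W$, then $iU\mid i\,\Ind^G_R(W) = i\OG\tenOR W$, and since $i\OG i \mid i\OG\tenOP\OG i$ we can compress this to show $iU\mid A\tenOR W'$ for some $\OR$-module $W'$; minimality of $Q$ forces $R$ to contain (a conjugate of) $Q$, hence $R$ and $Q$ are conjugate. Combining gives that $Q$ is a vertex of $U$.

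Finally, the statement $U\mid \OG i\tenOQ iU$ follows from $U\cong \OG i\tenA iU$ and $iU \mid A\tenOQ\Res_Q(iU)$ (the relative $\OQ$-projectivity just established for the $A$-module $iU$): apply $\OG i\tenA(-)$ to get $U \mid \OG i\tenOQ \Res_Q(iU)$. The equivalence with the existence of an indecomposable $V\mid\Res_Q(iU)$ having $iU\mid A\tenOQ V$ is then the Green indecomposability/Krull–Schmidt bookkeeping recorded in the first step. The main obstacle I anticipate is the bidirectional transfer of relative projectivity through the non-unital idempotent truncation — one must be careful that $A\tenOQ(-)$ really plays the role of ``$\Ind^P_Q$ followed by the Morita equivalence'' and that the idempotent $i$ does not destroy minimality of $Q$; this is where the hypotheses that $i$ is an \emph{almost source idempotent} (so $\Br_{\Delta P}(i)\neq 0$, giving $P$ as a defect group of $A$) and that $A\mid A\tenOP A$ are essential, and it is the reason the argument is only a ``slight generalisation'' of the source-algebra case rather than a triviality.
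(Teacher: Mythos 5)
Your setup, the Morita transfer, and the first half of the argument are essentially the same as the paper's: tensoring $iU\mid A\tenOQ V$ with $\OG i\tenA -$ gives $U\mid\Ind^G_Q(V)$, hence a vertex of $U$ lies in $Q$; and the Brou\'e--Higman ``abstract nonsense'' gives that $V$ can be taken to be an indecomposable direct summand of $\Res_Q(iU)$. The final step deducing $U\mid\OG i\tenOQ iU$ from $iU\mid A\tenOQ\Res_Q(iU)$ is also fine.

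The genuine problem is your reverse inclusion. You claim that, because $i\OG i\mid i\OG\tenOP\OG i$, one can ``compress'' $iU\mid i\OG\tenOR W$ (with $R$ a vertex of $U$, $W$ a source) to $iU\mid A\tenOR W'$ for some $\OR$-module $W'$. That step does not follow as stated: the relative $\OP$-projectivity of $A$ gives only $iU\mid A\tenOP\Res_P(iU)$, and $i\OG$ as a right $\OR$-module is not of the form $A\tenOR(\mathord{-})$. To make your two-sided minimality argument rigorous you would first have to run a Mackey decomposition of $\Res_P\Ind^G_R(W)$ into inductions from the subgroups $P\cap{}^gR$, then combine with $A\mid A\tenOP A$ to get $iU\mid A\tenOS(\text{--})$ for some $S=P\cap{}^gR$, and only then invoke the minimality of $Q$; none of this is what $i\OG i\mid i\OG\tenOP\OG i$ buys you directly. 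The paper avoids all of it: once you know the indecomposable $V\mid\Res_Q(iU)$ has vertex $Q$ as an $\OQ$-module (forced by minimality of $Q$, since otherwise $V\mid\Ind^Q_S(V')$ for a proper $S<Q$), the inclusion $V\mid\Res_Q(iU)\mid\Res^G_Q(U)$ and the standard Green-theoretic fact recorded in Section~2 (an indecomposable summand of $\Res^G_Q(U)$ with vertex $Q$ forces $Q$ to be contained in a vertex of $U$) yield the other inclusion in one line. Replace your compression step by that observation.
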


\begin{proof}
Note that $iU$ is an indecomposable $A$-module.
Let $Q$ be a minimal subgroup of $P$ such that $iU$ is isomorphic to a 
direct summand of $A\tenOQ V$, for some $\OQ$-module $V$. Tensoring 
with $\OG i\tenA -$ implies that $U$ is isomorphic to a direct summmand 
of $\OG i\tenOQ V$, hence of $\Ind^G_Q(V)$. Thus $Q$ contains a vertex 
of $U$. By general abstract nonsense (e.g. the equivalence of the 
statements (i) and (ii) in \cite[Theorem 6.8]{BroueHigman} applied to 
restriction and induction between $A$ and $\OQ$), $iU$ is then 
isomorphic to a direct summand of $A\tenOQ iU$, thus of $A\tenOQ V$ for 
some indecomposable direct summand $V$ of $\Res_Q(iU)$. The minimality 
of $Q$ implies that $V$ has $Q$ as a vertex. But $V$ is isomorphic to a 
direct summand of $\Res^G_Q(U)$, and hence $Q$ is contained
in a vertex of $U$. The result follows.
\end{proof}

By a result of Puig in \cite{Pulocsource}, fusion systems of blocks can 
be read off their source algebras; this is slightly extended to almost 
source algebras in \cite[5.1, 5.2]{Liperm}. 

\begin{Proposition} [{cf. \cite[Proposition 5.1]{Liperm}}] 
\label{almostsourcealgebrabimodfusion}
Let $G$ be a finite group, let $b$ be a block of $\OG$ with defect group
$P$, let $i\in (\OGb)^{\Delta P}$ be an almost source idempotent and set
$A = i\OG i$. Denote by $\CF$ the fusion system of $A$ on $P$. Let $Q$ be
a fully $\CF$-centralised subgroup of $P$ and let $\varphi : Q \rightarrow P$
be a morphism in $\CF$. Set $R = \varphi(Q)$. Denote by $e_Q$, $e_R$ the
unique blocks of $kC_G(Q)$, $kC_G(R)$ satisfying
$\Br_{\Delta Q}(i)e_Q \neq 0$ and $\Br_{\Delta R}(i)e_R \neq 0$.

\smallskip\noindent
(i) For any primitive idempotent $n$ in $(\OGb)^{\Delta R}$
satisfying $\Br_{\Delta R}(n)e_R \neq 0$ there is a primitive idempotent
$m$ in $A^{\Delta Q}$ satisfying $\Br_{\Delta Q}(m) \neq 0$
such that $m\OG \cong {_\varphi(n\OG)}$ as $\OQ$-$\OGb$-bimodules
and such that $\OG m \cong (\OG n)_\varphi$ as $\OGb$-$\OQ$-bimodules.

\smallskip\noindent
(ii) For any primitive idempotent $n$ in $A^{\Delta R}$ satisfying
$\Br_{\Delta R}(n) \neq 0$ there is a primitive idempotent $m$ in
$A^{\Delta Q}$ satisfying $\Br_{\Delta Q}(m) \neq 0$
such that $mA \cong {_\varphi(nA)}$ as $\OQ$-$A$-bimodules
and such that $Am \cong (An)_\varphi$ as $A$-$\OQ$-bimodules.
\end{Proposition}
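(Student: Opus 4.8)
The plan is to prove part~(i) directly and to obtain part~(ii) from it by restricting the bimodules of~(i) to the corner $A=i\OG i$. I shall use freely that, $\CO$ being complete local and $\OGb$ finitely generated over $\CO$, every corner $e(\OGb)^{\Delta Q}e$ at an idempotent $e\in(\OGb)^{\Delta Q}$ is semiperfect with local endomorphism rings of its indecomposable projectives; hence $A^{\Delta Q}=i(\OGb)^{\Delta Q}i$ is a direct summand of $(\OGb)^{\Delta Q}$, a primitive idempotent of $A^{\Delta Q}$ is primitive in $(\OGb)^{\Delta Q}$, and the image of a primitive idempotent of $(\OGb)^{\Delta Q}$ under the surjection $\Br_{\Delta Q} : (\OGb)^{\Delta Q}\to kC_G(Q)\Br_{\Delta Q}(b)$ is either zero or a primitive idempotent lying in a single block of $kC_G(Q)\Br_{\Delta Q}(b)$; the same holds with $R$ in place of $Q$. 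For part~(i), the description of the fusion system $\CF$ of $A$ recalled above provides an element $x\in G$ with $\varphi(u)=xux^{-1}$ for all $u\in Q$ and with $xe_Qx^{-1}=e_R$; here $xQx^{-1}=R$, and $x^{-1}bx=b$ since $b$ is central in $\OG$. The map $a\mapsto x^{-1}ax$ is therefore a ring isomorphism $(\OGb)^{\Delta R}\to(\OGb)^{\Delta Q}$ intertwining $\Br_{\Delta R}$ and $\Br_{\Delta Q}$, so setting $n'=x^{-1}nx$ gives a primitive idempotent of $(\OGb)^{\Delta Q}$ with $\Br_{\Delta Q}(n')e_Q\neq0$ (using $x^{-1}e_Rx=e_Q$), and then $\Br_{\Delta Q}(n')$ is a primitive idempotent of the block $kC_G(Q)e_Q$.

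The crucial step is to replace $n'$ by a primitive idempotent $m$ of $A^{\Delta Q}$ that is conjugate to $n'$ in $(\OGb)^{\Delta Q}$ and satisfies $\Br_{\Delta Q}(m)\neq0$. Since $Q$ is fully $\CF$-centralised, \cite[Proposition~4.5]{Liperm} (the Proposition recalled above) shows that $\bar i=\Br_{\Delta Q}(i)$ is an almost source idempotent of $kC_G(Q)e_Q$, so that $\bar i\cdot(-)$ is a Morita equivalence between $kC_G(Q)e_Q$ and $A(\Delta Q)=\bar i\,kC_G(Q)e_Q\,\bar i$. Consequently $kC_G(Q)e_Q\,\bar i$ is a progenerator as a right $kC_G(Q)e_Q$-module, so every primitive idempotent of $kC_G(Q)e_Q$ — in particular $\Br_{\Delta Q}(n')$ — is conjugate in $kC_G(Q)e_Q$ to a primitive idempotent below $\bar i$. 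It then remains to invoke the standard correspondence for local pointed groups (\cite{Puigpoint}; see \cite[\S3]{Liperm} for the conventions used here): $\Br_{\Delta Q}$ induces a bijection between the local points of $\Delta Q$ on $\OGb$ and the points of $kC_G(Q)\Br_{\Delta Q}(b)$, and this bijection is compatible with divisibility below an idempotent of $(\OGb)^{\Delta Q}$, because for an interior $\Delta Q$-algebra the Brauer quotient of a corner is the corresponding corner of the Brauer quotient. Applied with the idempotent $i$ this produces the required $m$. I expect this to be the main obstacle: it is where the progenerator property coming from the Morita equivalence must be combined with the corner-compatibility of the local-point correspondence, and it is precisely here that the hypothesis that $i$ is an \emph{almost source} idempotent, rather than an arbitrary local idempotent of $(\OGb)^{\Delta P}$, enters.

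Granting $m$, choose a unit $a$ of $(\OGb)^{\Delta Q}$ with $m=an'a^{-1}$ and set $y=ax^{-1}$, a unit of $\OGb$ with inverse $xa^{-1}$; then $m=yny^{-1}$, whence $m\OG=yn\OG$ and $\OG m=\OG ny^{-1}$. Left multiplication by $y$ is an isomorphism of right $\OGb$-modules $n\OG\to m\OG$; as $a$ is fixed under $\Delta Q$-conjugation it centralises $Q$, so this isomorphism carries the left action of $u\in Q$ on $m\OG$ to left multiplication by $xux^{-1}=\varphi(u)$ on $n\OG$, giving $m\OG\cong{_\varphi(n\OG)}$ as $\OQ$-$\OGb$-bimodules. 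Symmetrically, right multiplication by $y^{-1}$ gives $\OG m\cong(\OG n)_\varphi$ as $\OGb$-$\OQ$-bimodules. This proves part~(i).

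For part~(ii), let $n$ be a primitive idempotent of $A^{\Delta R}$ with $\Br_{\Delta R}(n)\neq0$. By the first paragraph $n$ is primitive in $(\OGb)^{\Delta R}$, and $\Br_{\Delta R}(n)=\Br_{\Delta R}(i)\Br_{\Delta R}(n)\Br_{\Delta R}(i)$ lies in $kC_G(R)e_R$ since $\Br_{\Delta R}(i)$ does; hence $\Br_{\Delta R}(n)e_R=\Br_{\Delta R}(n)\neq0$ and part~(i) applies, yielding a primitive idempotent $m$ of $A^{\Delta Q}$ with $\Br_{\Delta Q}(m)\neq0$ together with isomorphisms $m\OG\cong{_\varphi(n\OG)}$ and $\OG m\cong(\OG n)_\varphi$ realised by $y$ and $y^{-1}$ as above. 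Multiplying the first on the right by $i$, and using $m\OG i=mA$, $n\OG i=nA$ and that a right $\OGb$-linear map is right $A$-linear, gives $mA\cong{_\varphi(nA)}$ as $\OQ$-$A$-bimodules; multiplying the second on the left by $i$ gives $Am\cong(An)_\varphi$ as $A$-$\OQ$-bimodules. This completes the proof.
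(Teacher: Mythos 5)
Your proof is correct. The paper itself supplies no proof for this proposition --- it is imported via the \emph{cf.} citation from \cite[Proposition 5.1]{Liperm} --- so there is no in-text argument to compare against, but your argument is sound and reproduces the expected route. The three ingredients are exactly the right ones: (1) realising $\varphi$ by conjugation by an element $x\in G$ carrying $(Q,e_Q)$ to $(R,e_R)$, which transports $n$ to $n'=x^{-1}nx\in(\OGb)^{\Delta Q}$ with $\Br_{\Delta Q}(n')e_Q\neq 0$; (2) using the hypothesis that $Q$ is fully $\CF$-centralised to produce, inside the corner $A^{\Delta Q}=i(\OGb)^{\Delta Q}i$, an idempotent $m$ in the same local point of $Q$ as $n'$; and (3) the bookkeeping with the unit $y=ax^{-1}$, where $a\in\bigl((\OGb)^{\Delta Q}\bigr)^\times$ conjugates $n'$ to $m$ and hence centralises $Q$, so that $y^{-1}uy=\varphi(u)$ and $v\mapsto yv$, $v\mapsto vy^{-1}$ give the two bimodule isomorphisms. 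Part~(ii) then follows by multiplying by $i$, using $nA=n\OG i$ and $mA=m\OG i$ because $n,m\in A$.

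One remark on economy: in step~(2) you derive the existence of $m$ from the Morita equivalence between $kC_G(Q)e_Q$ and $A(\Delta Q)$ combined with the local pointed-group correspondence and corner-compatibility of $\Br_{\Delta Q}$. This is valid, but it is in effect a proof of \cite[Proposition 4.6(ii)]{Liperm}, which states directly that for fully $\CF$-centralised $Q$ every local point of $Q$ on $\OGb$ associated with $e_Q$ has a representative in $i\OG i$; quoting that result --- which the present paper itself invokes in the proof of Proposition \ref{Brauerpairfusion} --- would shorten the crucial step to a single line. The only other point worth flagging is minor: after conjugating $\Br_{\Delta Q}(n')$ to an idempotent $\bar m$ lying below $\bar\imath=\Br_{\Delta Q}(i)$ and lifting $\bar m$ through the surjection $A^{\Delta Q}\to A(\Delta Q)$, you should note explicitly that the lifted $m$ still satisfies $\Br_{\Delta Q}(m)\neq 0$ (it does, since $\Br_{\Delta Q}(m)=\bar m\neq 0$), and that $m$ and $n'$ are then conjugate in $(\OGb)^{\Delta Q}$ because they lie over conjugate primitive idempotents of $kC_G(Q)\Br_{\Delta Q}(b)$; you say this, and it is correct, but it merits the extra half-sentence you give it.
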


\begin{Proposition} [{cf. \cite[Proposition 5.2]{Liperm}}]
\label{almostsourcealgebrafusion}
Let $G$ be a finite group, $b$ be a block of $\OG$ with defect
group $P$, let $i$ be an almost source idempotent in $(\OGb)^{\Delta P}$
and set $A = i\OG i$. Denote by $\CF$ the fusion system of $A$ on $P$.
Let $Q$, $R$ be subgroups of $P$.

\smallskip\noindent
(i) Every indecomposable direct summand of $A$ as an
$\OQ$-$\OR$-bimodule is isomorphic to $\OQ\tenOS {_\varphi\OR}$
for some subgroup $S$ of $Q$ and some morphism $\varphi : S \to$
$R$ belonging to $\CF$.

\smallskip\noindent
(ii) If $\varphi : Q \to R$ is an isomorphism in $\CF$
such that $R$ is fully $\CF$-centralised then
${_\varphi\OR}$ is isomorphic to a direct summand of $A$
as an $\OQ$-$\OR$-bimodule.

\smallskip\noindent
In particular, $\CF$ is determined by the $\OP$-$\OP$-bimodule
structure of $A$.
\end{Proposition}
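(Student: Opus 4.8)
The plan is to prove assertions (i) and (ii) separately and then read off the final statement. The common starting point is that, because $i$ commutes with $P$ in $\OG$, for subgroups $Q$, $R$ of $P$ both $x\mapsto ix$ and $x\mapsto xi$ are $\OQ$-$\OR$-bimodule endomorphisms of $\OG$; hence $A=i\OG i$ is a direct summand of $\OG$ as an $\OQ$-$\OR$-bimodule, and in particular a $p$-permutation $\CO[Q\times R]$-module. Since the point stabilisers of the permutation $\CO[Q\times R]$-module $\OG$ are the twisted diagonal subgroups $\{(a,x^{-1}ax)\ |\ a\in Q\cap xRx^{-1}\}$ for $x\in G$, every indecomposable direct summand $N$ of $A$ as an $\OQ$-$\OR$-bimodule has trivial source and a vertex of the form $\Delta_\varphi(S):=\{(s,\varphi(s))\ |\ s\in S\}$ with $S\leq Q$ and $\varphi\colon S\to R$ the restriction of conjugation by some $g\in G$; since $\Ind^{Q\times R}_{\Delta_\varphi(S)}(\CO)$ is indecomposable ($Q\times R$ being a $p$-group), $N\cong\OQ\tenOS {_\varphi\OR}$. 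Thus for (i) it remains to prove $\varphi\in\Hom_\CF(S,R)$, which I expect to be the main obstacle.

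To handle it, write $T=\varphi(S)=gSg^{-1}\leq R$. As $N$ is a summand of $A$ with vertex $\Delta_\varphi(S)$, Higman's criterion gives $A(\Delta_\varphi(S))\neq 0$. Right multiplication by $g$ maps $A$ onto $i\OG ig$ and transforms the action of $\Delta_\varphi(S)$ into the conjugation action of $\Delta S$ on $i\OG ig\subseteq\OG$; since $i\OG ig$ is thereby a direct summand of $\OG$ as an $\CO\Delta S$-module, this yields $A(\Delta_\varphi(S))\cong(i\OG ig)(\Delta S)\subseteq(\OG)(\Delta S)=kC_G(S)$, and because every element of $(i\OG ig)^{\Delta S}$ (being in $i\OG$ and $S$-fixed) lies in $i\,C_{\OG}(S)$, this image is contained in $\Br_{\Delta S}(i)kC_G(S)\subseteq kC_G(S)e_S$. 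Left multiplication by $g$ gives, symmetrically, an embedding $A(\Delta_\varphi(S))\cong(gi\OG i)(\Delta T)\subseteq kC_G(T)e_T$, and conjugation by $g$, which carries $kC_G(S)$ onto $kC_G(T)$, identifies the first embedding with the second. Hence the non-zero space $A(\Delta_\varphi(S))$ embeds into $(ge_Sg^{-1})kC_G(T)\cap e_TkC_G(T)$; since distinct block idempotents of $kC_G(T)$ span ideals meeting in zero, $ge_Sg^{-1}=e_T$, i.e. $\varphi\in\CF$ by the definition of $\CF$. This finishes (i).

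For (ii), let $\varphi\colon Q\to R$ be an isomorphism in $\CF$ with $R$ fully $\CF$-centralised, and fix a primitive idempotent $n$ in $A^{\Delta Q}$ with $\Br_{\Delta Q}(n)\neq 0$ (possible since $\Br_{\Delta Q}(1_A)=\Br_{\Delta Q}(i)\neq 0$). I would apply Proposition \ref{almostsourcealgebrabimodfusion}(ii) to the morphism $\varphi^{-1}\colon R\to P$ of $\CF$ and to $n$; this produces a primitive idempotent $m$ in $A^{\Delta R}$ with $\Br_{\Delta R}(m)\neq 0$ and $Am\cong(An)_{\varphi^{-1}}$ as $A$-$\OR$-bimodules. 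Left multiplication by $n$ gives $nAm\cong(nAn)_{\varphi^{-1}}$ as $\OQ$-$\OR$-bimodules, with $nAm$ a direct summand of $A$ as an $\OQ$-$\OR$-bimodule; and ${_\varphi\OR}\cong(\OQ)_{\varphi^{-1}}$, so it suffices to show that $\OQ$ is a direct summand of $nAn$ as an $\OQ$-$\OQ$-bimodule. Here $(nAn)(\Delta Q)\neq 0$ since it contains $\Br_{\Delta Q}(n)$, whereas—by the description of point stabilisers above, now with $R=Q$—every indecomposable direct summand of $nAn$ has a vertex $\Delta_\tau(S')$ with $S'\leq Q$, hence of order at most $|Q|$. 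So the summand $N'$ of $nAn$ with $N'(\Delta Q)\neq 0$ has vertex $\Delta_\tau(S')$ with $|S'|=|Q|$; then $S'=Q$, the subgroup $\Delta Q$ is $(Q\times Q)$-conjugate to $\Delta_\tau(Q)$, so $\tau\in\Inn(Q)$ and $N'\cong\OQ$. This proves (ii).

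Finally, assertions (i) and (ii) together say that the isomorphism classes of the indecomposable $\OQ$-$\OR$-bimodule summands of $A$, as $Q$ and $R$ range over the subgroups of $P$, record exactly the isomorphisms $Q\to R$ in $\CF$ with $R$ fully $\CF$-centralised; and since $\CF$ is generated by such isomorphisms, their inverses, and inclusions, the $\OP$-$\OP$-bimodule structure of $A$ determines $\CF$. Apart from the block-matching argument in (i), everything reduces to the $p$-permutation structure of $A$ over $\CO[Q\times R]$, an order count on vertices, and a single application of Proposition \ref{almostsourcealgebrabimodfusion}.
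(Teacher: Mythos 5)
The paper does not prove Proposition~\ref{almostsourcealgebrafusion}; it recalls it from \cite[Proposition 5.2]{Liperm}. So there is no in-text proof to compare against. Evaluated on its own terms, your argument is essentially correct and is the expected one: restrict $A$ to an $\OQ$-$\OR$-bimodule, use that $A$ is a $p$-permutation $\CO(Q\times R)$-module (being a bimodule summand of $\OG$, whose point stabilisers are twisted diagonals) to reduce (i) to showing the twist $\varphi$ lies in $\CF$, and then verify the block-compatibility $ge_Sg^{-1}=e_T$ by pushing $A(\Delta_\varphi(S))\neq 0$ into $kC_G(S)e_S$ and $kC_G(T)e_T$ via right and left translation by $g$ and noting that orthogonal blocks of $kC_G(T)$ intersect trivially. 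This is the same circle of ideas as the proof of Proposition~\ref{Brauerpairfusion} in the paper and as the pointed-group argument in \cite{Liperm}. Your part (ii) via Proposition~\ref{almostsourcealgebrabimodfusion}(ii), reading it with $R$ fully centralised and $\varphi^{-1}\colon R\to P$, then extracting the vertex-$\Delta Q$ summand of $nAn$ from $\Br_{\Delta Q}(n)\neq 0$, is also sound.

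Two small points worth making explicit if you write this up. First, the nonvanishing $A(\Delta_\varphi(S))\neq 0$ used in (i) comes from the general fact that a $p$-permutation module has nonzero Brauer quotient at any of its vertices, not literally from Higman's criterion. Second, in the comparison of the two embeddings you should say that the image of $A(\Delta_\varphi(S))$ under left translation by $g$ lands in $kC_G(T)e_T$ (from $a=ai$) and simultaneously, via the identity $ga=g(ag)g^{-1}$, in $g\bigl(kC_G(S)e_S\bigr)g^{-1}=kC_G(T)\cdot ge_Sg^{-1}$; since these are ideals of $kC_G(T)$ cut out by central idempotents, a nonzero intersection forces $ge_Sg^{-1}=e_T$. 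You implicitly do this, but the justification that $gA$ and $Ag$ are $\CO\Delta T$- resp.\ $\CO\Delta S$-direct summands of $\OG$ for the conjugation actions, and that the resulting Brauer quotients identify with the stated subspaces, deserves a sentence.
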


\begin{Proposition} \label{Brauerpairfusion}
Let $G$ be a finite group, $b$ be a block of $kG$ with defect
group $P$, and let $i$ be an almost source idempotent in $(kGb)^{\Delta P}$.
Denote by $\CF$ the fusion system on $P$ determined by $i$.
Let $Q$, $R$ be subgroups of $P$ and denote by $e$ the unique block
of $kC_G(Q)$ satisfying $\Br_{\Delta Q}(i)e\neq$ $0$. 
Let $\varphi : Q\to$ $R$ be an injective group homomorphism such that
${_\varphi{kR}}$ is isomorphic to a direct summand of $e kG i$ as
an $kQ$-$kR$-bimodule. Then $\varphi\in$ $\Hom_\CF(Q,R)$.
\end{Proposition}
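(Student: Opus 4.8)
The plan is to reduce this to the already-established fusion statements for the almost source algebra $A = i\CO G i$, specifically Proposition~\ref{almostsourcealgebrabimodfusion} and Proposition~\ref{almostsourcealgebrafusion}, by relating the bimodule $e\, kG\, i$ to the image of $A$ under the Brauer construction $\Br_{\Delta Q}$. First I would replace $Q$ by a fully $\CF$-centralised subgroup: choose an isomorphism $\psi : Q \to Q'$ in $\CF$ with $Q'$ fully $\CF$-centralised, and observe that by Proposition~\ref{almostsourcealgebrabimodfusion}(i) one can transport the idempotent and the bimodule summand along $\psi$, so it suffices to treat the case where $Q$ itself is fully $\CF$-centralised. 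In that case, by the Proposition cited after the definition of almost source idempotent (``\cite[Proposition 4.5]{Liperm}''), $\Br_{\Delta Q}(i)$ is an almost source idempotent of $kC_G(Q)e$, and $A(\Delta Q) = \Br_{\Delta Q}(i)\,kC_G(Q)e\,\Br_{\Delta Q}(i)$ is an almost source algebra of the block $e$ of $kC_G(Q)$, with its own fusion system $\CF_Q$ on $C_P(Q)$, which is a defect group of $e$.

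Next I would pin down the relation between ${}_\varphi kR$ being a summand of $e\,kG\,i$ and the bimodule structure of $A$ itself. The key point is that $\CF$ is determined by the $\CO P$-$\CO P$-bimodule structure of $A$ (the last sentence of Proposition~\ref{almostsourcealgebrafusion}), together with the fact that the only indecomposable $\CO Q$-$\CO R$-summands of $A$ are of the form $\CO Q \tenOS {}_\mu \CO R$ with $\mu \in \Hom_\CF(S,R)$ and $S \leq Q$ (part (i) of that Proposition). Concretely: since $e$ is the block with $\Br_{\Delta Q}(i)e \neq 0$ and ${}_\varphi kR$ is a $kQ$-$kR$-summand of $e\,kG\,i = \Br_{\Delta Q}(i)\,kG\,i$, and since $\Br_{\Delta Q}(i)$ is a sum of primitive idempotents in $A^{\Delta Q}$ with nonzero Brauer value, one can realise ${}_\varphi kR$ — or at least a summand carrying the same homomorphism $\varphi$ — inside $m\,kG\,i$ for a suitable primitive $m \in A^{\Delta Q}$ with $\Br_{\Delta Q}(m)\neq 0$. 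Using the Morita equivalence between $kG b$ and $A$ (and between $kC_G(Q)e$ and $A(\Delta Q)$) one pulls this back to the statement that the corresponding homomorphism is visible in the bimodule decomposition of $A$, whence $\varphi \in \Hom_\CF(Q,R)$ by Proposition~\ref{almostsourcealgebrafusion}(i).

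The main obstacle I anticipate is the bookkeeping in the first step: carefully verifying that the property ``${}_\varphi kR$ is a $kQ$-$kR$-summand of $e\,kG\,i$'' is preserved under replacing $Q$ by a fully $\CF$-centralised conjugate and simultaneously replacing $\varphi$ by the induced morphism, and that the block $e$ tracks correctly along the way (i.e.\ $\Br_{\Delta Q'}(i)e' \neq 0$ for the transported block $e'$). This uses the compatibility of Brauer pairs with $G$-conjugation that is built into the definition of $\CF$ via the Alperin–Broué theory recalled in Section~2. A secondary technical point is identifying, via Higman's criterion and Proposition~\ref{vertexsourcealgebra}, which primitive idempotent $m$ of $A^{\Delta Q}$ to use so that the homomorphism carried on the summand of $m\,kG\,i$ is exactly $\varphi$ and not some $\CF$-conjugate — but once the summand is nonzero, any such conjugate already lies in $\Hom_\CF(Q,R)$, so this does not in fact cause a problem, only a mild loss of precision that the conclusion tolerates. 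Everything else is a routine application of the Mackey-type decompositions already recorded in Propositions~\ref{almostsourcealgebrabimodfusion} and~\ref{almostsourcealgebrafusion}.
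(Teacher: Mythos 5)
Your first step — replacing $Q$ by a fully $\CF$-centralised subgroup via an isomorphism in $\CF$ (which in this context is implemented by conjugation with an element $x\in G$, carrying $(Q,e)$ to $(T,f)$ and $\varphi$ to $\varphi\circ\psi^{-1}$) — is precisely what the paper does, and this part of your proposal is fine.

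The core step, however, has a genuine gap. You assert that $e\,kG\,i = \Br_{\Delta Q}(i)\,kG\,i$; this is false. The idempotent $e$ is a block of $kC_G(Q)$ with $\Br_{\Delta Q}(i) = \Br_{\Delta Q}(i)\,e$, so one only has $\Br_{\Delta Q}(i)\,kG\,i\subseteq e\,kG\,i$, and the containment is strict in general. Consequently, your claim that ${}_\varphi kR$ can be ``realised inside $m\,kG\,i$ for a suitable primitive $m\in A^{\Delta Q}$'' does not follow, and the later appeal to the Morita equivalence between $kC_G(Q)e$ and $A(\Delta Q)$ (which is an equivalence of algebras, not a statement about the $kQ$-$kR$-bimodule structure of $ekGi$) does not bridge the gap. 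What actually makes the passage from ``${}_\varphi kR$ is a summand of $e\,kG\,i$'' to ``${}_\varphi kR$ is a summand of $i\,kG\,i$'' work in the paper is a combination of two ingredients you do not use: (a) the elementary but essential observation that ${}_\varphi kR$ is an indecomposable $kQ$-$kR$-bimodule with a vertex $\Delta_\varphi Q$ of order $|Q|$, so that it is carried by some primitive \emph{local} idempotent $j\in (kGb)^{\Delta Q}$ with $\Br_{\Delta Q}(j)\in kC_G(Q)e$; and (b) the result quoted in the paper as \cite[Proposition 4.6(ii)]{Liperm} (not 4.5, which is what you cite), stating that when $(Q,e)$ is fully $\CF$-centralised every local point of $Q$ on $kGb$ associated with $e$ has a representative inside $i\,kG\,i$. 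After replacing $j$ by a unit conjugate lying in $i\,kG\,i$, one has $jkGi\subseteq ikGi$, and only then does Proposition~\ref{almostsourcealgebrafusion}(i) apply. Without the local point argument, your proposal does not reach the point where that Proposition can be invoked.
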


\begin{proof}
Let $T$ be a fully $\CF$-centralised subgroup of $P$ isomorphic to
$Q$ in the fusion system $\CF$. That is, if $f$ is the unique
block of $kC_G(T)$ satisfying $\Br_{\Delta T}(i)f\neq$ $0$, then
$C_P(T)$ is a defect group of $kC_G(T)f$, there is an element
$x\in$ $G$ such that $(T,f)=$ ${^x{(Q,e)}}$, and the isomorphism
$\psi : Q\to T$ defined by $\psi(u)=$ $xux^{-1}$ is in $\CF$.
Since ${_\varphi{kR}}$ is a summand of $e_QkG i$, multiplication
by $x$ shows that the $kT$-$kR$-bimodule  
${_{\varphi\circ\psi^{-1}}{kS}}$ is a direct summand of $xekGi=$
$xex^{-1}kGi=$ $fkGi$. Moreover, $\varphi$ is a morphism in $\CF$ if and only
if $\varphi\circ\psi^{-1}$ is. Thus, after possibly replacing
$(Q,e)$ by $(T,f)$ we may assume that $(Q,e)$ is fully $\CF$-centralised.
By \cite[Proposition 4.6]{Liperm} (ii)
this implies that every local point of $Q$ on $kGb$ associated with $e$
has a representative in $ikGi$. Since ${_\varphi{kR}}$ is indecomposable
as a $kQ$-$kR$-bimodule with a vertex of order $|Q|$, this bimodule is
isomorphic to a direct summand of $jkGi$ for some primitive local idempotent
$j$ in $(kGb)^{\Delta Q}$ appearing in a primitive decomposition of 
$e$ in $(kGb)^{\Delta Q}$. But then $\Br_{\Delta Q}(j)\in$ $kC_G(Q)e$, and
hence, after possibly replacing $j$ with a suitable 
$((kGb)^{\Delta Q})^\times$-conjugate, we may assume that $j\in$ $ikGi$.
It follows from Proposition \ref{almostsourcealgebrafusion} (i) that
$\varphi$ is a morphism in $\CF$.
\end{proof}

\begin{Proposition} \label{diagbimodtensorproduct}
Let $G$ be a finite group, $H$ a subgroup of $G$ and $A$ an $\CO$-algebra.
Let $M$ be an $\OH$-$A$-bimodule and $V$ an $\OH$-module.
Consider $V\tenO M$ as an $\OH$-$A$ bimodule with $H$
acting diagonally on the left, consider $V$ as a module
for $k\Delta H$ via the canonical isomorphism $\Delta H\cong$
$H$ and consider $\Ind^{G\times H}_{\Delta H}(V)$ as
an $\OG$-$\OH$-bimodule. We have a natural isomorphism
of $\OG$-$A$-bimodules
$$\Ind^G_H(V\tenO M)\cong \Ind^{G\times H}_{\Delta H}(V)\tenOH M$$
sending $x\ten(v\ten m)$ to $((x,1)\ten v)\ten m$, where
$v\in$ $V$ and $m\in$ $M$.
\end{Proposition}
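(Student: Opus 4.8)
The isomorphism is prescribed by an explicit formula, so the plan is to verify that the stated assignment is a well-defined $\OG$-$A$-bimodule homomorphism and to exhibit its inverse by a symmetric formula. Recall that $\Ind^G_H(V\tenO M) = \OG\tenOH(V\tenO M)$, with $H$ acting diagonally on $V\tenO M$, and that $\Ind^{G\times H}_{\Delta H}(V)\tenOH M = (\CO(G\times H)\ten_{\CO\Delta H}V)\tenOH M$, where $\CO\Delta H$ acts on $V$ through the canonical isomorphism $\Delta H\cong H$ and where we use the usual identification of $\OG$-$\OH$-bimodules with left $\CO(G\times H)$-modules, so that the right action of $h'\in H$ on the induced module $\Ind^{G\times H}_{\Delta H}(V)$ is $w\cdot h' = (1,h'^{-1})\cdot w$. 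First I would define $\phi$ on the free $\CO$-module $\OG\tenO(V\tenO M)$ by $\phi(x\ten(v\ten m)) = ((x,1)\ten v)\ten m$ for $x\in G$, $v\in V$, $m\in M$, extended $\CO$-linearly. This visibly factors through the balancing over the right action of $\CO$ on $\OG$ and the canonical left action of $\CO$ on $V\tenO M$, so the only thing to check is that $\phi$ respects the left $\OH$-balancing, i.e. that $\phi(xh\ten(v\ten m)) = \phi(x\ten(hv\ten hm))$ for all $h\in H$.

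For this one computes in $\CO(G\times H)$ that $(xh,1) = (x,h^{-1})(h,h) = (x,1)(1,h^{-1})(h,h)$, with $(h,h)\in\Delta H$ corresponding to $h$ under $\Delta H\cong H$. Hence in $\CO(G\times H)\ten_{\CO\Delta H}V$ we get $(xh,1)\ten v = (x,1)(1,h^{-1})(h,h)\ten v = (x,1)(1,h^{-1})\ten hv = (1,h^{-1})\cdot\big((x,1)\ten hv\big)$, using that $(x,1)$ and $(1,h^{-1})$ commute. Since $(1,h^{-1})$ implements the right $H$-action by $h$ on $\Ind^{G\times H}_{\Delta H}(V)$, this equals $\big((x,1)\ten hv\big)\cdot h$, and therefore, after tensoring with $M$ over $\OH$, $((xh,1)\ten v)\ten m = ((x,1)\ten hv)\ten hm = \phi(x\ten(hv\ten hm))$, as needed. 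This single identity is essentially the entire content of the proposition; it is also the step one must be careful with, since it is precisely here that the two tensor relations and the convention for the right $\OH$-module structure interact, and getting the placement of the factor $(1,h^{-1})$ and the inverses right is the one place where a slip would break the argument.

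It is immediate from the formula that $\phi$ is a homomorphism of $\OG$-$A$-bimodules: an element $x'\in G$ acts on the left factor $\OG$ in the source and on the left $G$-coordinate of $\CO(G\times H)$ in the target, while $a\in A$ acts on $M$ on both sides. For the inverse I would define $\psi$ on $\CO(G\times H)\ten_\CO V\ten_\CO M$ by $\psi(((g,h)\ten v)\ten m) = gh^{-1}\ten(hv\ten m)$, extended $\CO$-linearly; a check symmetric to the one above shows that $\psi$ descends over $\CO\Delta H$ (replacing $(g,h)$ by $(gy,hy)$ replaces $gh^{-1}$ by $gy(hy)^{-1} = gh^{-1}$ and $hv$ by $hyv$, matching the effect of replacing $v$ by $yv$) and over $\OH$, so $\psi$ is a well-defined map $\Ind^{G\times H}_{\Delta H}(V)\tenOH M\to\Ind^G_H(V\tenO M)$. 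Finally $\psi\phi(x\ten(v\ten m)) = x\ten(v\ten m)$ directly, and $\phi\psi(((g,h)\ten v)\ten m) = ((gh^{-1},1)\ten hv)\ten m = ((g,h)\ten v)\ten m$ by the identity $(gh^{-1},1)(h,h) = (g,h)$; hence $\phi$ and $\psi$ are mutually inverse, and naturality in $V$ and $M$ is evident from the defining formulas.
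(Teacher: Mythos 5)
Your proof is correct and is exactly the ``straightforward verification'' that the paper invokes without detail: you define the map by the prescribed formula, check that it descends through the $\OH$- and $\CO\Delta H$-balancing (the identity $(xh,1)=(x,1)(1,h^{-1})(h,h)$ being the single nontrivial point), and exhibit the explicit inverse. Nothing to add.
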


\begin{proof} This is a straightforward verification.
\end{proof}

%%%%%%%%%%%%%%%%%%%%%%%%%%%%%%%%%%%%%%%%%%%%%%%%%%%%%%%%%%%%%%%%%%%%%%%%
\section{On fusion-stable endopermutation modules}

Let $P$ be a finite $p$-group. Following Dade \cite{Dadeendo} a finitely 
generated $\CO$-free $\OP$-module $V$ is an {\it endopermutation module} 
if $\End_\CO(V)\cong$ $V\tenO V^*$ is a permutation $\OP$-module, with
respect to the `diagonal' action of $P$. See Th\'evenaz \cite{Thevtour} 
for an overview on this subject and some historic background, leading up 
to the classification of endopermutation modules.  We will use without 
further comment some of the basic properties, due to Dade, of 
endopermutation modules - see for instance \cite[\S 28]{Thev}. 
If $V$ is an endopermutation $\OP$-module having an indecomposable 
direct summand with vertex $P$, then for any two subgroups $Q$, $R$ of 
$P$ such that $Q$ is normal in $R$, there is an endopermutation 
$kR/Q$-module $V'=$ $\Defres^P_{R/Q}(V)$ satisfying 
$\End_\CO(V)(\Delta Q)\cong$ $\End_k(V')$ as $R/Q$-algebras, and
as interior $R/Q$-algebras if $R\subseteq$ $QC_P(Q)$. This construction
is also known as Dade's `slash' construction.

\begin{Definition} \label{fusionstablemodule}
Let $P$ be a finite $p$-group and $\CF$ a fusion system on $P$. Let $Q$
be a subgroup of $P$ and $V$ be an endopermutation $\OQ$-module. We say
that $V$ is {\it $\CF$-stable} if for any subgroup $R$ of $Q$ and any
morphism $\varphi : R\to$ $Q$ in $\CF$ the sets of isomorphism classes of
indecomposable direct summands with vertex $R$ of the $\OQ$-modules
$\Res^{Q}_R(V)$ and ${_\varphi{V}}$ are equal (including the possibility
that both sets may be empty).
\end{Definition}

With the notation of \ref{fusionstablemodule}, the property of $V$ being
$\CF$-stable does not necessarily imply that $\Res^{Q}_R(V)$ and
${_\varphi{V}}$ have to be {\it isomorphic} as $\OR$-modules, where
$\varphi : R\to$ $Q$ is a morphism in $\CF$ (so this is a slight deviation
from the terminology in \cite[3.3. (1)]{Lima}). What the $\CF$-stability of
$V$  means is that the indecomposable direct factors of $\Res^{Q}_R(V)$
and ${_\varphi{V}}$ with vertex $R$, if any, are isomorphic, but they may
occur with different multiplicities in direct sum decompositions (in other 
words, in the terminology of \cite[3.3.(2)]{Lima} the class of $V$ in the 
Dade group is $\CF$-stable, provided that $V$ has an indecomposable direct 
summand with vertex $P$). By \cite[3.7]{Lima}, every class in $D_\CO(P)$ 
having an $\CF$-stable representative has a representative $W$ satisfying 
the stronger stability condition $\Res^P_R(W)\cong$ ${_\varphi{W}}$ for 
any morphism $\varphi : R\to$ $P$ in $\CF$. It follows from Alperin's 
fusion theorem that in order to check whether an 
endopermutation $\OP$-module $V$ with an indecomposable direct summand 
of vertex $P$ is $\CF$-stable, it suffices to verify that $\Res^P_R(V)$ and
${_\varphi{V}}$ have isomorphic summands with vertex $R$ for any
$\CF$-essential subgroup $R$ of $P$ and any $p'$-automorphism $\varphi$
of $R$ in $\Aut_\CF(R)$. In particular, if $P$ is abelian, then an
indecomposable endopermutation $\OP$-module $V$ with vertex $P$ is
$\CF$-stable if and only if $V\cong$ ${_\varphi{V}}$ for any $\varphi\in$
$\Aut_\CF(P)$. In the majority of cases where Definition
\ref{fusionstablemodule} is used we will have $Q=$ $P$. One notable
exception arises in the context of bimodules, where we consider the
fusion systems $\CF\times\CF$ on $P\times P$ with the diagonal subgroup
$\Delta P$ playing the role of $Q$. The key argument exploiting the
$\CF$-stability of an endopermutation $\OP$-module $V$ having an
indecomposable direct summand with vertex $P$ goes as follows: if $Q$ is
a subgroup of $P$ and $\varphi : Q\to$ $P$ a morphism in $\CF$, then the
restriction to $\Delta Q$ of $V\tenO {_\varphi{V^*}}$ is again a
permutation module, or equivalently, $V\tenO V^*$ remains a permutation
module for the twisted diagonal subgroup $\Delta_\varphi Q=$
$\{(u,\varphi(u))\ |\ u\in Q\}$ of $P\times P$.
By a result of Brou\'e in \cite{Broue85},
if $V$ is a permutation $\OP$-module, then $\End_\CO(V)(\Delta P)\cong$
$\End_k(V(P))$. This is not true for more general modules, but Dade's
`slash' construction from \cite{Dadeendo} for endopermutation modules 
yields a generalisation of this isomorphism, as follows. 

\begin{Proposition} \label{endoinducedsource}
Let $A$ be an almost source algebra of a block of a finite group algebra
over $\CO$ with a defect group $P$ and fusion system $\CF$ on $P$.
Let $Q$ be a subgroup of $P$ and let $V$ be an $\CF$-stable
endopermutation $\OQ$-module having an indecomposable direct summand
with vertex $Q$. Set $U=$ $A\tenOQ V$. The following hold.

\smallskip\noindent (i)
As an $\OQ$-module, $U$ is an endopermutation module, and $U$ has a direct
summand isomorphic to $V$.

\smallskip\noindent (ii)
Let $R$ be a subgroup of $Q$. The $A$-module structure on $U$ induces
an $A(\Delta R)$-module structure on  $U'=$ $\Defres^Q_{RC_Q(R)/R}(U)$
extending the $kC_Q(R)$-module structure on $U'$  such that we have an 
isomorphism $(\End_\CO(U))(\Delta R)\cong$ $\End_k(U')$ as algebras and 
as $A(\Delta R)$-$A(\Delta R)$-bimodules.
\end{Proposition}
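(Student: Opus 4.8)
The plan is to reduce everything to the two basic facts already assembled in the excerpt: first, that as an $A$-module $U = A\tenOQ V$ is relatively $\OQ$-projective (so restricting to $\OQ$ it is a summand of $\OQ\tenOQ\cdots$, and in particular one can analyze its indecomposable $\OQ$-summands via Proposition \ref{almostsourcealgebrafusion}); second, Dade's `slash' construction $\Defres$, which turns the Brauer construction $(\End_\CO(U))(\Delta R)$ applied to an endopermutation module into the endomorphism algebra of an honest module $U'$ over $kC_Q(R)$ (or $kRC_Q(R)/R$).

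\medskip\noindent
For part (i), I would argue as follows. By Proposition \ref{almostsourcealgebrafusion}(i), $A$ decomposes as an $\OQ$-$\OQ$-bimodule into summands of the form $\OQ\tenOS{_\varphi\OQ}$ with $S\le Q$ and $\varphi : S\to Q$ a morphism in $\CF$. Hence $U = A\tenOQ V$ decomposes as an $\OQ$-module into summands of the form $\OQ\tenOS{_\varphi V}=\Ind^Q_S({_\varphi(\Res^{?}V)})$; more precisely each summand is $\Ind^Q_S(W)$ where $W$ runs over indecomposable summands of $\Res^Q_{\varphi(S)}(V)$ transported along $\varphi$. Since $V$ is an endopermutation module, each $\Res^Q_{\varphi(S)}(V)$ is an endopermutation $k\varphi(S)$-module, its summands are endopermutation, inducing them up to $Q$ yields (by Dade's basic results on endopermutation modules under induction, cited in the excerpt) endopermutation $\OQ$-modules, and the direct sum of endopermutation modules with a common vertex is again endopermutation; so $U$ is an endopermutation $\OQ$-module provided the compatibility at the top vertex holds --- here the $\CF$-stability of $V$ is exactly what guarantees that the summands of $\Res^Q_{\varphi(S)}(V)$ with vertex $\varphi(S)$, hence of the relevant pieces of $U$, are compatible. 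To see that $U$ has a summand isomorphic to $V$: by \cite[4.2]{Liperm} $\OQ$ (equivalently the trivial twist $S=Q$, $\varphi=\mathrm{id}$) occurs as a summand of $A$ as an $\OQ$-$\OQ$-bimodule, since $A$ is relatively $\OQ\tenO\OQ^\op$-projective and the identity bimodule splits off; tensoring $-\tenOQ V$ shows $V\mid U$.

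\medskip\noindent
For part (ii), fix $R\le Q$ and apply the Brauer construction $\Br_{\Delta R}$ to $\End_\CO(U)\cong U\tenO U^*$. Because $U$ is an endopermutation $\OQ$-module by (i), $U\tenO U^*$ is a permutation $\OQ$-module, so Brou\'e's theorem \cite{Broue85} (quoted just before the Proposition) applies and gives $(\End_\CO(U))(\Delta R)\cong\End_k(U(R))$ --- but more to the point, Dade's slash construction gives $(\End_\CO(U))(\Delta R)\cong\End_k(U')$ as algebras, where $U'=\Defres^Q_{RC_Q(R)/R}(U)$, and naturally as $kC_Q(R)$-algebras. The $A$-module structure map $A\to\End_\CO(U)$ is a homomorphism of interior $Q$-algebras (or at least $Q$-algebras), so applying $\Br_{\Delta R}$ to it produces an algebra homomorphism $A(\Delta R)\to(\End_\CO(U))(\Delta R)\cong\End_k(U')$; this is precisely the promised $A(\Delta R)$-module structure on $U'$ extending the $kC_Q(R)$-action. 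The isomorphism $(\End_\CO(U))(\Delta R)\cong\End_k(U')$ is then tautologically one of $A(\Delta R)$-$A(\Delta R)$-bimodules because both sides carry the bimodule structure obtained by pre- and post-composing with the image of $A(\Delta R)$.

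\medskip\noindent
The main obstacle I anticipate is part (i): proving that $U$ is genuinely an endopermutation $\OQ$-module, not merely a sum of endopermutation modules. The subtlety is that a direct sum of endopermutation modules need not be endopermutation unless the top-vertex summands represent the same element of the Dade group --- this is exactly where $\CF$-stability of $V$ must be fed in, via the observation recorded in the excerpt that for any $\varphi : S\to P$ in $\CF$ the restriction of $V\tenO{_\varphi V^*}$ to $\Delta S$ remains a permutation module. Making this bookkeeping precise --- matching up the $\OQ$-summands of $U$ coming from the various twists $\OQ\tenOS{_\varphi\OQ}$ of $A$ with the corresponding restrictions $\Res^Q_{\varphi(S)}(V)$, and checking that $\End_\CO(U)=U\tenO U^*$ is a permutation module by computing it as a sum of terms $\OQ\tenOS{_{\varphi}(V\tenO{_\psi V^*})}$-type pieces, each of which is a permutation module by $\CF$-stability --- is the real content; everything in (ii) is then a formal consequence of Dade's slash construction and functoriality of $\Br_{\Delta R}$.
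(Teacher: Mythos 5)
Your proposal follows essentially the same route as the paper's own proof: decompose $A$ as an $\OQ$-$\OQ$-bimodule using Proposition~\ref{almostsourcealgebrafusion}(i), conclude that the indecomposable $\OQ$-summands of $U$ all have the form $\Ind^Q_R({_\varphi{V_{\varphi(R)}}})$, use $\CF$-stability to control these, and for (ii) apply $\Br_{\Delta R}$ to the interior $Q$-algebra homomorphism $A\to\End_\CO(U)$ combined with Dade's slash construction.

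Two small corrections to your sketch of part (i), both of which you effectively flag yourself in your closing paragraph, are worth making precise. First, inducing an endopermutation $\OR$-module up to $Q$ need not give an endopermutation $\OQ$-module. Second, a direct sum of endopermutation modules sharing a common top vertex is not automatically endopermutation. The cleanest way to see that $U$ is endopermutation, and what the paper's one-line conclusion is implicitly relying on, is this: $\CF$-stability gives ${_\varphi(V_{\varphi(R)})}\cong V_R$, the vertex-$R$ summand of $\Res^Q_R(V)$, so each $\OQ$-summand of $U$ is isomorphic to $\Ind^Q_R(V_R)$, hence to a summand of $\Ind^Q_R\Res^Q_R(V)\cong V\tenO\CO[Q/R]$; therefore $U$ is a summand of $V\tenO\Omega$ for a permutation $\OQ$-module $\Omega$, and so is an endopermutation $\OQ$-module with an indecomposable summand $V$ of vertex $Q$ (the latter coming from the summand $\OQ$ of $A$). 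One further slip: the Brou\'e isomorphism $(\End_\CO(U))(\Delta R)\cong\End_k(U(\Delta R))$ does not apply to $U$ as stated, since $U$ is an endopermutation module and not a permutation module; but you then correctly redirect to Dade's slash construction, which is exactly what the paper uses.
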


\begin{proof}
By \cite[5.2]{Liperm},  every indecomposable direct
summand of $A$ as an $\OQ$-$\OQ$-bimodule is isomorphic to
$\OQ \ten_\OR {_\varphi{\OQ}}$ for some subgroup $R$ of $Q$ and
some morphism $\varphi\in$ $\Hom_\CF(R,Q)$, and at least one
summand of $A$ as an $\OQ$-$\OQ$-bimodule is isomorphic to $\OQ$.  
Thus every indecomposable direct
summmand of $U$ is isomorphic to $\Ind^Q_R({_\varphi{V_{\varphi(R)}}})$
for some subgroup $R$ of $Q$ and some $\varphi\in$ $\Hom_\CF(R,Q)$,
where $V_{\varphi(R)}$ is an indecomposable direct summand of vertex
$\varphi(R)$ of $\Res^Q_{\varphi(R)}(V)$, and $V$ is a summand of $U$
as an $\OQ$-module. Since $V$ is $\CF$-stable, we have
${_\varphi(V_{\varphi(R)})}\cong$ $V_R$, which implies that the
restriction to $\OQ$ of $U$ is an endopermutation $\OQ$-module.
Statement (i) follows. For statement (ii) we consider the structural
algebra homomorphism $A\to$ $\End_\CO(U)$ given by the action of $A$
on $U$. This is a homomorphism of interior $Q$-algebras. Applying the
Brauer construction with respect to $\Delta R$, where $R$ is a subgroup
of $Q$, yields a homomorphism of interior $C_Q(R)$-algebras
$A(\Delta R)\to$ $(\End_\CO(U))(\Delta R)$. Since $U$ is an
endopermutation $\OQ$-module, we have $(\End_\CO(U))(\Delta R)\cong$
$\End_k(U')$ as interior $C_Q(R)$-algebras. This yields a homomorphism
$A(\Delta R)\to$ $\End_k(U')$, hence a canonical $A(\Delta R)$-module
structure on $U'$ with the properties as stated.
\end{proof}

Statement (ii) in Proposition \ref{endoinducedsource} is particularly
useful when $Q$ is fully $\CF$-centralised, since in that
case $C_P(Q)$ is a defect group of the unique block $e_Q$
of $kC_G(Q)$ satisfying $\Br_{\Delta Q}(i)e_Q\neq$ $0$,
and the algebras $A(\Delta Q)$ and $kC_G(Q)e_Q$ are
Morita equivalent. Statement (ii) of \ref{endoinducedsource} is
essentially equivalent to a result of Biland; since we will use this
for proving that the Theorems \ref{endopermutationstableMorita}
and \ref{blockendopermutationstableMorita} are equivalent
we state this and sketch a proof for the convenience of the reader.

\begin{Proposition}[{Biland \cite[Theorem 3.15 (i)]{Biland}}] 
\label{blockendoinducedsource}
Let $G$ be a finite group, $b$ a block of $\OG$, $P$ a defect group
of $b$ and $i\in$ $(\OGb)^{\Delta P}$ an almost source idempotent.
Let $Q$ be a subgroup of $P$ and let $V$ be an $\CF$-stable
endopermutation $\OQ$-module having an indecomposable direct summand
with vertex $Q$. Set $X=$ $\OG i\tenOQ V$. Let $R$ be a subgroup of $Q$,
denote by $e_R$ the unique block of $kC_G(R)$ satisfying
$\Br_{\Delta R}(i)e_R\neq$ $0$, and let $\hat e_R$ be the block
of $\CO C_G(R)$ which lifts $e_R$. 
There is a canonical $kC_G(R)e_R$-module $Y_R$ such that we have
an isomorphism $(\End_\CO(\hat e_R Y))(\Delta R)\cong$ $\End_k(Y_R)$
as algebras and as $kC_G(R)e_R$-$kC_G(R)e_R$-bimodules.
\end{Proposition}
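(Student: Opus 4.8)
The plan is to deduce Proposition~\ref{blockendoinducedsource} from Proposition~\ref{endoinducedsource} by transporting the latter through the canonical Morita equivalence between a block algebra and its almost source algebra. First I would fix $A = i\OG i$ and observe that, since $i$ is an almost source idempotent, $X = \OG i \tenOQ V$ and $iX = A \tenOQ V = U$ correspond to one another under the Morita equivalence $M \mapsto iM$ between $\OGb$ and $A$; more precisely $X \cong \OG i \tenA U$, so $\End_\CO(X)$ and $\End_\CO(U)$ are Morita-equivalent bimodules, and in fact one checks that $i\End_\CO(X)i \cong \End_\CO(U)$ compatibly with the interior $P$-algebra (hence interior $Q$-algebra) structures. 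The block $\hat e_R$ enters because $\hat e_R Y$ (where $Y = \OG i \tenOR V$, or the appropriate $R$-restricted analogue) is the summand of $X$ on which the relevant local structure lives; the point of cutting by $\hat e_R$ is exactly to isolate the part whose Brauer construction at $\Delta R$ is nonzero and lands in $kC_G(R)e_R$.

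The second step is to run the Brauer construction at $\Delta R$ through this correspondence. Applying $\Br_{\Delta R}$ to the structural map $A \to \End_\CO(U)$ gave, in Proposition~\ref{endoinducedsource}(ii), an interior $C_Q(R)$-algebra homomorphism $A(\Delta R) \to (\End_\CO(U))(\Delta R) \cong \End_k(U')$. Now I would use Proposition~\ref{almostsourcealgebrafusion}, together with the fact that for $R$ fully $\CF$-centralised $A(\Delta R) = \Br_{\Delta R}(i)\, kC_G(R)\, \Br_{\Delta R}(i)$ is an almost source algebra of $kC_G(R)e_R$ (this is the Proposition cited as \cite[Proposition 4.5]{Liperm} in the excerpt), to pass from the $A(\Delta R)$-module $U'$ back up to a genuine $kC_G(R)e_R$-module $Y_R := kC_G(R)e_R \cdot \Br_{\Delta R}(i) \ten_{A(\Delta R)} U'$ via the inverse Morita equivalence. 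Then $\End_k(Y_R) \cong \End_k(U') \cong (\End_\CO(U))(\Delta R)$ as algebras and as bimodules over $A(\Delta R)$, hence — pushing the bimodule structure up — over $kC_G(R)e_R$. Finally I would identify $(\End_\CO(U))(\Delta R)$ with $(\End_\CO(\hat e_R Y))(\Delta R)$: the Brauer construction at $\Delta R$ kills all indecomposable $\CO\Delta R$-summands of $\End_\CO(X)$ not having $\Delta R$ in their vertex, and of the remaining summands only those coming from the block $e_R$ survive, which is precisely the passage from $X$ to $\hat e_R Y$. For a subgroup $R$ that is not fully $\CF$-centralised, I would first replace $R$ by an $\CF$-isomorphic fully centralised subgroup (possible by Alperin--Brou\'e, and harmless since conjugation in $G$ induces the required isomorphisms on all the objects in sight), prove the statement there, and transport back.

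The main obstacle I anticipate is the bookkeeping needed to show that the idempotent $\hat e_R$ (a block idempotent of $\CO C_G(R)$, living in the group algebra side) matches up, after applying the Brauer construction and the Morita correspondence, with the idempotent $\Br_{\Delta R}(i)$ of $A(\Delta R)$ in such a way that $(\End_\CO(\hat e_R Y))(\Delta R)$ really is $(\End_\CO(U))(\Delta R)$ and not some larger or smaller summand; in particular one must be careful that no indecomposable summand of $X$ with a strictly smaller vertex that nonetheless belongs to $e_R$ contributes, and that the endopermutation property — which is what makes Dade's slash construction and the isomorphism $(\End_\CO(U))(\Delta R) \cong \End_k(U')$ available — is genuinely being used on the $U$-side, where it is clean, rather than on the $X$-side, where (as the paragraph after Theorem~\ref{blockendopermutationstableMorita} warns) it can fail. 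Once these identifications are pinned down, the algebra and bimodule isomorphism claims are formal consequences of the two Morita equivalences and Proposition~\ref{endoinducedsource}(ii), so I would keep the write-up short and refer to \cite[Theorem 3.15]{Biland} for the original argument.
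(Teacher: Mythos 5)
Your route---passing everything through the Morita equivalences $\OGb\simeq A$ and, for $R$ fully $\CF$-centralised, $kC_G(R)e_R\simeq A(\Delta R)$, in order to deduce the block-level statement from Proposition~\ref{endoinducedsource}---is genuinely different from the paper's proof, which works directly at the group-algebra level and never passes through the almost source algebra. The paper writes $\hat e_R X = \hat e_R\OG i\tenOQ V$, decomposes $\hat e_R\OG i$ as an $\OR$-$\OQ$-bimodule, notes that every indecomposable summand with a vertex of order at least $|R|$ has the form ${_\varphi{\OQ}}$ for some $G$-conjugation $\varphi : R\to Q$, and then invokes Proposition~\ref{Brauerpairfusion} to conclude that every such $\varphi$ lies in $\CF$; together with the $\CF$-stability of $V$ this shows that all indecomposable summands of $\Res_R(\hat e_R X)$ with vertex $R$ are isomorphic, which is exactly what makes $(\End_\CO(\hat e_R X))(\Delta R)$ a full matrix algebra $\End_k(Y_R)$. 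This argument needs no Morita equivalence, no restriction to fully centralised $R$, and Proposition~\ref{Brauerpairfusion} is the one essential ingredient that your write-up never mentions.

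Your version also contains a concrete error in the final step. Setting $Y_R=kC_G(R)e_R\Br_{\Delta R}(i)\ten_{A(\Delta R)}U'$, you claim $\End_k(Y_R)\cong\End_k(U')$; but Morita equivalences preserve Hom- and End-spaces \emph{over the algebras}, not absolute $\End_k$'s, and $\dim_k Y_R\neq\dim_k U'$ unless $\Br_{\Delta R}(i)=e_R$, so this isomorphism fails. For the same reason your proposed identification $(\End_\CO(U))(\Delta R)\cong(\End_\CO(\hat e_R X))(\Delta R)$ cannot hold: these are the corners of $(\End_\CO(X))(\Delta R)$ cut by $\Br_{\Delta R}(i)$ and by $e_R$ respectively, and the first is in general a proper subalgebra of the second. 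The salvageable version of your idea is that, when $R$ is fully $\CF$-centralised, $\Br_{\Delta R}(i)$ is a full idempotent in $e_R(\End_\CO(X))(\Delta R)e_R$ because $e_R\in kC_G(R)\Br_{\Delta R}(i)kC_G(R)$, so the two corners are Morita equivalent, and being Morita equivalent to the matrix algebra $\End_k(U')$ forces the $e_R$-corner to itself be $\End_k(Y_R)$ for some $kC_G(R)e_R$-module $Y_R$. You would still need to check compatibility of the Brauer construction with the various idempotent cuts and to give a genuine transfer argument for $R$ not fully $\CF$-centralised; neither step appears in your proposal, and once they are supplied the paper's direct argument via Proposition~\ref{Brauerpairfusion} is substantially shorter.
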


\begin{proof}
Applying $\Br_R$ to the canonical algebra homomorphism $\OGb\to$ 
$\End_k(Y)$ and cutting by $e_R$ and $\hat e_R$ yields an algebra
homomorphism $kC_G(R)e_r\to$ $(\End_\CO(\hat e_RY))(\Delta R)$.
In order to show that this is isomorphic to $\End_k(Y_R)$ for some 
module $Y_R$ it suffices to observe that the indecomposable summands of 
$\Res_R(\hat e_R Y)$ with vertex $R$ are all isomorphic. Note that 
$\hat e_R Y=$ $\hat e_R\OG i\tenOQ V$. Any indecomposable direct 
summand of $\hat e_R \OG i$ as an $\OR$-$\OQ$-bimodule with a vertex of 
order at least $|R|$ is isomorphic to ${_\varphi{\OQ}}$ for some group 
homomorphism $\varphi : R\to$ $Q$ induced by conjugation with an element 
in $G$. In view of the fusion stability of $V$, it suffices to show that 
$\varphi$ is a morphism in $\CF$. This is an immediate consequence of 
\ref{Brauerpairfusion}, whence the result.
\end{proof}

As mentioned earlier, there is a technical difference between
the Propositions \ref{endoinducedsource} and \ref{blockendoinducedsource}:
statement (i) in Proposition \ref{endoinducedsource} may not have an
an analogue at the block algebra level, since it is not clear
whether $\hat e_R X$ is an endopermutation $\OR$-module, because the
indecomposable direct summands with vertex strictly contained in $R$
might not be compatible.

%%%%%%%%%%%%%%%%%%%%%%%%%%%%%%%%%%%%%%%%%%%%%%%%%%%%%%%%%%%%%%%%%%%%%%%%%
\section{Bimodules with fusion-stable endopermutation source}

Throughout this Section we fix the following notation and hypotheses.
Let $G$, $H$ be finite groups, $b$ a block of $\OG$ and
$c$ a block of $\OH$. Suppose that $b$ and $c$ have a
common defect group $P$. Let $i\in$ $(\OGb)^{\Delta P}$
and $j\in$ $(\OHc)^{\Delta P}$ be almost source idempotents.
Set $A=$ $i\OG i$ and $B=$ $j\OH j$.
Suppose that $A$ and $B$ determine the same fusion system $\CF$
on $P$. Let $V$ be an $\CF$-stable indecomposable endopermutation
$\OP$-module with vertex $P$. Whenever expedient, we consider $V$ 
as an $\CO\Delta P$-module through the canonical isomorphism 
$\Delta P\cong$ $P$. Set 
$$U= A\tenOP\Ind^{P\times P}_{\Delta P}(V)\tenOP B\ , $$
$$X = \OG i\tenOP \Ind^{P\times P}_{\Delta P}(V)\tenOP j\OH \ .$$
The $A\tenO B^\op$-module $U$ corresponds to the $\CO(G\times H)$-module
$X$ through the canonical Morita equivalence between $A\tenO B^\op$ and
$\OGb\tenO \OHc^\op$; in particular, there is a canonical bijection
between the isomorphism classes of indecomposable direct summands of
$U$ and of $X$. 
This Section contains some technical statements which involve the tensor 
product of two bimodules. This yields a priori four module structures, 
and keeping track of those is essential - see Brou\'e 
\cite[\S 1]{BroueHigman} for some formal properties of {\it quadrimodules}. 
If the algebras under consideration are group algebras, we play this back 
to two actions via the usual `diagonal' convention: given two finite groups 
$G$, $H$ and two $\OG$-$\OH$-bimodules $S$, $S'$, we consider $S\tenO S'$ 
as an $\OG$-$\OH$-bimodule via the diagonal left action by $G$ and the 
diagonal right action by $H$; explicitly, $x\cdot(s\ten s')\cdot y=$ 
$xsy\ten xs'y$, where $x\in$ $G$, $y\in$ $H$, $s\in$ $S$, and $s'\in$ $S'$. 
This is equivalent to the diagonal $G\times H$-action if we interpret the 
$\OG$-$\OH$-bimodules as $\CO(G\times H)$-modules in the usual way.
The following result is a bimodule version of 
\ref{endoinducedsource}.

\begin{Proposition} \label{endoinducedsourcebimod}
Consider $U$ as an $\CO\Delta P$-module, with $(u,u)\in$ $\Delta P$ 
acting on $U$ by left multiplication with $u$ and right multiplication 
with $u^{-1}$. Then, as an $\CO\Delta P$-module, $U$ is an 
endopermutation module having $V$ as a direct summand, and for any 
subgroup $Q$ of $P$, the $A$-$B$-bimodule structure on $U$ induces an
$A(\Delta Q)$-$B(\Delta Q)$-bimodule structure on
$U'=$ $\Defres^{\Delta P}_{\Delta QC_P(Q)/Q}(U)$ such that we
have an isomorphism of $A(\Delta Q)\tenk B(\Delta Q)^\op$-bimodules
$\End_\CO(U)(\Delta Q)\cong$ $\End_k(U')$.
\end{Proposition}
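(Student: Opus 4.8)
The strategy is to reduce Proposition \ref{endoinducedsourcebimod} to the already-established Proposition \ref{endoinducedsource} by viewing the $A$-$B$-bimodule $U$ as a one-sided module over the algebra $A \tenO B^\op$, and exhibiting $A \tenO B^\op$ as (a quotient closely related to) an almost source algebra of a block of $\CO(G \times H)$ with defect group $\Delta P$. Concretely, $i \ten j^\op$ is an almost source idempotent of the block $b \ten c^\op$ of $\CO(G \times H)$ with defect group $P \times P$; but we want $\Delta P$ as the vertex group, so the relevant observation is rather that, by Proposition \ref{diagbimodtensorproduct},
$$U = A \tenOP \Ind^{P\times P}_{\Delta P}(V) \tenOP B \cong (A \tenO B^\op) \ten_{\CO(P\times P)} \Ind^{P \times P}_{\Delta P}(V) \cong (A \tenO B^\op) \ten_{\CO \Delta P} V,$$
where in the last step we use $\Ind^{P\times P}_{\Delta P}(V) \cong \CO(P\times P) \ten_{\CO\Delta P} V$ and cancel. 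Thus $U$ has exactly the shape `$\tilde A \tenO{\Delta P} V$' appearing in Proposition \ref{endoinducedsource}, with $\tilde A = A \tenO B^\op$ playing the role of the almost source algebra, $\Delta P$ the defect group, and $\CF \times \CF$ (restricted along $\Delta P \cong P$, i.e. the `diagonal' fusion system $\Delta\CF$ on $\Delta P$) the fusion system.

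**Key steps.** First I would record that $A \tenO B^\op$ is an almost source algebra of the block $b \ten c^\op$ of $\CO(G\times H)$: its defect group is $P \times P$, but one checks using \cite[4.2]{Liperm} that $A \tenO B^\op$, being a tensor product of two interior $P$-algebras each isomorphic to a summand of $A \tenOP A$ (resp. $B \tenOP B$), is also relatively $\CO\Delta P \tenO (\CO\Delta P)^\op$-projective in the appropriate sense — equivalently, $V$ being an endopermutation $\OP$-module viewed along $\Delta P \cong P$, the module $\Ind^{P\times P}_{\Delta P}(V)$ has vertex $\Delta P$. Second, I would verify that $V$, regarded as an $\CO\Delta P$-module, is $\Delta\CF$-stable where $\Delta\CF$ is the image of $\CF$ under $P \cong \Delta P$ inside the ambient fusion system $\CF \times \CF$ of $A \tenO B^\op$ on $P \times P$; this is immediate from the hypothesis that $V$ is $\CF$-stable, since morphisms in $\Delta\CF$ between subgroups of $\Delta P$ are precisely diagonal images of morphisms in $\CF$. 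Third, with these identifications in place, Proposition \ref{endoinducedsource} applied to the almost source algebra $A \tenO B^\op$, the subgroup $\Delta Q \le \Delta P$, and the $\Delta\CF$-stable endopermutation $\CO\Delta P$-module $V$ gives: $U = (A\tenO B^\op) \ten_{\CO\Delta P} V$ is an endopermutation $\CO\Delta P$-module with $V$ as a summand, and there is a canonical $(A\tenO B^\op)(\Delta(\Delta Q))$-module structure on $U' = \Defres^{\Delta P}_{\Delta Q C_P(Q)/Q}(U)$ with $\End_\CO(U)(\Delta(\Delta Q)) \cong \End_k(U')$ as algebras and as bimodules over that algebra. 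Finally I would unwind $(A \tenO B^\op)(\Delta(\Delta Q)) = A(\Delta Q) \tenk B(\Delta Q)^\op$ via the multiplicativity of the Brauer construction (Brou\'e's formula $(\Lambda \tenk \Lambda')(\Delta(\Delta Q)) \cong \Lambda(\Delta Q) \tenk \Lambda'(\Delta Q)$ for interior $P$-algebras), which converts the one-sided $(A\tenO B^\op)(\Delta(\Delta Q))$-module statement into the asserted $A(\Delta Q)$-$B(\Delta Q)$-bimodule statement, and translate the two-sided $\Delta P$-action $(u,u)\cdot u'\cdot(u,u)^{-1}$ used here into the left-mult-by-$u$, right-mult-by-$u^{-1}$ action in the statement.

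**The main obstacle.** The bookkeeping around the two layers of `diagonal' is the delicate point: we have $\Delta Q$ sitting inside $\Delta P$ sitting inside $P \times P$, and then when we form the Brauer construction of the interior $(P\times P)$-algebra $A \tenO B^\op$ with respect to the conjugation action of $\Delta P$, we are really taking $(A \tenO B^\op)(\Delta(\Delta Q))$ where the outer $\Delta$ is the diagonal of $\Delta Q$ inside $\Delta P \times \Delta P$ — and one must check carefully that this equals $A(\Delta Q) \tenk B(\Delta Q)^\op$ with the correct interior $C_P(Q)$-algebra structure (here the hypothesis $R \subseteq QC_P(Q)$ in the `slash' construction is automatic since we deflate by $RC_Q(R)/R$ exactly). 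A second, minor obstacle is checking that the `$V$ is a direct summand of $U$' conclusion survives the identification $U \cong (A\tenO B^\op)\ten_{\CO\Delta P} V$: this follows because $A \tenO B^\op$ has $\CO\Delta P$ (equivalently $\OP$ along the diagonal) as a bimodule summand, namely from $\OP \ten_\CO \OP^\op \twoheadrightarrow \CO\Delta P$ being split in the relevant sense, itself a consequence of \cite[4.2]{Liperm} applied to $A$ and to $B$. I expect the proof to be short once these identifications are set up cleanly; the substance is entirely in Proposition \ref{endoinducedsource}, and the present statement is its `two-variable' repackaging.
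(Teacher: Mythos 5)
Your proposal is correct and is essentially the paper's own proof: the paper dispatches Proposition~\ref{endoinducedsourcebimod} in one line as the special case of Proposition~\ref{endoinducedsource} obtained by substituting $P\times P$, $\CF\times\CF$, $\Delta P$, $\Delta Q$, $A\tenO B^\op$ for $P$, $\CF$, $Q$, $R$, $A$. The only slip is a small terminological one — in this substitution $\Delta P$ plays the role of the subgroup $Q$ of Proposition~\ref{endoinducedsource}, not of the defect group (which is $P\times P$); your later steps handle the identification $(A\tenO B^\op)(\Delta(\Delta Q))\cong A(\Delta Q)\tenk B(\Delta Q)^\op$ and the $(\CF\times\CF)$-stability of $V$ correctly, so this slip is immaterial.
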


\begin{proof}
This is the special case of \ref{endoinducedsource}
with $P\times P$, $\CF\times \CF$, $\Delta P$, $\Delta Q$, $A\tenO B^\op$,
instead of $P$, $\CF$, $Q$, $R$, $A$, respectively.
\end{proof}

\begin{Theorem} \label{EndBODeltaQ}
Let $Q$ be a subgroup of $P$, and let $U'$ be the
$A(\Delta Q)$-$B(\Delta Q)$-bimodule from  \ref{endoinducedsourcebimod} 
such that $(\End_\CO(U))(\Delta Q)\cong$ $\End_k(U')$. Then 
$\End_{B^\op}(U)$ is a $\Delta Q$-subalgebra of $\End_\CO(U)$, the 
algebra homomorphism
$$\beta : \End_{B^\op}(U)(\Delta Q) \to \End_\CO(U)(\Delta Q)$$
induced by the inclusion $\End_{B^\op}(U)\subseteq$ $\End_\CO(U)$
is injective, and there is a commutative diagram of algebra homomorphisms
$$\xymatrix{ \End_\CO(U)(\Delta Q) \ar[rr]^{\cong} & & \End_k(U') \\
\End_{B^\op}(U)(\Delta Q) \ar[u]^{\beta} \ar[rr]_{\gamma} & & 
\End_{B(\Delta Q)^\op}(U') \ar[u] }$$
where the right vertical arrow is the obvious inclusion map.
In particular, the algebra homomorphism $\gamma$  is injective.
\end{Theorem}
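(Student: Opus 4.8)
The plan is to analyse the three algebras $\End_{B^\op}(U) \subseteq \End_\CO(U)$ and $\End_{B(\Delta Q)^\op}(U') \subseteq \End_k(U')$ as $\Delta P$-algebras (resp.\ $C_P(Q)$-algebras) and track how the Brauer construction, followed by Dade's slash construction, interacts with taking $B^\op$-fixed endomorphisms. First I would observe that $\End_{B^\op}(U)$ is indeed a $\Delta P$-stable subalgebra of $\End_\CO(U)$: since $\Delta P$ acts on $U$ by left multiplication and right multiplication with inverses, and the right $B$-action commutes with all of this, conjugation by $\Delta P$ preserves the subalgebra of endomorphisms that commute with the right $B$-action. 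Hence $\Br_{\Delta Q}$ may be applied to the inclusion, yielding the map $\beta$, and similarly the $C_P(Q)$-stable inclusion $\End_{B(\Delta Q)^\op}(U') \subseteq \End_k(U')$ gives the right vertical arrow.

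Next I would address injectivity of $\beta$. The key point is that $\End_\CO(U)(\Delta Q)$ is \emph{computed} via the slash construction: because $U$ is an endopermutation $\CO\Delta P$-module (Proposition \ref{endoinducedsourcebimod}), $(\End_\CO(U))(\Delta Q) \cong \End_k(U')$ where $U' = \Defres^{\Delta P}_{\Delta Q C_P(Q)/Q}(U)$, and the isomorphism is induced by a concrete identification at the level of the relevant Brauer quotients. The subalgebra $\End_{B^\op}(U)$, being a direct summand of $\End_\CO(U)$ as a $\Delta P$-module (both are $p$-permutation modules up to the endopermutation twist — more precisely $\End_\CO(U) \cong \End_{B^\op}(U) \oplus C$ as $\CO\Delta P$-modules for a suitable complement, using that $U$ restricted to $B$ is projective so $\End_{B^\op}(U)$ is a summand of $\End_\CO(U)$ as bimodules over itself and a fortiori over $\CO\Delta P$), has the property that $\Br_{\Delta Q}$ restricted to it is the restriction of $\Br_{\Delta Q}$ on $\End_\CO(U)$. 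Injectivity of $\beta$ then reduces to showing that no nonzero element of $\End_{B^\op}(U)^{\Delta Q}$ that survives $\Br_{\Delta Q}$ inside $\End_{B^\op}(U)$ is killed in $\End_\CO(U)$ — but since $\End_{B^\op}(U)$ is a direct $\CO\Delta P$-summand of $\End_\CO(U)$, the Brauer construction is functorial and sends summands to summands, so $\End_{B^\op}(U)(\Delta Q)$ injects into $\End_\CO(U)(\Delta Q)$. This gives $\beta$ injective.

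To construct $\gamma$ and establish commutativity, I would work through the slash construction explicitly. Recall $U' = \Defres^{\Delta P}_{\Delta Q C_P(Q)/Q}(U)$ is obtained from $U(\Delta Q)$ (a $k[C_P(Q)]$-module, where I identify $\Delta Q C_P(Q)/Q$ appropriately) by splitting off the part coming from an indecomposable capped endopermutation summand. An endomorphism of $U$ commuting with the right $B$-action induces, upon applying $\Br_{\Delta Q}$ and then the slash construction, an endomorphism of $U'$; I must check it commutes with the induced right $B(\Delta Q)$-action, i.e.\ with the image of $B(\Delta Q) = B^{\Delta Q}/(\cdots)$ acting on $U'$. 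This is a formal consequence of the fact that the right $B$-module structure map $B^\op \to \End_\CO(U)$ and the structural map $\End_{B^\op}(U) \to \End_\CO(U)$ have commuting images by definition, and the Brauer/slash construction is a functor on interior $\Delta Q$-algebras (resp.\ $C_P(Q)$-algebras) preserving such commuting-image relations. Commutativity of the square then holds because all four maps are induced by the single pair of inclusions/structure maps passed through the functorial slash construction, so the diagram commutes by functoriality. Finally, injectivity of $\gamma$ is immediate: $\beta$ is injective and the right vertical arrow is injective (an inclusion), so the composite $\beta$ followed by the top isomorphism equals the composite $\gamma$ followed by the inclusion, forcing $\gamma$ injective.

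\textbf{Main obstacle.} The delicate step is verifying that the slash construction, which is defined and well-behaved on the \emph{ambient} endopermutation module $U$ and its endomorphism algebra, restricts compatibly to the subalgebra $\End_{B^\op}(U)$ — in particular that $\Br_{\Delta Q}(\End_{B^\op}(U))$ lands inside the ``capped'' part used to form $U'$ rather than being partly annihilated by the projection onto the endopermutation summand. I expect one resolves this by noting that $U$ itself, restricted along $B$, is projective, so $\End_{B^\op}(U)$ is a summand of $\End_\CO(U)$ not merely as $\CO\Delta P$-modules but compatibly with the $p$-permutation/endopermutation structure, making the projection defining $U'$ restrict to the analogous projection defining $U'$ as a $B(\Delta Q)$-module; this is precisely where one invokes Proposition \ref{endoinducedsourcebimod} together with the fact that the right $B$-structure is ``transparent'' to Dade's construction because it commutes with the $\Delta P$-action.
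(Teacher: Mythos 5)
The easy parts of your plan (that $\End_{B^\op}(U)$ is a $\Delta P$-stable subalgebra, the formal construction of $\gamma$, the commutativity of the square, and the deduction of injectivity of $\gamma$ from injectivity of $\beta$) all match what the paper does, essentially word for word. The gap is in your argument for the injectivity of $\beta$, which is the mathematical heart of the statement.

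You assert that $\End_{B^\op}(U)$ is a direct $\CO\Delta P$-module summand of $\End_\CO(U)$, ``using that $U$ restricted to $B$ is projective so $\End_{B^\op}(U)$ is a summand of $\End_\CO(U)$ as bimodules over itself and a fortiori over $\CO\Delta P$,'' and conclude injectivity of $\beta$ from additivity of the Brauer construction. This inference is not valid. Projectivity of $U$ as a right $B$-module does \emph{not} give a splitting of the inclusion $\End_{B^\op}(U) \subseteq \End_\CO(U)$ as a map of $\End_{B^\op}(U)$-bimodules; that would amount to $B$ being \emph{separable} over $\CO$, which a block algebra of positive defect never is. (What is true, and what the paper uses, is the weaker statement that $B$ is relatively $\OP$-separable, i.e.\ $B\mid B\tenOP B$ as $B$-bimodules.) Without separability there is no reason at all for the inclusion to split as $\CO\Delta P$-modules, and indeed the final reduction in the paper's proof identifies $\beta$ in its distilled form with the structural map $B(\Delta Q)\to\End_k(B(\Delta Q))$, which is an injection but manifestly not split in any reasonable sense. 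Your ``Main obstacle'' paragraph acknowledges there is a delicate point here, but the proposed resolution invokes the same projectivity-of-$U$-over-$B$ reasoning and therefore does not close the gap.

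The paper's actual proof of the injectivity of $\beta$ argues quite differently: it decomposes $A$ as an $\OQ$-$\OP$-bimodule using Proposition~\ref{almostsourcealgebrafusion}, discards summands indexed by proper subgroups of $Q$ (which die under $\Br_{\Delta Q}$), invokes the $\CF$-stability of $V$ to see that all surviving twists ${_\varphi V}$ and ${_\psi V}$ have the same indecomposable summand $W$ with vertex $Q$, rewrites the problem via an adjunction as one about $\Hom_B(B, W^*\tenO W\tenO B)$, uses that $W^*\tenO W$ is $\CO$ plus permutation modules with smaller vertices, and finally lands on the structural homomorphism $B(\Delta Q)\to\End_k(B(\Delta Q))$. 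Each of these reductions involves a genuine idea (the fusion system constraint on summands of $A$, the $\CF$-stability of $V$, the structure of $W^*\tenO W$) that your proposal replaces with a single unsupported splitting claim. You would need to supply an argument along these lines (or some other genuine substitute) for the proof to be complete.
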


\begin{proof}
For $\varphi\in$ $\End_\CO(U)$, $y\in$ $Q$, and $u\in$ $U$ we have
$${^{\Delta y}\varphi}(u) = \Delta y \cdot\varphi(\Delta y^{-1}\cdot u) =
y\varphi(y^{-1}uy) y^{-1}\ ,$$
where $\Delta y=$ $(y,y)$. If $\varphi\in$ $\End_{B^\op}(U)$, then
in particular $\varphi$ commutes with the right action by $Q$, and
hence we have  ${^{\Delta y}\varphi}(u) = y\varphi(y^{-1}u)=$
${^{(y,1)}\varphi}(u)$, which shows that ${^{\Delta y}\varphi}$ is
again a $B^\op$-homomorphism. The algebra of $\Delta Q$-fixed points in
$\End_{B^\op}(U)$ is equal to $\End_{\OQ\tenO B^\op}(U)$.
The existence of a commutative diagram as in the statement is formal:
if $\varphi\in$  $\End_{\OQ\tenO B^\op}(U)$, then in particular
$b\cdot\varphi=$ $\varphi\cdot b$ for all $b\in$ $B$, hence for all $b\in$
$B^{\Delta Q}$, and applying $\Br_{\Delta Q}$ yields that the image of
$\varphi$ in $\End_\CO(U)(\Delta Q)$ commutes with the elements in
$B(\Delta Q)$. Since the upper horizontal map is a bimodule isomorphism,
it follows that the image of $\varphi$ in $\End_k(U')$ commutes with the
elements in $B(\Delta Q)$, hence lies in the subalgebra
$\End_{B(\Delta Q)^\op}(U')$.
In order to show that $\beta$ is injective, we first note that this
injectivity  does not make use of the left $A$-module structure of $U$
but only of the left $\OQ$-module structure. Thus we may decompose $U$
by decomposing $A$ as an $\OQ$-$\OP$-bimodule. By
\ref{almostsourcealgebrafusion}, every summand of $A$ as an
$\OQ$-$\OP$-bimodule is of the form $\OQ\tenOR{_\varphi{\OP}}$ for some
subgroup $R$ of $Q$ and some homomorphism $\varphi : R\to$ $P$
belonging to the fusion system $\CF$.
Using the appropriate version of the isomorphism \ref{diagbimodtensorproduct}
of $\OP$-$B$-modules
$\Ind^{P\times P}_{\Delta P}(V)\tenOP B\cong$ $V\tenO B$
it suffices therefore to show that
applying $\Br_{\Delta Q}$ to the inclusion  map
$$\Hom_{B^\op}(\OQ\tenOR{_\varphi{(V\tenO B)}},
\OQ\tenOS{_\psi{(V\tenO B)}}) \subseteq $$
$$\Hom_{\CO}(\OQ\tenOR{_\varphi{(V\tenO B)}},
\OQ\tenOS{_\psi{(V\tenO B)}}) $$
remains injective upon applying $\Br_{\Delta Q}$, where $R$, $S$ are 
subgroups of $Q$ and where $\varphi\in$ $\Hom_\CF(R,P)$, $\psi\in$ 
$\Hom_\CF(S,P)$. If one of $R$, $S$ is a proper subgroup of $Q$, then 
both sides vanish upon applying $\Br_{\Delta Q}$.
Thus it suffices to show that the map
$$\Hom_{B^\op}({_\varphi{(V\tenO B)}},
{_\psi{(V\tenO B)}})(\Delta Q) \to
\Hom_{\CO}({_\varphi{(V\tenO B)}},
{_\psi{(V\tenO B)}})(\Delta Q) $$
is injective, where $\varphi$, $\psi\in$ $\Hom_\CF(Q,P)$.
The summands of ${_\varphi{V}}$, ${_\psi{V}}$ with vertices smaller
than $Q$ yield summands of $V\tenO B$ which vanish on both sides upon
applying $\Br_{\Delta Q}$. The fusion stability of $V$ implies that
indecomposable summands with vertex $Q$ of ${_\varphi{V}}$, ${_\psi{V}}$
are all isomorphic to an indecomposable direct summand $W$ with vertex $Q$
of $\Res^P_Q(V)$. Thus it suffices to show that the map
$$\End_{B^\op}(W\tenO B)(\Delta Q)\to \End_\CO(W\tenO B)(\Delta Q)$$
is injective, where $W$ is an indecomposable direct summand of $\Res^P_Q(V)$
with vertex $Q$. Using the natural adjunction isomorphism
$$\End_\CO(W\tenO B)\cong \Hom_\CO(B, W^*\tenO W\tenO B)$$
it suffices to show that the map
$$\Hom_B(B, W^*\tenO W\tenO B)(\Delta Q) \to 
\Hom_\CO(B,W^*\tenO W\tenO B)(\Delta Q)$$
is injective. Now $W^*\tenO W$ is a direct sum of a trivial $\OQ$-module
$\CO$ and indecomposable  permutation $\OQ$-modules with vertices strictly
smaller than $Q$. Thus it suffices to show that the map
$$\End_{B^\op}(B)(\Delta Q) \to \End_\CO(B)(\Delta Q)$$
is injective. The canonical isomorphism $\End_{B^\op}(B)\cong$ $B$ yields
an isomorphism $\End_{B^\op}(B)(\Delta Q)\cong$ $B(\Delta Q)$.
Since $B$ has a $\Delta Q$-stable $\CO$-basis, it follows that 
$\End_\CO(B)(\Delta Q)\cong$ $\End_k(B(\Delta Q))$.
Using these isomorphisms, the last map is identified with the
structural homomorphism
$$B(\Delta Q) \to \End_k(B(\Delta Q))\ ,$$
which is clearly injective.
\end{proof}

\begin{Lemma} \label{UUstarsummands}
Let $W$ be an indecomposable direct summand of $U\tenB U^*$ or of
$U\tenOP U^*$. Then $W$ is isomorphic to a direct summand of $A\tenOQ A$
for some fully $\CF$-centralised subgroup $Q$ of $P$ such that 
$W(\Delta Q)\neq$ $\{0\}$.
In particular, $U\tenB U^*$ and $U\tenOP U^*$ are $p$-permutation
bimodules.
\end{Lemma}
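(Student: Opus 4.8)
The plan is to analyze the indecomposable summands of $U \tenB U^*$ and $U \tenOP U^*$ by unpacking the definition of $U$ and reducing everything to tensor products of $p$-permutation bimodules over source algebras. First I would record the structural description of $U$: using Proposition \ref{diagbimodtensorproduct} (in the form applied to $\OP$-$B$-bimodules and symmetrically to $A$-$\OP$-bimodules), one has isomorphisms $\Ind^{P\times P}_{\Delta P}(V)\tenOP B \cong V\tenO B$ as $\OP$-$B$-bimodules and $A\tenOP \Ind^{P\times P}_{\Delta P}(V) \cong A\tenO V$ (with the appropriate diagonal $\OP$-action) as $A$-$\OP$-bimodules. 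Consequently $U^*$ is, up to the usual duality bookkeeping, built from $A$, $B$, and $V^*$ in the mirrored way, so that $U\tenB U^*$ and $U\tenOP U^*$ are each isomorphic to a direct sum of bimodules of the shape $A\tenOP (V\tenO B \tenB B\tenO V^*)\tenOP A$ and $A\tenOP(V\tenO V^*)\tenOP A$ respectively, after cancelling $B\tenB B^* \cong B$ or $\OP\tenOP\OP \cong \OP$.

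Next I would use the endopermutation hypothesis on $V$: since $V\tenO V^*\cong\End_\CO(V)$ is a permutation $\OP$-module (for the diagonal $P$-action), each indecomposable summand of $U\tenOP U^*$ is a summand of $A\tenOP N\tenOP A$ where $N$ is an indecomposable permutation $\OP$-module, i.e.\ $N\cong \Ind^P_R(\CO)$ for some subgroup $R\le P$; and the same reduction handles $U\tenB U^*$ once one also decomposes $B$ as an $\OP$-$\OP$-bimodule via Proposition \ref{almostsourcealgebrafusion}, whose summands are of the form $\OP\tenOS {_\varphi\OP}$ with $\varphi\in\Hom_\CF(S,P)$. In either case, transitivity of induction collapses $A\tenOP \Ind^P_R(\CO)\tenOP A$ to a direct sum of bimodules isomorphic to direct summands of $A\tenOR A$, possibly with a twist $\varphi$ that — because $\varphi$ lies in $\CF$ and $A$ carries the fusion system $\CF$ — can be absorbed by Proposition \ref{almostsourcealgebrafusion}(i), so that up to $G$-conjugacy of the subgroup we land inside $A\tenOQ A$ for a subgroup $Q$ of $P$. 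Replacing $Q$ by an $\CF$-isomorphic fully $\CF$-centralised subgroup (using Proposition \ref{almostsourcealgebrafusion}(ii) to reroute the twist) we may take $Q$ fully $\CF$-centralised. Finally, the constraint $W(\Delta Q)\neq\{0\}$ is automatic for the $Q$ thus produced: if $W$ is a summand of $A\tenOQ A$ with strictly smaller relative defect, then by minimality one could have taken a smaller subgroup, so we choose $Q$ minimal with $W \mid A\tenOQ A$ and invoke Higman's criterion $(\End_\CO(W))(\Delta Q)\neq\{0\}$, which for the relevant bimodule summands is equivalent to $W(\Delta Q)\neq\{0\}$ since these are $p$-permutation bimodules. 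The ``in particular'' clause then follows because each summand of $A\tenOQ A$ is a $p$-permutation $\CO(P\times P)$-module (as $A$ is a $p$-permutation bimodule, being a direct summand of $\OGb$).

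The main obstacle I expect is the careful tracking of module structures through the chain of adjunction and cancellation isomorphisms — in particular making sure the twists $\varphi$ that appear when decomposing $B$ (or $A$) as an $\OP$-$\OP$-bimodule are genuinely $\CF$-morphisms so that they can be reabsorbed, and that the diagonal-action conventions for $V\tenO V^*$ versus the left/right $\Delta P$-action on $U$ match up. The second subtlety is the passage to a fully $\CF$-centralised $Q$: one has to make sure that twisting a summand of $A\tenOQ A$ along an $\CF$-isomorphism $Q\cong Q'$ with $Q'$ fully centralised preserves the non-vanishing of the Brauer construction, which is where Proposition \ref{almostsourcealgebrafusion}(ii) (the existence of ${_\varphi\OP}$ as a summand of $A$ when the target is fully centralised) does the real work. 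Everything else is bookkeeping with induction–restriction and the permutation-module decomposition of $\End_\CO(V)$.
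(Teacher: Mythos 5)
Your strategy coincides with the paper's: rewrite the middle factor as an $\OP$-$\OP$-bimodule via Proposition \ref{diagbimodtensorproduct}, decompose it into summands $\OP\tenOS{_\varphi{\OP}}$ with $\varphi$ a morphism in $\CF$, use the $\CF$-stability of $V$ to see that $V\tenO V^*$ restricted to the twisted diagonal subgroups $\Delta_\varphi S$ is a permutation module, land in $A\tenOR{_\varphi{A}}$, and finally pass to a minimal, fully $\CF$-centralised subgroup to get the non-vanishing of the Brauer construction. Two points need repair. First, your identification of $U\tenOP U^*$ with a sum of bimodules $A\tenOP(V\tenO V^*)\tenOP A$ is incorrect: tensoring over $\OP$ does not cancel the two copies of $B$, so the middle term is $(V\tenO V^*)\tenO(B\tenOP B)$ rather than $V\tenO V^*$; the paper in fact argues in the opposite direction, observing that $U\tenB U^*$ is a summand of $U\tenOP U^*$ (since $B$ is a summand of $B\tenOP B$) and then treating only $U\tenOP U^*$. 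This slip is harmless because your treatment of the case with a single $B$ in the middle already contains all the ideas needed for $B\tenOP B$. Second, to move a summand $W$ of $A\tenOR{_\varphi{A}}$ into $A\tenOT A$ for a fully $\CF$-centralised $T$, Proposition \ref{almostsourcealgebrafusion}(ii) alone is rather thin: the paper instead uses the indecomposability of $W$ to cut down to $Ar\tenOR{_\varphi{(mA)}}$ for primitive local idempotents $r$, $m$, and then applies Proposition \ref{almostsourcealgebrabimodfusion} to transport these idempotents along an $\CF$-isomorphism $T\cong R$. That same set-up delivers the last step: minimality of $R$ (hence of $T$) makes $\Delta T$ a vertex, with trivial source, of the corresponding $\CO(G\times G)$-module by Proposition \ref{vertexsourcealgebra}, which gives $W(\Delta T)\neq\{0\}$ — this is the precise form of your appeal to Higman's criterion, and it is worth noting that the equivalence you invoke between $(\End_\CO(W))(\Delta Q)\neq\{0\}$ and $W(\Delta Q)\neq\{0\}$ uses that $W$ is a $p$-permutation module, which at that stage you have already established since $A\tenOQ A$ has a $\Delta Q$-stable, hence $P\times P$-stable, basis.
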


\begin{proof}
Since $B$ is isomorphic to a direct summand of $B\tenOP B$, it follows
that $U\tenB U^*$ is isomorphic to a direct summand of $U\tenOP U^*$.
Thus it suffices to prove the statement for an indecomposable direct
summnad $W$ of $U\tenOP U^*$.
Using the isomorphisms from \ref{diagbimodtensorproduct},
we get isomorphisms as $\OP$-$\OP$-bimodules
$$\Ind^{P\times P}_{\Delta P}(V^*)\tenOP B\tenOP B\tenOP
\Ind^{P\times P}_{\Delta P}(V)\cong 
V^*\tenO B\tenOP B \tenO V \cong (V\tenO V^*)\tenO (B\tenOP B)\ ,$$
where the right side is to be understood as a tensor product of
two $\OP$-$\OP$-bimodules with the above conventions.
Every indecomposable summand of $B\tenOP B$ as an $\OP$-$\OP$-bimodule
is isomorphic to $\OP \tenOQ {_\varphi{\OP}}\cong$
$\Ind^{P\times P}_{\Delta_\varphi Q}(\CO)$ for some subgroup
$Q$ of $P$ and some morphism $\varphi : Q\to P$ in $\CF$, where
$\Delta_\varphi Q=$ $\{(u,\varphi(u))\ |\ u\in Q\}$. Thus
any indecomposable direct summand of $(V\tenO V^*)\tenO (B\tenOP B)$ is
isomorphic to a direct summand of an $\OP$-$\OP$-bimdoule
of the form $\Ind^{P\times P}_{\Delta_\varphi Q}(V\tenO V^*)$.
The restriction to $\Delta_\varphi Q$ of $V\tenO V^*$ is
a permutation module thanks to the stability of $V$, and hence
the indecomposable direct summands of
$\Ind^{P\times P}_{\Delta_\varphi Q}(V\tenO V^*)$ are
of the form $\OP\tenOR {_\varphi{\OP}}$, where $R$ is a subgroup
of $Q$ and where we use abusively the same letter $\varphi$ for the
restriction of $\varphi$ to any such subgroup.
Thus $W$ is isomorphic to a direct summand of $A\tenOR {_\varphi{A}}$, 
with $R$ and $\varphi$ as before. Set $S=$ $\varphi(R)$.
The indecomposability of $W$ implies
that $W$ is isomorphic to a direct summand of $Ar\tenOR {_\varphi{mA}}$
for some primitive idempotent $r$ in $A^{\Delta R}$ and some primitive
isempotent $m$ in $A^{\Delta S}$. By choosing
$R$ minimal, we may assume that $r$, $m$ belong to local points of $R$ and 
$S$ on $A$, respectively. Let $T$ be a fully $\CF$-centralised subgroup of 
$P$ and let $\psi : T\to$ $R$ be an isomorphism in $\CF$. Then, by
\ref{almostsourcealgebrabimodfusion}, there are primitive idempotents
$n$, $s$ in $A^{\Delta T}$ such that $An\cong$ $Am_\psi$ as $A$-$\OT$-bimodules
and $sA\cong$ ${_{\psi\circ\varphi}{rA}}$ as $\OT$-$A$-bimodules. Thus
$Y$ is isomorphic to a direct summand of $A\tenOT A$. The minimality of $R$, hence
of $T$, implies that $\Delta T$ is a vertex of $\OG i\tenA W\tenB j\OH$, viewed
as an $\CO(G\times H)$-module, and Proposition \ref{vertexsourcealgebra}
implies that a source, which has just shown to be trivial, is a summand of
$W$ restricted to $\Delta T$, which implies that $W(\Delta T)\neq$ $\{0\}$. 
\end{proof}

\begin{Proposition} \label{AtoMMdualsplits}
Let $M$ be an indecomposable direct summand of the $A$-$B$-bimodule $U$.
The following statements are equivalent.

\smallskip\noindent (i)
$A$ is isomorphic to a direct summand of the $A$-$A$-bimodule $M\tenB M^*$.

\smallskip\noindent (ii)
$A$ is isomorphic to a direct summand of the $A$-$A$-bimodule $M\tenOP M^*$.

\smallskip\noindent (iii)
$(M\tenB M^*)(\Delta P)\neq$ $\{0\}$.

\smallskip\noindent (iv)
$(M\tenOP M^*)(\Delta P)\neq$ $\{0\}$.
\end{Proposition}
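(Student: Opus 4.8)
The plan is to prove the cycle of implications (i) $\Rightarrow$ (iii), (iii) $\Rightarrow$ (iv), (iv) $\Rightarrow$ (ii), (ii) $\Rightarrow$ (i), using as the main tool the description of the summands of $U\tenB U^*$ and $U\tenOP U^*$ provided by Lemma \ref{UUstarsummands}, together with the basic facts about almost source algebras recalled in Section 2.

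First I would observe that the implications (i) $\Rightarrow$ (ii) and (ii) $\Rightarrow$ (iv) are essentially free: $M\tenB M^*$ is a direct summand of $M\tenOP M^*$ (since $B$ is a direct summand of $B\tenOP B$ as a $B$-$B$-bimodule, tensoring $M^*\tenB - \tenB M$ appropriately), which also gives (iii) $\Rightarrow$ (iv); and $A(\Delta P)\neq\{0\}$ because $\Br_{\Delta P}(i)\neq 0$ by definition of an almost source idempotent, which gives (ii) $\Rightarrow$ (iv) as well as (i) $\Rightarrow$ (iii). So the substance lies in going back from the Brauer-quotient non-vanishing conditions (iii), (iv) to the splitting conditions (i), (ii). For this I would use Lemma \ref{UUstarsummands}: every indecomposable summand $W$ of $U\tenOP U^*$ (equivalently, of $M\tenOP M^*$, which is a summand of $U\tenOP U^*$) is isomorphic to a summand of $A\tenOQ A$ for some fully $\CF$-centralised $Q\leq P$, with $W(\Delta Q)\neq\{0\}$. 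The point is that among all such $W$, the only one whose Brauer quotient at $\Delta P$ can be nonzero is one with $Q=P$, i.e. a summand of $A\tenOP A$, and the indecomposable summands of $A\tenOP A$ with nonzero Brauer quotient at $\Delta P$ are precisely the summands isomorphic to $A$ itself (using that $A$ is a summand of $A\tenOP A$ with $A(\Delta P)\neq 0$, and that any summand of $A\tenOP A$ of the form $\OP\tenOR{_\varphi\OP}$ with $R<P$ vanishes at $\Delta P$). Thus if $(M\tenOP M^*)(\Delta P)\neq\{0\}$, some indecomposable summand $W$ of $M\tenOP M^*$ has $W(\Delta P)\neq\{0\}$, forcing $W\cong A$; this gives (iv) $\Rightarrow$ (ii). The same argument, applied with $U\tenB U^*$ in place of $U\tenOP U^*$ (Lemma \ref{UUstarsummands} covers both cases), shows that if $(M\tenB M^*)(\Delta P)\neq\{0\}$ then $A$ is a summand of $M\tenB M^*$, i.e. (iii) $\Rightarrow$ (i).

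The remaining implication (ii) $\Rightarrow$ (i) needs slightly more care, since passing from $M\tenOP M^*$ back to $M\tenB M^*$ is not automatic. Here I would argue that if $A$ is a summand of $M\tenOP M^*$, then in particular $(M\tenOP M^*)(\Delta P)\neq\{0\}$, and since $M\tenOP M^* = M\tenB B\tenOP B\tenB M^*$ and $B$ is a summand of $B\tenOP B$ with complementary summand all of whose indecomposable constituents, by \ref{almostsourcealgebrafusion}, are of the form $\OP\tenOQ{_\varphi\OP}$ with $Q<P$ or with $\varphi$ a non-identity automorphism — in all of which cases the corresponding summand of $M\tenOP M^*$ vanishes at $\Delta P$ — the $\Delta P$-Brauer quotient of $M\tenOP M^*$ equals that of $M\tenB M^*$. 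Hence $(M\tenB M^*)(\Delta P)\neq\{0\}$, which is (iii), and then (iii) $\Rightarrow$ (i) finishes the loop. Alternatively one can just note that the four conditions have now been linked into a single cycle via (i) $\Rightarrow$ (iii) $\Rightarrow$ (i) and (ii) $\Rightarrow$ (iv) $\Rightarrow$ (ii) together with the trivial (i) $\Rightarrow$ (ii) $\Rightarrow$ (iv) $\Rightarrow$ (ii) $\Rightarrow$ (i)-type shortcuts, so it suffices to also show one implication from the "$\OP$" world to the "$B$" world; the cleanest is the Brauer-quotient comparison just described.

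The main obstacle, I expect, is the bookkeeping in identifying exactly which indecomposable summands of $A\tenOP A$ (respectively of $A\tenOQ A$) survive the Brauer construction at $\Delta P$, and ensuring that "$A$ is a summand of $M\tenB M^*$" really follows from "$A$ is a summand of $M\tenOP M^*$ and $(M\tenB M^*)(\Delta P)\neq 0$" rather than just from the weaker non-vanishing. This is handled by the observation that $A$ is an indecomposable-summand-free statement only up to the multiplicity-one behaviour of the trivial source component: $A$ as an $A$-$A$-bimodule has, by \ref{almostsourcealgebrafusion}, the diagonal $\OP$-$\OP$-bimodule $\OP$ (i.e. $_{\mathrm{id}}\OP$) as a summand with multiplicity one, and it is this component that detects $\Delta P$; once an indecomposable $W$ with $W(\Delta P)\neq 0$ is produced inside $M\tenB M^*$, comparing it against the known decomposition of $A$ forces $W\cong A$. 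I would keep the write-up short by invoking Lemma \ref{UUstarsummands} and \ref{almostsourcealgebrafusion} for all the structural input and only spelling out the Brauer-quotient vanishing of the "off-diagonal" summands.
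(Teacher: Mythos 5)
Your cycle of easy implications is fine, and you correctly use Lemma \ref{UUstarsummands} to conclude that an indecomposable summand $W$ of $M\tenOP M^*$ with $W(\Delta P)\neq\{0\}$ must be a trivial-source summand of $A\tenOP A$ with vertex $\Delta P$ (forcing $Q=P$). The gap is in the next step: you assert that any such $W$ must be isomorphic to $A$, justifying this by the multiplicity-one occurrence of $\OP$ in $A$ as an $\OP$-$\OP$-bimodule and a vague ``comparing against the known decomposition of $A$''. This does not follow. The Brauer quotient $(\OGb\tenOP\OGb)(\Delta P)$ is a much larger $kN_{G\times G}(\Delta P)$-module than $\OGb(\Delta P)\cong kC_G(P)\Br_{\Delta P}(b)$, so $A\tenOP A$ generally has several non-isomorphic indecomposable $A$-$A$-bimodule summands with vertex $\Delta P$ and trivial source, corresponding to distinct projective indecomposable multiplicity modules. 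Knowing that \emph{some} such summand lies in $M\tenOP M^*$ does not single out $A$. This is exactly why the paper does not argue this way: instead it uses the interior-algebra structure, i.e.\ the structural homomorphism $A\to\End_{\OP^\op}(M)$, and invokes Puig's criterion (the Corollary to Propositions \ref{splitinjective1} and \ref{splitinjective2} in the Appendix, following \cite[Proposition 3.8]{PuigScopes}), whose hypothesis is that the Brauer quotient is nonzero \emph{and projective as a $kZ(P)$-module}; the projectivity is supplied by the freeness of the left and right $\OP$-actions coming from Lemma \ref{UUstarsummands}, and is genuinely needed, not just nonvanishing. Your argument uses none of this and therefore cannot reach the conclusion.

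Your route for (ii) $\Rightarrow$ (i) has a related problem. You want to deduce $(M\tenB M^*)(\Delta P)\neq\{0\}$ from $(M\tenOP M^*)(\Delta P)\neq\{0\}$ by decomposing the middle factor $B\tenOP B$ as an $\OP$-$\OP$-bimodule via Proposition \ref{almostsourcealgebrafusion}; but to split $M\tenOP M^* = M\tenB(B\tenOP B)\tenB M^*$ you need a $B$-$B$-bimodule decomposition of $B\tenOP B$, not an $\OP$-$\OP$-bimodule one, and the complement of $B$ in $B\tenOP B$ as a $B$-$B$-bimodule can again have summands with vertex $\Delta P$, so its $\Delta P$-Brauer quotient need not vanish. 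The paper's (ii) $\Rightarrow$ (i) avoids this entirely: once $A$ is a bimodule summand of $\End_{\OP^\op}(M)$, the canonical map $A\to\End_{\OP^\op}(M)$ is split injective as a bimodule map (\cite[Lemma 4]{KeLiHHbound}), and since this map factors through the subalgebra $\End_{B^\op}(M)$, the map $A\to\End_{B^\op}(M)\cong M\tenB M^*$ is split injective as well. That factorization argument is the missing idea; it replaces any Brauer-quotient bookkeeping.
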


\begin{proof}
Since $B$ is isomorphic to a direct summand of the $B$-$B$-bimodule
$B\tenOP B$, it follows that $M\tenB M^*$ is isomorphic to a direct summand of
$M\tenOP M^*$. This yields the implications (i) $\Rightarrow$ (ii) and (iii)
$\Rightarrow$ (iv). Since $A(\Delta P)\neq$ $\{0\}$, we trivially have the
implications (i) $\Rightarrow$ (iii) and (ii) $\Rightarrow$ (iv).
Since $M$ is finitely generated projective as a left $A$-module and as a right
$B$-module (hence also as a right $\OP$-module), we have
$M\tenB M^*\cong$ $\End_{B^\op}(M)$, and $M\tenOP M^*\cong$ $\End_{(\OP)^\op}(M)$.
It is well-known that if $A$ is isomorphic to a direct
summand of $M\tenOP M^*$, then the canonical algebra homomorphism
$A\to$ $\End_{(\OP)^\op}(M)$ is split injective as a bimodule homomorphism
(see e.g. \cite[Lemma 4]{KeLiHHbound} for a proof).
This algebra homomorphism factors through the inclusion $\End_{B^\op}(M)\subseteq$
$\End_{(\OP)^\op}(M)$, which implies that the canonical algebra
homomorphism $A\to$ $\End_{B^\op}(M)$ is also split injective as a bimodule
homomorphism. This shows the implication (ii) $\Rightarrow$ (i).
Suppose that (iv) holds. 
Set $Y=$ $\OG i\tenA M$.
Then $Y\tenOP Y^*\cong$ $\OG i\tenA M\tenOP M^* \tenA i\OG$.
It follows from  \ref{UUstarsummands} that $Y\tenOP Y^*$ is a
permutation $\CO(P\times P)$-module on which $\OP$ acts freely on the 
left and on the right and that
$\OP$ is isomorphic to a direct summand of $Y\tenOP Y^*\cong$ 
$\End_{\OP^\op}(Y)$. Thus
$(\End_{\OP^\op}(X))(\Delta P)$ is nonzero projective as a left or
right $kZ(P)$-module. It follows from \cite[Proposition 3.8]{PuigScopes}
that $\OGb$ is isomorphic to a direct summand of $Y\tenOP Y^*$ as an
$\OGb$-$\OGb$-bimodule. Multiplying by $i$ on the left and right implies
that $A$ is isomorphic to a direct summand of $M\tenOP M^*$. 
Thus (iv) implies (ii), completing the proof.
\end{proof}

\begin{Proposition} \label{BtoMdualMsplits}
Let $M$ be an indecomposable direct summand of the $A$-$B$-bimodule $U$.
Then $A$ is isomorphic to a direct summand of the $A$-$A$-bimodule
$M\tenB M^*$ if and only if $B$ is isomorphic to a direct summand of
the $B$-$B$-bimodule $M^*\tenA M$. 
\end{Proposition}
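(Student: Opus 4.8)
The plan is to exploit the symmetry of the situation together with the equivalent characterisations of the splitting established in Proposition \ref{AtoMMdualsplits}. By that proposition, $A$ is isomorphic to a direct summand of $M\tenB M^*$ if and only if $(M\tenB M^*)(\Delta P)\neq\{0\}$, and similarly (applying the same proposition with the roles of $A$ and $B$, of $G$ and $H$, and of $M$ and $M^*$ interchanged) $B$ is isomorphic to a direct summand of $M^*\tenA M$ if and only if $(M^*\tenA M)(\Delta P)\neq\{0\}$. So the statement reduces to showing that $(M\tenB M^*)(\Delta P)\neq\{0\}$ if and only if $(M^*\tenA M)(\Delta P)\neq\{0\}$. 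Equivalently, since $M$ is finitely generated projective on both sides, this is the assertion that $\End_{B^\op}(M)(\Delta P)\neq\{0\}$ iff $\End_A(M)(\Delta P)\neq\{0\}$; and by Higman's criterion, as recalled in the excerpt, $\End_{B^\op}(M)(\Delta P)\neq\{0\}$ holds precisely when $\Res_{\Delta P}(M)$ (as an $\CO\Delta P$-module via the twisted diagonal action) has an indecomposable summand with vertex $\Delta P$, and the same criterion applied to $M$ as a right $A$-module, i.e. as a left $A^\op$-module, characterises $\End_A(M)(\Delta P)\neq\{0\}$ the same way.

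First I would make precise that both conditions are governed by one and the same object, namely the $\CO\Delta P$-module obtained by restricting $M$ (equivalently $U$) along the twisted diagonal embedding $\Delta P\hookrightarrow P\times P$ described in Proposition \ref{endoinducedsourcebimod}. The left $A$-module structure and the right $B$-module structure on $M$ both restrict to the same $\CO P$-action through $\OP\to A$ on the left and $\OP\to B$ on the right, combined via $(u,u)\mapsto u(-)u^{-1}$; and $M$, being a summand of $U$, is by Proposition \ref{endoinducedsourcebimod} an endopermutation $\CO\Delta P$-module. Thus $M\tenB M^*\cong\End_{B^\op}(M)$ and $M^*\tenA M\cong\End_A(M)$ as modules for this common $\CO\Delta P$-algebra structure, and $\Br_{\Delta P}$ applied to either gives the Brauer quotient of (a quotient algebra of) $\End_\CO(M)$. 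The key point is that $\End_{B^\op}(M)(\Delta P)$, $\End_A(M)(\Delta P)$, and $\End_\CO(M)(\Delta P)$ are all nonzero or all zero simultaneously: indeed each of them is nonzero iff $\Res_{\Delta P}(M)$ has a summand with vertex $\Delta P$, because $\End_{B^\op}(M)$ and $\End_A(M)$ are both $\Delta P$-subalgebras of $\End_\CO(M)$ containing the identity, so their Brauer quotients embed into $\End_\CO(M)(\Delta P)$ by the argument of Theorem \ref{EndBODeltaQ} (with $Q=P$), while conversely if $M$ has a summand $W$ of vertex $\Delta P$ on restriction, then $W$ is a common summand witnessing nonvanishing on all three.

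Concretely I would argue: $(M\tenB M^*)(\Delta P)\neq\{0\}$ $\iff$ $\End_{B^\op}(M)(\Delta P)\neq\{0\}$ $\iff$ (by Higman, since $M$ is a right $\OP$-summand of itself and $\OP$-$B$ induction/restriction) $\End_\CO(M)(\Delta P)\neq\{0\}$, using that $M$ as a right $B$-module is relatively $\OP$-projective (which holds because $B$ is a summand of $B\tenOP B$, as recalled before Proposition \ref{vertexsourcealgebra}) so that the $\Delta P$-vertex summands of $M|_{\OP}$ and of $M$ as a right $B$-module match up; and symmetrically $(M^*\tenA M)(\Delta P)\neq\{0\}$ $\iff$ $\End_\CO(M)(\Delta P)\neq\{0\}$, using that $M$ as a left $A$-module is relatively $\OP$-projective. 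Combining these two chains of equivalences gives the claim, and then Proposition \ref{AtoMMdualsplits} (used on both sides) upgrades the Brauer-quotient statements to the direct-summand statements in the proposition.

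The main obstacle I anticipate is justifying the middle equivalence $\End_{B^\op}(M)(\Delta P)\neq\{0\}\iff\End_\CO(M)(\Delta P)\neq\{0\}$ cleanly — that is, passing between the vertex of $M$ as a right $B$-module and as a right $\OP$-module. The $(\Rightarrow)$ direction is immediate from Theorem \ref{EndBODeltaQ} (the injectivity of $\beta$ with $Q=P$). For $(\Leftarrow)$ one needs that relative $\OP$-projectivity of $M_B$ forces any $\Delta P$-vertex summand of $M|_{\OP}$ to already be "seen" by the $B$-module structure; this is exactly the content of the standard fact (Proposition \ref{vertexsourcealgebra} in spirit, or the Broué–Higman equivalence cited in its proof) that vertices computed relative to the subalgebra $\OP\subseteq B$ agree with Green vertices. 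I would cite Proposition \ref{vertexsourcealgebra} or the general nonsense of \cite[Theorem 6.8]{BroueHigman} for this and not belabour it.
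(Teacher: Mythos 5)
Your route is genuinely different from the paper's. The paper argues by contradiction using a tensor product trick: assuming $A$ is a summand of $M\tenB M^*$ but $B$ is not one of $M^*\tenA M$, Lemma~\ref{UUstarsummands} (applied with the roles of $A$ and $B$, and of $V$ and $V^*$, exchanged) shows that every summand of $M^*\tenA M$ comes from some $B\tenOQ B$ with $Q$ strictly contained in $P$; hence $M\tenB M^*\tenA M\tenB M^*$ is built from summands of $M\tenOQ M^*$ with $Q<P$ and so has vanishing $\Delta P$-Brauer quotient, contradicting the fact that $A\cong A\tenA A$ is a summand of $M\tenB M^*\tenA M\tenB M^*$ and $A(\Delta P)\neq\{0\}$.

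Your plan, to show that both splitting conditions are equivalent to $\End_\CO(M)(\Delta P)\neq\{0\}$, contains a real gap at precisely the point you flag as the anticipated obstacle, and the references you offer do not close it. Theorem~\ref{EndBODeltaQ} gives the injectivity $\End_{B^\op}(M)(\Delta P)\hookrightarrow\End_\CO(M)(\Delta P)$, so $\End_{B^\op}(M)(\Delta P)\neq\{0\}$ implies $\End_\CO(M)(\Delta P)\neq\{0\}$. But the converse is what you need on one side of your chain, and it does not follow from Higman's criterion applied blindly, nor from relative $\OP$-projectivity of $M$ as a one-sided $B$-module. If $W$ is an indecomposable $\CO\Delta P$-summand of $\Res_{\Delta P}(M)$ with vertex $\Delta P$, the projection $M\to W$ is only an idempotent in $\End_\CO(M)^{\Delta P}$, and there is no reason a priori for it to lie in $\End_{B^\op}(M)^{\Delta P}$; so $W$ does not \emph{witness} nonvanishing of $\End_{B^\op}(M)(\Delta P)$, contrary to what you assert. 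Proposition~\ref{vertexsourcealgebra} and the Brou\'e--Higman equivalence pertain to vertices computed over $\OP\subseteq B$ for one-sided module structures, and they do not control the $\Delta P$-Brauer quotient of the bimodule $\End_{B^\op}(M)$. The implication $\End_\CO(M)(\Delta P)\neq\{0\}\Rightarrow\End_{B^\op}(M)(\Delta P)\neq\{0\}$ is true in this situation, but it needs the structural input from the fusion stability of $V$: one must show that the vertex of $M$ as an $A$-$B$-bimodule is a twisted diagonal subgroup, hence forced to equal $\Delta P$ once it contains $\Delta P$, and then that a $\Delta P$-vertex summand extends to a genuine $\OP$-$\OP$-bimodule summand of $M$. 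This is essentially the argument the paper performs later in the $(iii)\Rightarrow(i)$ step of Proposition~\ref{blockAtoMMdualsplits}, and is considerably more involved than an appeal to Higman's criterion. The paper's contradiction argument sidesteps this entirely, which is why it is the chosen route. If you want to pursue your approach, make the claim $\End_\CO(M)(\Delta P)\neq\{0\}\Leftrightarrow\End_{\OP^\op}(M)(\Delta P)\neq\{0\}$ into an explicit lemma and prove it using the vertex analysis of indecomposable $\OP$-$\OP$-bimodule summands of $U$ from Lemma~\ref{UUstarsummands} and the $\CF$-stability of $V$; as written, the step is not justified.
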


\begin{proof}
Suppose that $A$ is isomorphic to a direct summand of $M\tenB M^*$, but
that $B$ is not isomorphic to a direct summand of
$M^*\tenA M$. It follows from Lemma \ref{UUstarsummands}, applied
to $B$, $A$, $V^*$ instead of $A$, $B$, $V$, respectively, that
$M^*\tenB M$ is a direct sum of summands of bimodules of the form
$B\tenOQ B$, with $Q$ running over a family of proper subgroups of $P$.
Thus $M\tenB M^*\tenA M\tenB M^*$ is a direct sum of summands of
bimodules of the form $M\tenOQ M^*$, with $Q$ running over a family
of proper subgroups of $P$. In particular, we have
$(M\tenB M^*\tenA M\tenB M^*)(\Delta P)=$ $\{0\}$.
But $A\cong$ $A\tenA A$ is a summand
of  $M\tenB M^*\tenA M\tenB M^*$, hence
$(M\tenB M^*\tenA M\tenB M^*)(\Delta P)\neq$ $\{0\}$. This contradiction
shows that $B$ is isomorphic to a direct summand of $M^*\tenA M$.
Exchanging the roles of $A$ and $B$ yields the converse.
\end{proof}

In particular, if the equivalent statements in Proposition 
\ref{BtoMdualMsplits} hold, then the algebras $A$, $B$ are separably 
equivalent (cf. \cite[Definition 3.1]{LinHecke}).

\begin{Proposition} \label{blockAtoMMdualsplits}
Let $M$ be an indecomposable direct summand of the $\OGb$-$\OHc$-bimodule
$X$. The following are equivalent.

\smallskip\noindent (i)
$\OGb$ is isomorphic to a direct summand of the $\OGb$-$\OGb$-bimodule
$M\tenOHc M^*$.

\smallskip\noindent (ii)
$\OHc$ is isomorphic to a direct summand of the $\OHc$-$\OHc$-bimodule
$M^*\tenOGb M$.

\smallskip\noindent (iii)
$M$ has vertex $\Delta P$. 

\smallskip\noindent
If these equivalent conditions hold, then $V$ is an $\CO\Delta P$-source 
of $M$.
\end{Proposition}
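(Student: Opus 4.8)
The plan is to transport the two block-level conditions (i) and (ii) to the source algebras $A$ and $B$, where Propositions~\ref{AtoMMdualsplits} and~\ref{BtoMdualMsplits} do the work, to read the vertex of $M$ off the source algebra via Proposition~\ref{vertexsourcealgebra}, and to match the two sides by a short Brauer‑construction argument using that $U$ is an endopermutation $\CO\Delta P$‑module. First, write $N=iMj$ for the indecomposable direct summand of $U$ corresponding to $M$ under the Morita equivalence between $\OGb\tenO\OHc^\op$ and $A\tenO B^\op$ that sends an $\OGb$-$\OHc$-bimodule to its image under multiplication by $i$ on the left and by $j$ on the right. This equivalence sends the regular bimodules $\OGb$, $\OHc$ to $A$, $B$, and is compatible with tensor products of bimodules, so it carries $M\tenOHc M^*$ to $N\tenB N^*$ and $M^*\tenOGb M$ to $N^*\tenA N$. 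Hence (i) is equivalent to ``$A$ is isomorphic to a direct summand of $N\tenB N^*$'' and (ii) to ``$B$ is isomorphic to a direct summand of $N^*\tenA N$''; these are equivalent by Proposition~\ref{BtoMdualMsplits}, which proves (i)$\Leftrightarrow$(ii), and by Proposition~\ref{AtoMMdualsplits} applied to $N$ they are also equivalent to $(N\tenOP N^*)(\Delta P)\neq\{0\}$.

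Next I would deal with the vertex. Applying Proposition~\ref{diagbimodtensorproduct} twice, and using that $\OG i$ is a direct summand of $\OG$ as an $\OG$-$\OP$-bimodule and $j\OH$ a direct summand of $\OH$ as an $\OP$-$\OH$-bimodule, one sees that $X$ is a direct summand of $\Ind^{G\times H}_{\Delta P}(V)$; in particular $M$ has a vertex contained in $\Delta P$, and since $V$ is indecomposable, if this vertex equals $\Delta P$ then $V$ is an $\CO\Delta P$-source of $M$ — which already proves the last assertion. To see that (iii) is equivalent to the conditions of the first paragraph, I would first observe that $i\ten j^\op$ is an almost source idempotent of the block $b\ten c^\op$ of $\CO(G\times H)$, with defect group $P\times P$ and associated source algebra $A\tenO B^\op$: for a subgroup $D\leq P\times P$ with projections $D_1\leq P\leq G$ and $D_2\leq P\leq H$ one has $C_{G\times H}(D)=C_G(D_1)\times C_H(D_2)$ and $\Br_{\Delta D}(i\ten j^\op)=\Br_{\Delta D_1}(i)\ten\Br_{\Delta D_2}(j)^\op$, which lies in a single block of $kC_{G\times H}(D)$ since $i$ and $j$ are almost source idempotents. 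Proposition~\ref{vertexsourcealgebra}, applied to $(G\times H,\ b\ten c^\op,\ P\times P,\ i\ten j^\op)$ and the indecomposable module $M$, then shows that $M$ has vertex $\Delta P$ if and only if $\Res_{\Delta P}(N)$ has an indecomposable direct summand of vertex $\Delta P$. By Proposition~\ref{endoinducedsourcebimod}, $U$ is an endopermutation $\CO\Delta P$-module with $V$ as its cap, so, being a summand of $U$, the $\CO\Delta P$-module $N$ decomposes as $V^{\oplus b}\oplus N_0$ with $b\geq 0$ and $N_0$ a direct sum of indecomposables of vertex strictly smaller than $\Delta P$. Since $V$ is capped we have $V(\Delta P)\neq\{0\}$, while $N_0(\Delta P)=\{0\}$, so $N(\Delta P)\cong V(\Delta P)^{\oplus b}$; thus (iii) holds if and only if $b\geq 1$, if and only if $N(\Delta P)\neq\{0\}$.

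It remains to prove $N(\Delta P)\neq\{0\}\Leftrightarrow(N\tenOP N^*)(\Delta P)\neq\{0\}$, which closes the chain of equivalences. If $b\geq 1$, the image of $\Id_N$ under the canonical algebra homomorphism $\End_\CO(N)(\Delta P)\to\End_k(N(\Delta P))$ is $\Id_{N(\Delta P)}\neq 0$; since this homomorphism factors through the $\Delta P$-equivariant inclusion $N\tenOP N^*=\End_{\OP^\op}(N)\hookrightarrow\End_\CO(N)$, the class of $\Id_N$ is already nonzero in $(N\tenOP N^*)(\Delta P)$. If instead $b=0$, then $\Res_{\Delta P}(N)$ has all its indecomposable summands of vertex strictly smaller than $\Delta P$, hence is relatively projective for the proper subgroups of $\Delta P$; so is then $\End_\CO(N)=N\tenO N^*$, and so is its $\CO\Delta P$-direct summand $N\tenOP N^*=\End_{\OP^\op}(N)$ (using that $N$ is projective as a right $\OP$-module), whence $(N\tenOP N^*)(\Delta P)=\{0\}$. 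Together with the first two paragraphs this gives (i)$\Leftrightarrow$(ii)$\Leftrightarrow$(iii) and the statement on the source.

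The step I expect to be the main obstacle is the vertex bookkeeping in the second paragraph: relating the vertex of $M$ as an $\CO(G\times H)$-module to relative‑projectivity data of $N$ over the source algebra, for which it is important to isolate the fact that $i\ten j^\op$ is an almost source idempotent of $b\ten c^\op$ so that Proposition~\ref{vertexsourcealgebra} applies verbatim. One must also keep track of the endopermutation structure (rather than a mere $p$-permutation one) in the Brauer‑construction isomorphisms, which is why the argument goes through the decomposition $N\cong V^{\oplus b}\oplus N_0$ and through $N(\Delta P)$ instead of trying to compute $(M\tenOHc M^*)(\Delta P)$ directly. An alternative to the third paragraph is to invoke Lemma~\ref{UUstarsummands}, which already identifies $N\tenOP N^*$ as a $p$-permutation bimodule whose indecomposable summands with nonzero Brauer quotient at $\Delta P$ are summands of $A\tenOP A$, and to combine this with Proposition~\ref{AtoMMdualsplits}.
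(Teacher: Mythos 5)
Your outline reproduces the right overall shape of the argument (transport (i), (ii) to the almost source algebra level, get (i)$\Leftrightarrow$(ii) from Proposition~\ref{BtoMdualMsplits}, identify (i) with $(N\tenOP N^*)(\Delta P)\neq\{0\}$ via Proposition~\ref{AtoMMdualsplits}, and recover the source from $M$ being a summand of $\Ind^{G\times H}_{\Delta P}(V)$). But the pivotal step in your second paragraph is stated with too much confidence and hides exactly the work that the paper's proof actually does.

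You assert that Proposition~\ref{vertexsourcealgebra}, applied to $G\times H$, $b\ten c^\op$, $P\times P$, $i\ten j^\op$, ``shows that $M$ has vertex $\Delta P$ if and only if $\Res_{\Delta P}(N)$ has an indecomposable direct summand of vertex $\Delta P$.'' The ``if'' direction is fine by ordinary Green theory. For the ``only if'' direction, however, Proposition~\ref{vertexsourcealgebra} produces a minimal subgroup $P'\leq P\times P$ with $N\mid(A\tenO B^\op)\ten_{\CO P'}V'$ and exhibits an indecomposable summand of $\Res_{P'}(N)$ with vertex $P'$. It does \emph{not} say that $P'=\Delta P$: the minimal $P'$ is some $(G\times H)$-conjugate of $\Delta P$ sitting inside $P\times P$, and a priori it is a twisted diagonal $\Delta_{\varphi,\psi}P=\{(\varphi(u),\psi(u))\ |\ u\in P\}$ with $\varphi,\psi\in\Aut_\CF(P)$. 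Passing from a vertex-$P'$ summand of $\Res_{P'}(N)$ to a vertex-$\Delta P$ summand of $\Res_{\Delta P}(N)$ requires the untwisting step: one uses $A\cong{}_\varphi A$ as $\OP$-$A$-bimodule and $B\cong B_\psi$ as $B$-$\OP$-bimodule (Proposition~\ref{almostsourcealgebrabimodfusion}) together with the $\CF$-stability of $V$ to replace $P'$ by $\Delta P$ and $V'$ by $V$. The paper's proof of (iii)$\Rightarrow$(i) spends almost all of its length on exactly this, via Green's indecomposability theorem (to see that $\Ind^{P\times P}_{P'}(V')$ is a summand of $iMj$), the bimodule structure of $A$ and $B$ (to see that $P'$ is twisted-diagonal), and the stability of $V$. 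Your proposal flags this as ``the main obstacle'' but then treats it as automatic; as written there is a genuine gap here.

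A secondary, smaller issue is in the reverse implication of your third paragraph: you claim $N\tenOP N^*=\End_{\OP^\op}(N)$ is an ``$\CO\Delta P$-direct summand'' of $\End_\CO(N)$. That $\End_{\OP^\op}(N)$ is a $\Delta P$-stable \emph{subalgebra} of $\End_\CO(N)$ is clear, but directness as a $\Delta P$-module summand is not, and it is what you need to deduce vanishing of the Brauer quotient of the sub from vanishing on the ambient module. The alternative you already sketch -- invoking Lemma~\ref{UUstarsummands} to see that $N\tenOP N^*$ is a $p$-permutation $\CO(P\times P)$-module whose summands with nonzero Brauer quotient at $\Delta Q$ come from $A\tenOQ A$ -- is the cleaner way to close this, and in any case once you have repaired the untwisting step you can simply argue (iii)$\Rightarrow$(i) exactly as the paper does (produce an $\OP$-summand of $N\tenOP N^*$ and apply Proposition~\ref{AtoMMdualsplits}) without introducing the auxiliary condition $N(\Delta P)\neq\{0\}$ at all.
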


\begin{proof}
Set $A=$ $i\OG i$ and $B=$ $j\OH j$.
The equivalence of (i) and (ii) is a reformulation of
\ref{BtoMdualMsplits} at the level of block algebras, via the
standard Morita equivalences between block algebras and almost
source  algebras. The bimodule $M$ has a vertex $\Delta Q$ contained in
$\Delta P$, for some subgroup $Q$ of $P$. If this vertex is
smaller than $\Delta P$, then 
$(M\tenOHc M^*)(\Delta P)=$ $\{0\}$, so also $(iMj\tenB jM^*i)(\Delta P)=$
$\{0\}$. Thus \ref{AtoMMdualsplits} implies that $iMj\tenB jM^*i$ has
no summand isomorphic to $B$, hence $M\tenOHc M^*$ has no summand
isomorphic to $\OHc$. This shows that (ii) implies (iii). Suppose that
$\Delta P$ is a vertex of $M$. Then clearly $V$ is a source of $M$. 
By \ref{vertexsourcealgebra}, 
$M$ has a vertex source pair $(P',V')$ such that $P'\subseteq$ $P\times P$
and such that $V'$ is a direct summand of $iMj$ as an $\OP'$-module.
It follows that as an $\CO(P\times P)$-module,  $iMj$ has an indecomposable
direct summand $W$ with vertex $P'$ and source $V'$. Green's indecomposability
theorem implies that $W\cong$ $\Ind^{P\times P}_{P'}(V')$ is a summand
of $iMj$ as an $\OP$-$\OP$-bimodule, hence of
$A\tenOP\Ind^{P\times P}_{\Delta P}(V)\tenOP B$.
Using the bimodule structure of $A$ and $B$, it follows that
$P'$ is a `twisted' diagonal subgroup of the form
$\{(\varphi(u), \psi(u))\ |\ u\in P$ for some
$\varphi$, $\psi\in$ $\Aut_\CF(P)$. Since $A\cong$ ${_\varphi{A}}$ as
$\OP$-$A$-bimodules and $B\cong$ $B_\psi$ as $B$-$\OP$-bimodules,
it follows that $iMj$ has a direct summand isomorphic to
to $\Ind^{P\times P}_{\Delta P}(V')$, and then $V'\cong$ $V$ by the
stability of $V$.
But then $iMj\tenOP jM^*i$ has a summand isomorphic to
$\Ind_{\Delta P}^{P\times P}(V)\tenOP \Ind^{P\times P}_{\Delta P}(V^*)\cong$
$\Ind^{P\times P}_{\Delta P}(V\tenO V^*)$. Since $V\tenO V^*$ has a
trivial summand, it follows that $iMj\tenOP jM^*i$ has a summand isomorphic to
$\OP$, which implies that $(iMj\tenOP jM^*i)(\Delta P)\neq$ $\{0\}$.
Proposition \ref{AtoMMdualsplits} implies that $A$ is isomorphic to
a direct summand of $iMj\tenB jM^*i$, and hence $\OGb$ is isomorphic to
a direct summand of $M\tenOHc M^*$, completing the proof.
\end{proof}

%%%%%%%%%%%%%%%%%%%%%%%%%%%%%%%%%%%%%%%%%%%%%%%%%%%%%%%%%%%%%%%%%%%%%%%
\section{Proof of Theorem  \ref{endopermutationstableMorita} and of
Theorem \ref{blockendopermutationstableMorita}}

\begin{proof} [{Proof of Theorem \ref{endopermutationstableMorita}}]
We use the notation and hypotheses from Theorem
\ref{endopermutationstableMorita}.
Since $(M\tenB M^*)(\Delta P)\neq$ $\{0\}$, it follows from
\ref{AtoMMdualsplits} that $M\tenB M^*\cong$ $A\oplus X$ for some
$A$-$A$-bimodule $X$ with the property that every indecomposable
direct summand of $X$ is isomorphic to a direct summand of
$A\tenOQ A$ for some fully $\CF$-centralised subgroup $Q$ of $P$.
In what follows we use the canonical isomorphism $M\tenB M^*\cong$
$\End_B(M)$ and analogous versions. By \ref{EndBODeltaQ},
for any subgroup $Q$ of $P$ we have an injective algebra homomorphism
$$\End_{B^\op}(M)(\Delta Q) \to \End_{B(\Delta Q)^\op}(M_Q) $$
The left term is isomorphic to  $A(\Delta Q)\oplus X(\Delta Q)$. If $Q$
is nontrivial and fully $\CF$-centralised, then the right term is
isomorphic to $A(\Delta Q)$ by the assumptions on $M_Q$. This forces
$X(\Delta Q)=$ $\{0\}$ for any nontrivial fully $\CF$-centralised
subgroup $Q$ of $P$. It follows from \ref{UUstarsummands} that
$X$ is projective as an $A$-$A$-bimodule.
Similarly, \ref{BtoMdualMsplits} implies that $M^*\tenA M\cong$ $B\oplus Y$
for some $B$-$B$-bimodule $Y$, and the same argument with the roles of
$A$ and $B$ exchanged shows that $Y$ is projective.
\end{proof}

\begin{proof} [{Proof of Theorem \ref{blockendopermutationstableMorita}}]
By multiplying the involved bimodules with almost source idempotents, it
follows using the block algebra versions \ref{blockendoinducedsource}
and \ref{blockAtoMMdualsplits} of \ref{AtoMMdualsplits}
and of \ref{endoinducedsource}, respectively, 
that Theorem \ref{blockendopermutationstableMorita}
is equivalent to Theorem \ref{endopermutationstableMorita}.
\end{proof}

\begin{Remark} \label{endosplitRemark}
We sketch a proof of Theorem \ref{endopermutationstableMorita} under
the additional assumption that the endopermutation $\OP$-module $V$ 
has an $\CF$-stable $p$-permutation resolution $Y_V$
(cf. \cite[\S 7]{Ricksplendid}). That is, $Y_V$ is a bounded complex
of permutation $\OP$-modules such that the complex $Y_V\tenO Y_V^*$ is
split as a complex of $\OP$-modules with respect to the diagonal
action of $P$, and such that $Y_V\tenO Y_V^*$ has homology concentrated
in degree zero and isomorphic to $V\tenO V^*$. The $\CF$-stability means
that for any subgroup $Q$ of $P$ and any morphism $\varphi : Q\to$ $P$ in
$\CF$ the indecomposable summands of $\Res^P_Q(Y_V)$ and $\Res_{\varphi}(Y_V)$
with vertex $Q$ (as complexes) are isomorphic (this is slightly weaker than
the condition stated in \cite[Theorem 1.3]{LinTR}). The proof of
\cite[Theorem 1.3]{LinTR} yields an indecomposable direct summand $Y$ of
the complex $A\tenOP\Ind^{P\times P}_{\Delta P}(Y_V)\tenOP B$ such that
$Y\tenB Y^*$ is split with homology concentrated in degree zero isomorphic
to $M\tenB M^*$; similarly for $Y^*\tenA Y$. Note that $Y$ is splendid in
the sense of \cite[1.10]{LinTR} or \cite[1.1]{Lisplendid}.
It follows from \cite[\S 7.3]{Ricksplendid} that if $Q$ is a fully
$\CF$-centralised subgroup of $P$, then $Y(\Delta Q)$ is a bounded
complex of $A(\Delta Q)$-$B(\Delta Q)$-bimodules with homology concentrated
in a single degree and isomorphic to a bimodule $M_Q$ as in the
statement of the Theorem.  It follows from
\cite[Proposition 2.4]{LinTR} or \cite[Theorem 9.2]{Liperm} that
for any fully $\CF$-centralised subgroup $Q$ of $P$ we have
$(Y\tenB Y^*)(\Delta Q)\cong$ $Y(\Delta Q)\ten_{B(\Delta Q)} Y(\Delta Q)^*$
and this complex is again split with homology concentrated in degree
zero isomorphic to $M_Q\ten_{B(\Delta Q)} M_Q^*$. Thus if $M_Q$ induces
a Morita equivalence, then $A(\Delta Q)$ $M_Q\ten_{B(\Delta Q)} M_Q^*$. 
Therefore, if $M_Q$ induces a Morita equivalence for all nontrivial fully
$\CF$-centralised subgroups $Q$ of $P$, then $Y(\Delta Q)$ induces
in particular a derived equivalence for all such $Q$, and hence, by
a result of Rouquier (see \cite[Appendix]{Liperm} for a proof)
the complex $Y$ induces a stable equivalence. This implies that
$M$ induces a stable equivalence, providing thus an alternative
proof of Theorem \ref{endopermutationstableMorita}.
\end{Remark}

%%%%%%%%%%%%%%%%%%%%%%%%%%%%%%%%%%%%%%%%%%%%%%%%%%%%%%%%%%%%%%%%%%%%%%%%%%%%
\section{Appendix}

In the proof of Proposition \ref{AtoMMdualsplits} we have made use of 
\cite[Proposition 3.8]{PuigScopes}. The purpose of this section is to
give a proof of a slightly more general result in this direction.
We use without further comment the following standard properties of 
$p$-permutation modules: if $U$ is an indecomposable $\OG$-module with 
vertex $P$ and trivial source, then the $kN_G(P)$-module $U(P)$ is the 
Green correspondent of $k\tenO U$, and we have a canonical algebra
isomorphism $(\End_\CO(U))(P)\cong$ $\End_k(U(P))$. Moreover, as a 
$kN_G(P)/P$-module, $U(P)$ is the multiplicity module of $U$; in 
particular, $U(P)$ is projective indecomposable as a $kN_G(P)/P$-module. 
Any $p$-permutation $kG$-module lifts uniquely, up to isomorphism, to a 
$p$-permutation $\OG$-module. In particular, the isomorphism class of an 
indecomposable $\OG$-module $U$ with vertex $P$ and trivial source is 
uniquely determined by the isomorphism class of the projective 
indecomposable $kN_G(P)/P$-module $U(P)$. See e. g. \cite[\S 27]{Thev}
for an expository account on $p$-permutation modules with further references.
The following result is well-known (we include a proof for the convenience
of the reader):

\begin{Proposition} \label{splitinjective1}
Let $G$ be a finite group, $P$ a $p$-subgroup, $U$ an indecomposable
$\OG$-module with vertex $P$ and trivial source $\CO$, and let $M$ be an
$\OG$-module such that $\Res^G_P(M)$ is a permutation $\OP$-module.
Set $N=$ $N_G(P)/P$. Let $\alpha : U\to$ $M$ be a
homomorphism of $\OG$-modules. The following are equivalent.

\smallskip\noindent (i)
The $\OG$-homomorphism $\alpha : U\to$ $M$ is split injective.

\smallskip\noindent (ii)
The $kN$-homomorphism $\alpha(P) : U(P)\to$ $M(P)$ is injective.
\end{Proposition}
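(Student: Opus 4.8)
The plan is to prove the implication (i) $\Rightarrow$ (ii) directly, and for the converse to reduce to the case where $M$ is indecomposable with vertex $P$ and trivial source, where the statement becomes a comparison of multiplicity modules via the Green correspondence.

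First I would dispose of (i) $\Rightarrow$ (ii). If $\alpha : U \to M$ is split injective, then applying the Brauer construction $\Br_P$ (which is additive and compatible with direct sums) to a retraction $\rho : M \to U$ with $\rho\alpha = \Id_U$ shows that $\rho(P)\alpha(P) = \Id_{U(P)}$, so $\alpha(P)$ is split injective, in particular injective. This direction uses nothing about the hypotheses beyond functoriality of $\Br_P$.

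For (ii) $\Rightarrow$ (i), the key observation is that since $\Res^G_P(M)$ is a permutation $\OP$-module, every indecomposable direct summand of $M$ has trivial source, and those with vertex exactly $P$ are precisely the ones that survive (nonzero image) under $\Br_P$; summands with vertex strictly below $P$ (or not containing a conjugate of $P$) contribute zero to $M(P)$. Decompose $M = M_P \oplus M'$ where $M_P$ is the sum of the indecomposable summands with vertex containing a $G$-conjugate of $P$ and trivial source, and $M'$ has no such summand, so $M'(P) = \{0\}$ and hence $M(P) = M_P(P)$. Since $U$ has vertex $P$ and trivial source, $\Hom_{\OG}(U, M') $ consists of non-split maps (any summand of the image would be a trivial-source module with vertex containing $P$, forcing a corresponding summand of $M'$), and more to the point the composite $U \to M \to M'$ becomes zero after applying $\Br_P$ automatically; the content is that $\alpha$ is split injective if and only if its component $U \to M_P$ is split injective. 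Now both $U$ and the indecomposable summands of $M_P$ are trivial-source modules with vertex $P$, so by the standard theory recalled just before the Proposition their isomorphism classes are detected by the projective indecomposable $kN$-modules $U(P)$ and $M_P(P) = M(P)$, and $\Br_P$ induces an isomorphism $\Hom_{\OG}(U, M_P) \to \Hom_{kN}(U(P), M(P))$ on the relevant Hom-spaces (or at least a bijection on the split-injective maps). Concretely: $\alpha(P)$ injective means $U(P)$ is isomorphic to a direct summand of $M(P)$ as $kN$-modules via $\alpha(P)$ (a monomorphism of projective — hence also injective — $kN$-modules splits), and lifting this splitting back through the Green correspondence / uniqueness of trivial-source lifts yields a splitting of $\alpha$ itself.

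The main obstacle I anticipate is making precise the passage from ``$\alpha(P)$ splits as a $kN$-map'' back to ``$\alpha$ splits as an $\OG$-map'' — i.e.\ establishing that $\Br_P$ is not merely injective but effectively an isomorphism on the piece of $\Hom_{\OG}(U,M)$ that matters, so that a chosen $kN$-retraction of $\alpha(P)$ lifts. I would handle this by writing $U(P) = eS$ and $M(P) = fT$ with $e, f$ idempotents picking out the relevant projective covers over $kN$, using that $\Br_P$ is surjective with kernel a nilpotent ideal on the endomorphism algebras $(\End_\CO(U))(P) \cong \End_k(U(P))$ and $(\End_\CO(M_P))(P) \cong \End_k(M_P(P))$ (both recalled in the text), and then lifting idempotents / splittings through this surjection with nilpotent kernel. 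This is where one must be slightly careful that the ambient ring in which we lift is $\End_{\OG}(U \oplus M_P)$ rather than a group algebra; but since $U \oplus M_P$ is a $p$-permutation module, $\Br_P$ applied to its endomorphism algebra is again surjective with nilpotent kernel, and the usual idempotent-lifting argument closes the proof.
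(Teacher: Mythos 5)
Your high-level plan is close to the paper's, but two of the steps you rely on are not correct as stated. The first is the decomposition $M = M_P\oplus M'$ with $M'(P)=\{0\}$, which you justify by asserting that ``since $\Res^G_P(M)$ is a permutation $\OP$-module, every indecomposable direct summand of $M$ has trivial source.'' This fails when $P$ is not a Sylow $p$-subgroup of $G$, and the hypotheses here allow an arbitrary $p$-subgroup $P$ -- indeed the application in the Appendix takes the group $G\times G$ and the subgroup $\Delta P$. For instance with $G=C_4$, $p=2$, $P=C_2$, $k=\F_2$, the module $W=\Omega^{-1}(k)$ is indecomposable of dimension $3$, is not a permutation $kG$-module (so has nontrivial source), has vertex $G\supseteq P$, yet $\Res^G_P(W)\cong kP\oplus k$ is permutation and $W(P)\neq\{0\}$. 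Such a summand would land in your $M'$ but does not vanish under the Brauer construction, so you cannot conclude $M(P)=M_P(P)$ and the reduction to $\alpha_P\colon U\to M_P$ breaks down.

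The second problem -- the one you flag yourself as the main obstacle -- is the idempotent-lifting step. The assertion that ``$\Br_P$ applied to $\End_{\OG}(U\oplus M_P)$ is surjective with nilpotent kernel'' is not correct: $\Br_{\Delta P}$ is surjective from $(\End_\CO(W))^{\Delta P}$ onto $\End_k(W(P))$ when $\Res^G_P(W)$ is permutation, but its restriction to $G$-fixed points is in general neither surjective onto $\End_{kN}(W(P))$ nor does it have nilpotent kernel on $\End_{\OG}(W)$. What is true, and what the paper's proof actually invokes (citing Th\'evenaz~(27.5)), is that the Brauer map restricts to a surjection between relative trace subalgebras $\End_\CO(W)_P^G\twoheadrightarrow\End_k(W(P))_1^N$; and the projection onto the summand $U(P)$ does lie in the target $\End_k(W(P))_1^N$ precisely because $U(P)$ is projective over $kN$. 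The paper also avoids reasoning inside the whole of $\End_{\OG}(U\oplus M)$: it uses the simplicity of $\soc(U(P))$ to isolate a single indecomposable summand $M'$ of $M$ with $\beta(P)\colon U(P)\hookrightarrow M'(P)$ split, applies the relative-trace surjection inside $\End_\CO(M')$ alone to lift the projection idempotent (which, by indecomposability, must be $\Id_{M'}$, forcing $M'$ to have vertex $P$ and trivial source), deduces $M'(P)\cong U(P)$ and hence $M'\cong U$ by Green correspondence, and finally uses that $\End_{\OG}(U)$ is local to promote $\beta$ to an isomorphism. Replacing your two claims -- the clean $M_P/M'$ splitting and the surjective-with-nilpotent-kernel assertion -- by this selection of a single summand plus the relative-trace surjection is exactly what is needed to close the gaps; as written, your outline does not yield a complete proof.
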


\begin{proof}
The implication (i) $\Rightarrow$ (ii) is trivial. Suppose that (ii) 
holds. Then $\alpha(P) : U(P)\to$ $M(P)$ is split injective
as a $kN$-homomorphism because $U(P)$ is projective, hence injective,
as a $kN$-module. Using that $\soc(U(P))$ is simple it follows that
$M$ has an indecomposable direct summand 
$M'$ such that the induced map $\beta(P) : U(P)\to$ $M'(P)$ is still
split injective, where $\beta$ is the composition of $\alpha$ 
followed by the projection from $M$ onto $M'$. The Brauer homomorphism
applied to the algebra $\End_\CO(M')$ maps $\End_\CO(M')_P^G$ onto
$(\End_k(M'))(P)_1^N\cong$ $\End_k(M'(P))_1^N$ (cf. \cite[(27.5)]{Thev}). 
The summand of $M'(P)$ isomorphic to $U(P)$ corresponds to a primitive 
idempotent in  $\End_k(M'(P))_1^N$, hence lifts to a primitive idempotent in
$\End_\CO(M')^G_P$. Since $M'$ is indecomposable, this idempotent
is $\Id_{M'}$, and hence, by Higman's criterion, $M'$ has $P$ as
a vertex. But then $M'$ has a trivial source, and so $M'(P)$ is
indecomposable as a $kN$-module, hence isomorphic to $U(P)$.
By the Green correspondence this implies $U\cong$ $M'$. Composing
$\beta$ with the inverse of this isomorphism yields an endomorphism
$\gamma$ of $U$ which induces an automorphism on $U(P)$. Since 
$\End_\OG(U)$ is local, this implies that $\gamma$ is an automorphism
of $U$, and hence that $\beta : U\to$ $M'$ is an isomorphism. It
follows that $\alpha$ is split injective, whence the
implication (ii) $\Rightarrow$ (i).
\end{proof}

\begin{Proposition} \label{splitinjective2}
Let $G$ be a finite group, $P$ a $p$-subgroup, $U$ an indecomposable
$\OG$-module with vertex $P$ and trivial source, and let $M$ be an
$\OG$-module such that $\Res^G_P(M)$ is a permutation $\OP$-module.
Set $N=$ $N_G(P)/P$. 
Suppose that $N$ has a normal $p$-subgroup $Z$
such that the $kN$-module $k\ten_{kZ} U(P)$ is simple and such that
$M(P)$ is projective as a $kZ$-module. Let $\alpha : U\to$ $M$ be a
homomorphism of $\OG$-modules. The following are equivalent.

\smallskip\noindent (i)
The $\OG$-homomorphism $\alpha : U\to$ $M$ is split injective.

\smallskip\noindent (ii)
There is a nonzero direct summand $W$ of $\Res^N_Z(U(P))$ 
such that the $kZ$-homomorphism  $\alpha(P)|_W : W\to$ 
$\Res^N_Z(M(P))$ is injective.
\end{Proposition}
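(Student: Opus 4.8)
The plan is to prove Proposition~\ref{splitinjective2} by reducing it to Proposition~\ref{splitinjective1}. The implication (i) $\Rightarrow$ (ii) is almost trivial: if $\alpha$ is split injective then so is $\alpha(P)$ on all of $U(P)$, and hence its restriction to any direct summand $W$ of $\Res^N_Z(U(P))$ is injective; one just needs a nonzero such $W$, which exists since $U(P)\neq\{0\}$. The content is the converse, and the key observation is that by Proposition~\ref{splitinjective1} it suffices to show that the $kN$-homomorphism $\alpha(P) : U(P)\to M(P)$ is injective. So the whole problem becomes: given a nonzero summand $W$ of $\Res^N_Z(U(P))$ on which $\alpha(P)$ is injective, deduce that $\alpha(P)$ is injective on all of $U(P)$.

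First I would exploit the hypothesis that $k\ten_{kZ}U(P)$ is simple as a $kN$-module. Since $Z$ is a normal $p$-subgroup of $N$, it acts trivially on the head and socle of any $kN$-module, so $U(P)/\mathrm{rad}(\mathfrak{}kZ)U(P) = k\ten_{kZ}U(P)$ being simple means $U(P)$ is in fact indecomposable with simple head over $kN$ (which we already knew, $U(P)$ being a projective indecomposable $kN$-module), but more usefully it pins down the isomorphism type of the head. Next, the hypothesis that $M(P)$ is projective as a $kZ$-module lets me analyse $\ker(\alpha(P))$: it is a $kN$-submodule of $U(P)$, hence, if nonzero, it meets the socle $\mathrm{soc}(U(P))$, which is simple. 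The strategy is to show the kernel must be zero by a rank/length comparison carried out over $kZ$. Because $M(P)$ is $kZ$-projective and $\alpha(P)|_W$ is injective for the chosen summand $W$, the image $\alpha(P)(W)$ is a $kZ$-submodule of a projective $kZ$-module isomorphic to $W$; I would then want to promote this to a statement over $kN$ using that the $kN$-structure on $U(P)$ is controlled by $Z$ via the simplicity of $k\ten_{kZ}U(P)$ — concretely, $\mathrm{End}_{kN}(U(P))$ is local, and any $kN$-endomorphism of $U(P)$ that is injective on $W$ (a summand generating $U(P)$ as a $kN$-module, since $k\ten_{kZ}U(P)$ is simple) must be injective.

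The main obstacle I anticipate is exactly the passage from ``injective on the $kZ$-summand $W$'' to ``injective on the whole $kN$-module $U(P)$''. One has to be careful that $W$ need not generate $U(P)$ as a $kZ$-module, only the quotient $k\ten_{kZ}U(P)$ is simple; the right tool is Nakayama's lemma applied to $kN$, noting that $Z$ normal in $N$ forces $\mathrm{rad}(kN)\cap kZ \cdot U(P)$ and the image of $W$ in $k\ten_{kZ}U(P)$ to interact so that $kN\cdot W = U(P)$ as soon as the image of $W$ in the simple module $k\ten_{kZ}U(P)$ is nonzero. Once $U(P) = kN\cdot W$ and $\alpha(P)$ is $kN$-linear and injective on $W$, the kernel $\ker(\alpha(P))$ is a $kN$-submodule meeting $W$ trivially; combined with the fact that $\ker(\alpha(P))$, if nonzero, contains the unique minimal $kN$-submodule $\mathrm{soc}(U(P))$, one derives $\mathrm{soc}(U(P))\cap W = \{0\}$. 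To finish I would argue that $W$, being a nonzero $kZ$-summand of $U(P)$ with $U(P)$ having simple socle on which $Z$ acts trivially, must itself contain that socle (any nonzero $kZ$-submodule of $U(P)$ contains the $Z$-fixed points $\mathrm{soc}(U(P))$ since $Z$ is a $p$-group), a contradiction. Hence $\ker(\alpha(P)) = \{0\}$, $\alpha(P)$ is injective, and Proposition~\ref{splitinjective1} gives (i).

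I would present the write-up in this order: first dispose of (i) $\Rightarrow$ (ii); then set up the reduction via Proposition~\ref{splitinjective1} to proving injectivity of $\alpha(P)$; then the two lemmas-in-line, namely $kN\cdot W = U(P)$ (via Nakayama and the simplicity of $k\ten_{kZ}U(P)$, using $Z\trianglelefteq N$) and ``any nonzero $kZ$-submodule of $U(P)$ contains $\mathrm{soc}_{kN}(U(P))$'' (via $Z$ being a normal $p$-group so that $U(P)^Z\neq\{0\}$ and $\mathrm{soc}_{kN}(U(P)) = \mathrm{soc}_{kZ}(U(P))^{N} $ is the unique minimal one); and finally the kernel argument. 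The hypothesis that $M(P)$ is $kZ$-projective is used only lightly — to guarantee that the chosen $W$ and its image behave well and, more importantly, it is the natural hypothesis under which the conclusion can hold at all, since without it injectivity on a single $kZ$-summand of the socle-generating part would not propagate.
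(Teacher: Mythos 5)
Your overall strategy matches the paper's: reduce to showing $\alpha(P)$ is injective so that Proposition~\ref{splitinjective1} applies, identify $U(P)^Z$ with $\soc_{kN}(U(P))$ via the $kZ$-freeness of $U(P)$ and the simplicity of $k\otimes_{kZ}U(P)$, and then use (ii) to keep $\ker\alpha(P)$ from containing that socle. The pivotal step, however, is stated in a form that is false. You claim that any nonzero $kZ$-direct summand $W$ of $U(P)$ must \emph{contain} $\soc_{kN}(U(P))=U(P)^Z$. Decompose $\Res^N_Z(U(P))$ as a free $kZ$-module, say $U(P)=W\oplus W'$; then $U(P)^Z=W^Z\oplus (W')^Z$, and $(W')^Z\neq\{0\}$ whenever $W'\neq\{0\}$, so $W\supseteq U(P)^Z$ forces $W'=\{0\}$, i.e.\ $W=U(P)$. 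For a proper summand the containment simply fails. The fact you should extract from ``$Z$ is a $p$-group'' is weaker and is exactly what the paper uses: $W$, being a nonzero direct summand of the free $kZ$-module $\Res^N_Z(U(P))$, is itself free and nonzero, hence $W^Z\neq\{0\}$; moreover $W^Z\subseteq U(P)^Z=\soc_{kN}(U(P))$. That is enough for the contradiction: if $\ker\alpha(P)\neq\{0\}$ then, as a $kN$-submodule, it contains the simple socle $U(P)^Z\supseteq W^Z$, whereas injectivity of $\alpha(P)|_W$ gives $W^Z\cap\ker\alpha(P)=\{0\}$, contradicting $W^Z\neq\{0\}$. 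Once this is repaired, the auxiliary Nakayama-type claim $kN\cdot W=U(P)$ that you set up as a key lemma is never needed and can be dropped.
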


\begin{proof}
The implications (i) $\Rightarrow$ (ii) is trivial.  Suppose that (ii) 
holds. Since $U(P)$ is projective  indecomposable as a $kN$-module, it 
has a simple socle and a simple top, 
and these are isomorphic. The restriction of $U(P)$ to $kZ$ remains 
projective, and hence the module $k\ten_{kZ} U(P)$ has the same dimension 
as the submodule $U(P)^{Z}$ of $Z$-fixed points in $U(P)$. Since $Z$ is 
normal in $N$, it follows that $U(P)^{Z}$ is a $kN$-submodule of $U(P)$,
hence that $U(P)^Z$ contains the simple socle of $U(P)$.
Since $k\ten_{kZ} U(P)$ is assumed to be simple, hence isomorphic to the
top and bottom composition factor of $U(P)$, it follows that $U(P)^Z$ 
is equal to the socle $\soc(U(P))$ of $U(P)$ as a $kN$-module. By the 
assumption (ii), the kernel of the map $U(P)\to$ $M(P)$
does not contain $U(P)^Z$. Since the socle of $U(P)$ as a $kN$-module
is simple, this implies that $\alpha(P) : U(P)\to$ $M(P)$ is injective,
and hence that $\alpha$ is split injective by Proposition
\ref{splitinjective1}. This shows the implication (ii) $\Rightarrow$ (i). 
\end{proof}

\begin{Corollary}[{Puig \cite[Proposition 3.8]{PuigScopes}}]
Let $G$ be a finite group, $b$ a block of $\OG$, $P$ a defect group
of $b$, and $A$ an interior $G$-algebra. Suppose that the conjugation
action of $P$ on $A$ stabilises an $\CO$-basis of $A$, and that
$\Br_{\Delta P}(b) \cdot A(\Delta P)\cdot \Br_{\Delta P}(b)$ is
projective as a left or right $kZ(P)$-module. Then
the map $\alpha : \OGb\to$ $A$ induced by the structural homomorphism
$G\to$ $A^\times$ is split injective as a homomorphism of
$\OGb$-$\OGb$-bimodules.
\end{Corollary}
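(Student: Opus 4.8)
The plan is to apply Proposition \ref{splitinjective2} with the group $G\times G$ in place of $G$, the $p$-subgroup $\Delta P$ in place of $P$, the $\CO(G\times G)$-module $U:=\OGb$ (with $(x,y)\in G\times G$ acting by $a\mapsto xay^{-1}$) as the trivial-source module, and the $\CO(G\times G)$-module $M:=A$, on which $G\times G$ acts through the structural homomorphism $\OG\to A$. First I would record the standard facts: $\OGb$ is indecomposable as an $\OGb$-$\OGb$-bimodule, since $b$ is a primitive idempotent in $Z(\OG)$, and it has vertex $\Delta P$ and trivial source $\CO$, since $P$ is a defect group of $b$. The hypothesis that conjugation by $P$ stabilises an $\CO$-basis of $A$ says exactly that $\Res^{G\times G}_{\Delta P}(M)$ is a permutation $\CO\Delta P$-module. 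Writing $b_A$ for the image of $b$ in $A$, the element $b_A$ is the identity of $A$ as an $\OGb$-$\OGb$-bimodule, so $M=b_AAb_A$, hence $M(\Delta P)=\Br_{\Delta P}(b)A(\Delta P)\Br_{\Delta P}(b)$ in the notation of the statement, and the map $\alpha(\Delta P):U(\Delta P)\to M(\Delta P)$ induced on Brauer quotients is an algebra homomorphism sending the idempotent $\Br_{\Delta P}(b)$ to the nonzero idempotent $\Br_{\Delta P}(b_A)$.

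Next I would identify the normal $p$-subgroup $Z$ of $N:=N_{G\times G}(\Delta P)/\Delta P$ needed in Proposition \ref{splitinjective2}. One computes $N_{G\times G}(\Delta P)=\{(x,y)\in N_G(P)\times N_G(P)\ |\ xC_G(P)=yC_G(P)\}$. Since $N_G(P)$ normalises $Z(P)$, the subgroup $Z(P)\times 1$ is normal in $N_{G\times G}(\Delta P)$, and $(Z(P)\times 1)\cap\Delta P=1$, so its image $Z$ in $N$ is a normal $p$-subgroup isomorphic to $Z(P)$; likewise for $1\times Z(P)$. According as the hypothesis asserts projectivity of $\Br_{\Delta P}(b)A(\Delta P)\Br_{\Delta P}(b)$ as a left or as a right $kZ(P)$-module, we take $Z$ to be the image of $Z(P)\times 1$, respectively of $1\times Z(P)$; the right-module case being entirely analogous, assume the former. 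The action of this $Z$ on both $M(\Delta P)$ and $U(\Delta P)$ is left multiplication through the inclusion $Z(P)\le C_G(P)$, so the hypothesis is precisely the statement that $M(P)=M(\Delta P)$ is projective as a $kZ$-module.

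It then remains to verify condition (ii) of Proposition \ref{splitinjective2} for $\alpha$ and the hypothesis that $k\ten_{kZ}U(\Delta P)$ is a simple $kN$-module. For (ii), the idempotent $\Br_{\Delta P}(b)$ generates a free rank-one $kZ$-submodule $W:=kZ(P)\Br_{\Delta P}(b)$ of $\Res^N_Z(U(\Delta P))$, which is a direct summand since its generator is an idempotent not lying in the radical; and $\alpha(\Delta P)$ maps $W$ isomorphically onto $kZ(P)\Br_{\Delta P}(b_A)\subseteq M(\Delta P)$, being an algebra homomorphism that carries $\Br_{\Delta P}(b)$ to the nonzero idempotent $\Br_{\Delta P}(b_A)$ and intertwines the left $Z(P)$-actions, so (ii) holds. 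For the simplicity of $k\ten_{kZ}U(\Delta P)$ one uses that $U(\Delta P)=\Br_{\Delta P}(\OGb)=\Br_{\Delta P}(b)kC_G(P)$ is the direct sum of the block algebras $kC_G(P)e_i$ of $kC_G(P)$ with $\Br_{\Delta P}(b)e_i\ne 0$; each such pair $(P,e_i)$ is a maximal $b$-Brauer pair, so $e_i$ has the central defect group $Z(P)$ of $C_G(P)$ and $kC_G(P)e_i\cong M_{n_i}(kZ(P))$, whence $k\ten_{kZ(P)}(kC_G(P)e_i)\cong M_{n_i}(k)$ is the unique simple quotient of the block algebra; the conjugation action of $N_G(P)$ permutes the $e_i$ transitively, because all maximal $b$-Brauer pairs are $G$-conjugate, and combining this with the indecomposability of $\OGb$ as a bimodule one concludes that $k\ten_{kZ}U(\Delta P)$ is a single simple $kN$-module. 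Proposition \ref{splitinjective2} then yields that $\alpha:\OGb\to A$ is split injective as a homomorphism of $\OGb$-$\OGb$-bimodules.

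The main obstacle I expect is the last point, the simplicity of $k\ten_{kZ}U(\Delta P)$ as a $kN$-module — equivalently the identity $\rad(U(\Delta P))=I_Z\cdot U(\Delta P)$ for the augmentation ideal $I_Z$ of $kZ$ — since this is the one step that uses genuine block theory (the structure of $kC_G(P)e_P$ as a matrix algebra over its central defect group $Z(P)$, together with transitivity of the $G$-action on maximal Brauer pairs) rather than formal manipulation; the remaining steps are bookkeeping around the passage to $G\times G$.
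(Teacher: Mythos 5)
Your overall strategy agrees with the paper's own proof: cut $A$ by the image of $b$, apply Proposition \ref{splitinjective2} to $G\times G$ with $U=\OGb$, $M=A$, $N=N_{G\times G}(\Delta P)/\Delta P$, and $Z$ the image of $Z(P)\times 1$, verify that $k\ten_{kZ}U(\Delta P)$ is simple via the structure of blocks of $kC_G(P)$ with central defect group $Z(P)$ and transitivity of $N_G(P)$ on the blocks occurring in $\Br_{\Delta P}(b)$, and exhibit a free rank-one $kZ$-summand of $U(\Delta P)$ that $\alpha(\Delta P)$ maps injectively into $M(\Delta P)$. The simplicity argument and the identification of the relevant groups are fine and coincide with the paper's.

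There is, however, a genuine gap in your verification of condition (ii). You claim that $\alpha(\Delta P)$ maps $W=kZ(P)\Br_{\Delta P}(b)$ \emph{isomorphically} onto $kZ(P)\Br_{\Delta P}(b_A)$, and justify this only by saying $\alpha(\Delta P)$ is an algebra homomorphism carrying $\Br_{\Delta P}(b)$ to the nonzero idempotent $\Br_{\Delta P}(b_A)$ and intertwining the $Z(P)$-actions. That reasoning establishes surjectivity onto $kZ(P)\Br_{\Delta P}(b_A)$, but it does \emph{not} establish injectivity on $W$: a $kZ(P)$-linear map from the local ring $kZ(P)\cong W$ could kill $\soc(kZ(P))$ while sending the unit to a nonzero idempotent. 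Injectivity is exactly the point at which the distinctive hypothesis of the Corollary — that $\Br_{\Delta P}(b)\cdot A(\Delta P)\cdot\Br_{\Delta P}(b)$ is projective as a $kZ(P)$-module — must enter, and your argument as written never actually uses it at this step. The fix is the one the paper makes: since $M(\Delta P)$ is projective, hence free, over the local ring $kZ(P)$, and its identity element $\Br_{\Delta P}(b_A)$ is nonzero, Nakayama's lemma shows $\Br_{\Delta P}(b_A)\notin\rad(kZ(P))\cdot M(\Delta P)$, so $z\mapsto z\Br_{\Delta P}(b_A)$ is a split injection $kZ(P)\hookrightarrow M(\Delta P)$, and $\alpha(\Delta P)|_W$ is injective. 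Without this you have not verified condition (ii), so this is a required insertion rather than a cosmetic one.

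Two smaller remarks. Your claim that $W$ is a free rank-one direct summand of $\Res^N_Z(U(\Delta P))$ is correct but could use a word of justification: $kC_G(P)\Br_{\Delta P}(b)$ is free over $kZ(P)$, and $\Br_{\Delta P}(b)$ has nonzero image in $kC_G(P)/Z(P)$, hence does not lie in $\rad(kZ(P))\cdot kC_G(P)\Br_{\Delta P}(b)$; the phrase \emph{idempotent not lying in the radical} gestures at this but does not quite say it. Finally, you implicitly read the hypothesis as \emph{nonzero} projective (otherwise $\Br_{\Delta P}(b_A)=0$ and the conclusion would fail); the paper does the same in its application, so this is not a defect of your argument, but it is worth making explicit.
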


\begin{proof}
After replacing $A$ by $b\cdot A\cdot b$ we may assume that 
$A(\Delta P)$ is projective as a left or right $kZ(P)$-module.
As an $\CO(G\times G)$-module, $\OGb$ has vertex $\Delta P$ and trivial
source. By the assumptions, $A$ is a permutation $\CO\Delta P$-module.
We have $N_{G\times G}(\Delta P)=$ 
$(C_G(P)\times C_G(P))\cdot N_{\Delta G}(\Delta P)$. Set $N=$
$N_{G\times G}(\Delta P)/\Delta P$. Denote by $Z$ the image of
$Z(P)\times \{1\}$ in $N$; this is equal to the image of $\{1\}\times Z(P)$,
normal in $N$, and canonically isomorphic to $Z(P)$. Consider the
induced map $\alpha(\Delta P) : kC_G(P)\Br_{\Delta P}(b)\to$ $A(\Delta P)$.
If $e$ is a block of $kC_G(P)$ occurring in $\Br_{\Delta P}(b)$,
then $kC_G(P)/Z(P)\bar e$ is a matrix algebra, where $\bar e$ is the
canonical image of $e$ in $kC_G(P)/Z(P)$. (We use here again our assumption that
$k$ is large enough.) Thus $kC_G(P)/Z(P)\bar e$ is
simple as a module over $k(C_G(P)\times C_G(P))$. Since the blocks $e$
arising in this way are permuted transitively by $N_G(P)$, it follows 
that $k\ten_{kZ} kC_G(P)\Br_{\Delta P}(b)\cong$ $kC_G(P)/Z(P)c$ is a 
simple $kN$-module, where $c$ is the image of $\Br_{\Delta P}(b)$ in 
$kC_G(P)/Z(P)$, or equivalently, $c$ is the sum of the $\bar e$ as above.
By the assumptions, $A(\Delta P)$ is projective as a
$kZ$-module, and hence the obvious composition of algebra
homomorphisms $kZ(P)\to$ $kC_G(P)\Br_{\Delta P}\to$ $A(\Delta P)$ is
injective. 
Thus $kC_G(P)\Br_{\Delta P}(b)$ has a summand isomorphic to $kZ$, as
a $kZ$-module, which is mapped injectively into $A(\Delta P)$ by
$\alpha(\Delta P)$. The result follows from the implication
(ii) $\Rightarrow$ (i) in Proposition \ref{splitinjective2}.
\end{proof}

\end{document}